\theoremstyle{plain}
\newtheorem{theorem}{Theorem}[section]
\newtheorem{lemma}[theorem]{Lemma}
\newtheorem{newlemma}{Lemma}[section]
\newtheorem{proposition}[theorem]{Proposition}
\newtheorem{conjecture}[theorem]{Conjecture}
\theoremstyle{definition}
\newtheorem{assumption}{Assumption}[section]
\newtheorem{definition}[theorem]{Definition}
\newtheorem{example}[theorem]{Example}
\newtheorem{remark}[theorem]{Remark}
\newtheorem{algorithm}[theorem]{Algorithm}
\newcommand{\exclude}[1]{}
\newcommand{\opA}{\cA}
\newcommand{\opB}{\cB}
\newcommand{\param}{\mathscr T}
\newcommand{\paraminv}{\mathscr S}
\newcommand{\proj}{p}
\newcommand{\tang}{\tau}
\newcommand{\mm}{\cI_\xi}
\newcommand{\vv}{b}
\newcommand{\thp}{\vartheta}
\newcommand{\err}{{\operatorname{err}}}
\newcommand{\id}{{\operatorname{id}_{\R^d}}}
\newcommand{\idz}{{\operatorname{id}_{\R^2}}}
\newcommand{\idV}{{\operatorname{id}_{V}}}
\newcommand{\idt}{{\operatorname{id}_{\tang(\xi)}}}
\newcommand{\reach}{{\operatorname{reach}}}
\newcommand{\1}{{\mathbf{1}}}
\newcommand{\E}{{\mathbb{E}}}
\newcommand{\N}{{\mathbb{N}}}
\renewcommand{\P}{{\mathbb{P}}}
\newcommand{\R}{{\mathbb{R}}}
\newcommand{\sign}{\operatorname{sign}}
\newcommand{\subtheta}{B}
\definecolor{darkgreen}{rgb}{0,0.5,0}
\definecolor{middlegreen}{rgb}{0,0.7,0}
\definecolor{magenta}{rgb}{0.75,0,0.25}
\definecolor{violet}{rgb}{0.25,0,0.75}
\newcommand{\hypsurf}{\Theta}
\newcommand{\tr}{\operatorname{tr}}
\renewcommand{\P}{{\mathbb P}}
\newcommand{\cA}{{\cal A}}
\newcommand{\cB}{{\cal B}}
\newcommand{\cF}{{\cal F}}
\newcommand{\cI}{{\cal I}}
\newcommand{\be}{\begin{equation}}
\newcommand{\ee}{\end{equation}}
\newcommand{\bea}{\begin{eqnarray}}
\newcommand{\eea}{\end{eqnarray}}
\newcommand{\beast}{\begin{eqnarray*}}
\newcommand{\eeast}{\end{eqnarray*}}
\newcommand{\bproof}{\begin{proof}}
\newcommand{\eproof}{\end{proof}}
\title{A Strong Order 1/2 Method for Multidimensional SDEs with 
Discontinuous Drift}
\author{Gunther Leobacher and Michaela Sz\"olgyenyi}
\begin{document}

\date{Corrected version, December 2018}

\maketitle


\begin{abstract}
In this paper we consider multidimensional
stochastic differential equations (SDEs) with discontinuous drift and possibly
degenerate diffusion coefficient. 
We prove an existence and uniqueness result for this class of SDEs
and
we present a numerical method that converges with strong order 1/2.
Our result is the first one that shows existence and uniqueness as well as strong convergence for such a general
class of SDEs.

The proof is based on a transformation technique that removes the discontinuity
from the drift such that the coefficients of the transformed SDE are Lipschitz
continuous.  Thus the Euler-Maruyama method can be applied to this transformed 
SDE. The approximation can be transformed back, giving an approximation to
the solution of the original SDE.

As an illustration, we apply our result to an SDE the drift of which has a discontinuity along the unit circle and we present an application from stochastic optimal control.\\
 
\noindent Keywords: stochastic differential equations, discontinuous drift, degenerate diffusion, existence and uniqueness of solutions, numerical methods for stochastic differential equations, strong convergence rate\\
Mathematics Subject Classification (2010): 60H10, 65C30, 65C20 (Primary), 65L20 (Secondary)
\end{abstract}


\section{Introduction}
\label{sec:Introduction}

We consider a $d$-dimensional time-homogeneous stochastic differential equation (SDE)
\begin{align}\label{eq:SDE}
dX &= \mu(X) \, dt + \sigma (X) dW\,, \qquad X_0=x\,,
 \end{align}
where $\mu:\R^d\longrightarrow \R^d$ and $\sigma:\R^d \longrightarrow
\R^{d\times d}$ are measurable functions and $W=(W_t)_{t\ge0}$ is a
$d$-dimensional standard Brownian motion on the filtered probability space
$(\Omega,\cF,(\cF_t)_{t\ge0},\P)$.  \\
 
If both $\mu$ and $\sigma$ are Lipschitz, then existence and uniqueness is
guaranteed by Picard iteration.
Furthermore, \eqref{eq:SDE} can be solved numerically with, e.g., the Euler-Maruyama method, which then converges
with strong order
1/2, see \cite[Theorem 10.2.2]{kloeden1992}.\\

However, in applications one is frequently confronted with SDEs where $\mu$ is non-Lipschitz, e.g., in stochastic control theory. There, whenever an optimal control of bang-bang type appears, meaning that the strategy is of the form 
$\1_S(X)$ for some measurable set $S\subseteq \R^d$, the drift of the controlled underlying system is discontinuous. Furthermore, for example in setups with incomplete information, which are currently heavily under study, e.g., for applications in mathematical finance, the underlying systems have degenerate diffusion coefficients.
Therefore, the class of SDEs that we study in this paper appears frequently in applied mathematics and we shall elaborate our contributions to this kind of problems later in the paper.\\

The question of existence and uniqueness of solutions to SDEs with non-Lipschitz drift has been studied by various authors.

For the case where $\mu$ is only bounded and measurable
and $\sigma$ is bounded, Lipschitz, and 
satisfies a certain uniform ellipticity condition, \citet{zvonkin1974} and
\citet{veretennikov1981,veretennikov1982,veretennikov1984} 
prove existence and uniqueness of a solution by  removing the drift coefficient in a way such that the Lipschitz
condition of the diffusion coefficient is preserved.

But uniform ellipticity is a strong assumption which is -- as mentioned above -- frequently violated in applications.

In \citet{sz14} an existence and uniqueness result for  
\eqref{eq:SDE} is presented for the case where the drift is potentially
discontinuous at a hyperplane, or a special hypersurface, but well behaved
everywhere else and where the diffusion coefficient is
potentially degenerate. In that paper, not the whole drift is removed, but
only the discontinuity is removed locally from the drift.

Due to the weaker requirements on the diffusion coefficient the restriction to homogeneous SDEs does not pose any loss of generality. In \citet{sz2016a} the authors extend the result from \cite{sz14} to the time-inhomogeneous case.

In \citet{sz15} an existence and uniqueness result, as well as a numerical
method are presented for the one-dimensional
case with piecewise Lipschitz drift coefficient.  There the
coefficients are globally transformed into Lipschitz ones. Both
computation of the transformed coefficients and inversion can be done
efficiently.  This leads to a numerical method for one-dimensional SDEs 
through application of the Euler-Maruyama
scheme on the transformed equation and transforming the approximation back.
We present a simplified version of this result in Section \ref{sec:1-dimensional}.
\\

However, extending the result from \cite{sz15} to the $d$-dimensional case is
 far from being straightforward. 
One problem is that there is no immediate generalization of the concept
of a piecewise Lipschitz function with several variables that suits our needs. 
The second problem is that it is more difficult to obtain a transform that 
is a Lipschitz diffeomorphism $\R^d\longrightarrow\R^d$. We use Hadamard's
global inverse function theorem to prove that our transform is of this kind.
Moreover, we need to show that the transform and its inverse are sufficiently
well-behaved for It\^o's formula to hold. 

The coefficients of the SDE obtained by transforming the original one
are shown to be
Lipschitz, such that we can apply the
Euler-Maruyama method to the transformed SDE. An approximation to the original SDE is then obtained
by applying the inverse transform to the approximation of the transformed solution.
 For this scheme we show 
strong convergence with order $1/2$.
One might ask whether the results of Zvonkin and
Veretennikov give rise to a similar method. However, in order to apply their method one
would have to solve a system of parabolic partial differential equations (in
each step). Further, for using this solution in a numerical method
like ours, one would also have to find its inverse function. Therefore such a method, if it exists at all,
would be rather costly from the computational perspective.\\
 
In the present paper we present a transform for the multidimensional case which
allows to prove an existence and uniqueness result for
 $d$-dimensional SDEs with discontinuous drift and degenerate diffusion coefficient under conditions significantly weaker than those in 
the literature. The essential geometric condition in our setup
is that the diffusion must have a component orthogonal
to the set of discontinuities of the drift.

Furthermore, we 
present a numerical method for such SDEs based on the ideas outlined above.
Up to the authors' knowledge there is no other numerical method that can deal
with such a general class of SDEs and gives strong convergence, much less
giving  a strong convergence {\em rate}.\\

We are now going to review the literature on numerical methods for SDEs with non-globally Lipschitz drift coefficient.
In \citet{berkaoui2004} strong convergence of the Euler-Maruyama scheme is proven under the assumption that the drift is of class $C^1$. 
For an SDE with continuously differentiable but non-globally Lipschitz drift \citet{hutzenthaler2012} introduce a new explicit numerical scheme -- the tamed Euler scheme -- and prove its strong convergence.
\citet{sabanis2013} proves strong convergence of the tamed Euler scheme for SDEs with one-sided Lipschitz drift.
For the Euler-Maruyama scheme \citet{gyongy1998} proves almost sure convergence for the case that the drift satisfies a monotonicity condition.
A different approach is introduced by \citet{halidias2008}, who show that the Euler-Maruyama scheme converges
strongly for SDEs with a discontinuous monotone drift coefficient, especially mentioning the case in which the drift is a Heaviside function.  
\citet{kohatsu2013} show weak convergence of a method where they first regularize the drift and then apply the Euler-Maruyama scheme. They allow the drift to be discontinuous.
\citet{etore2013,etore2014} introduce an exact simulation algorithm for
one-dimensional SDEs that have a bounded drift coefficient being discontinuous
in one point, but differentiable everywhere else.

This paper is organized as follows. 
In Section \ref{sec:1-dimensional} we present the one-dimensional result and
algorithm in a form that can be generalized to multiple dimensions,
which is subsequently done in Section \ref{sec:d-dimensional}.
In Section \ref{sec:Example} we give two numerical examples: one where the drift coefficient has discontinuities along the unit circle in $\R^2$ and an example from stochastic optimal control.

Some of the more technical and geometrical proofs have been moved to the
appendix.


\section{The one-dimensional problem}\label{sec:1-dimensional}

Here we consider the one-dimensional version of SDE \eqref{eq:SDE} 
and give simple conditions for existence and uniqueness of a solution and a 
strong order 1/2 algorithm.
For this we recall the following definition.

\begin{definition}\label{def:pwlip-1dim}
Let $I\subseteq\R$ be an interval. We say a function $f:I\longrightarrow\R$
is piecewise Lipschitz, if there are finitely many points $\xi_1<\ldots<\xi_m\in
I$ such that $f$ is Lipschitz on each of the intervals $(-\infty,\xi_1)\cap I,
(\xi_m,\infty)\cap I$ and $(\xi_k,\xi_{k+1}),\,k=1,\ldots,m$.
\end{definition}

We make the following assumptions on the coefficients.

\begin{assumption}\label{ass:mu-1dim}
The drift coefficient $\mu:\R \longrightarrow \R$ is piecewise Lipschitz.
\end{assumption} 
\begin{assumption}\label{ass:sigma-1dim}
The diffusion coefficient $\sigma:\R\longrightarrow\R$ is Lipschitz with $\sigma(\xi) \neq 0$ whenever $\mu(\xi+) \neq \mu(\xi-)$.
\end{assumption}

For simplicity we derive the result for $\mu:\R\longrightarrow\R$ that is
Lipschitz with the exception of only a single point $\xi$ where $\mu$ is
allowed to
jump.
We are going to construct a transform $G:\R\longrightarrow \R$ such that 
the process formally defined by $Z=G(X)$ satisfies an SDE with 
Lipschitz coefficients and therefore has a solution by It\^o's classical
theorem on existence and uniqueness of solutions, see \cite{ito1951}.\\

For this define the following bump function on $\R$, which we need to localize the impact of the transform $G$:  
\begin{align}\label{eq:bump}
\phi(u)=
\begin{cases}
(1+u)^3(1-u)^3 & \text{if } |u|\le 1\,,\\
0 & \text{else}\,.
\end{cases}
\end{align}
The function $\phi$ has the following properties:
\vspace{-.5em}
\begin{enumerate}\setlength\itemsep{0em}
\item\label{it:phi1} $\phi$ defines a $C^2$ function on all of $\R$;
\item\label{it:phi2} $\phi(0)=1$, $\phi'(0)=0$, $\phi''(0)=-6$; 
\item\label{it:phi3} $\phi(u)=\phi'(u)=\phi''(u)=0$ for all $|u|\ge 1$.
\label{pg:remark123}
\end{enumerate}

We define the transform $G:\R\longrightarrow \R$ by
\begin{align}\label{eq:G-1d}
G(x)=x+\alpha \phi\left(\frac{x-\xi}{c}\right)(x-\xi)|x-\xi| \,,\quad x\in \R\,,
\end{align}
where $\alpha\ne 0$ and $c>0$ are some constants.

\begin{lemma}\label{lem:c-1dim}
 Let
$c<\frac{1}{6|\alpha|}$.

Then $G'(x)>0$ for all $x\in \R$. Furthermore, $G'(x)=1$ for all $|x-\xi|>c$.  Therefore $G$ has a global inverse $G^{-1}$.
\end{lemma}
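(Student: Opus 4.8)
The plan is to compute $G'$ explicitly, use the smallness of $c$ to keep $G'$ bounded away from $0$, and then deduce invertibility from the fact that a continuous, strictly increasing function mapping $\R$ onto $\R$ is a bijection. First I would substitute $u=(x-\xi)/c$, so that $(x-\xi)|x-\xi|=c^2u|u|$ and
\[
G(x)=x+\alpha c^2\,\phi(u)\,u|u|.
\]
The map $t\mapsto t|t|$ is $C^1$ on $\R$ with derivative $t\mapsto 2|t|$; this is the one point where a little care is needed, since $t\mapsto t|t|$ is not $C^2$ at the origin. As $\phi$ is $C^2$, the function $G$ is therefore $C^1$, and the chain and product rules give
\[
G'(x)=1+\alpha c\,\bigl(\phi'(u)\,u|u|+2\,\phi(u)\,|u|\bigr),\qquad u=\frac{x-\xi}{c}.
\]

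Next I would treat the region $|x-\xi|\ge c$: there $|u|\ge 1$, so $\phi(u)=\phi'(u)=0$ by the third listed property of $\phi$, hence $G'(x)=1$, which is the second assertion of the lemma. In fact $G(x)=x$ whenever $|x-\xi|\ge c$, i.e.\ $G$ coincides with the identity outside the compact interval $[\xi-c,\xi+c]$ --- a fact I will reuse for surjectivity. On the remaining region $|x-\xi|\le c$ (equivalently $|u|\le 1$) I would estimate the bracketed term: here $\phi(u)=(1-u^2)^3$, and since $u\mapsto\phi(u)u|u|$ is odd it suffices to consider $u\in[0,1]$, where $\phi(u)u|u|=u^2(1-u^2)^3$, so a one-line differentiation yields
\[
\phi'(u)\,u|u|+2\,\phi(u)\,|u|=2u(1-u^2)^2(1-4u^2).
\]
Using $|2u|\le 2$, $0\le(1-u^2)^2\le 1$ and $|1-4u^2|\le 3$ on $[0,1]$, this is at most $6$ in absolute value, whence $|G'(x)-1|\le 6|\alpha|\,c<1$ by the hypothesis $c<\tfrac{1}{6|\alpha|}$. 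Consequently $0<G'(x)<2$ for every $x\in\R$.

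Finally, since $G$ is continuous with $G'>0$ everywhere, $G$ is strictly increasing on $\R$; and since $G$ agrees with the identity outside $[\xi-c,\xi+c]$ we have $G(x)\to\pm\infty$ as $x\to\pm\infty$, so $G$ is a continuous bijection of $\R$ onto $\R$ and the inverse $G^{-1}$ exists (and is $C^1$ by the inverse function theorem). I do not expect a genuine obstacle here: the only mildly delicate points are the $C^1$-but-not-$C^2$ behaviour of $t\mapsto t|t|$ at $0$, and arranging the elementary polynomial estimate so that the resulting constant matches the stated bound on $c$.
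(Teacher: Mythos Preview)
Your proof is correct and follows essentially the same strategy as the paper's: compute $G'$ explicitly and show that the correction to the identity is small when $c<\tfrac{1}{6|\alpha|}$. The differences are minor but worth noting. The paper writes out $G'$ directly in $x$ and then does a sign-based case analysis: for $\alpha>0$ the condition $c<\tfrac{1}{6|\alpha|}$ is needed, whereas for $\alpha<0$ the weaker bound $c<\tfrac{1}{2|\alpha|}$ already suffices; the stated hypothesis is simply the stronger of the two. Your substitution $u=(x-\xi)/c$ and the observation that $\frac{d}{du}\bigl(\phi(u)u|u|\bigr)=2u(1-u^2)^2(1-4u^2)$ give instead a uniform absolute value bound $|G'(x)-1|\le 6|\alpha|c$, which is cleaner and sign-independent, at the cost of not revealing the asymmetry in $\alpha$. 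You also spell out the invertibility step (strict monotonicity plus $G(x)\to\pm\infty$), which the paper leaves implicit.
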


\begin{proof}
Differentiating $G$ for $|x-\xi|\le c$ yields
\begin{align*}
 G'(x)=1-\frac{6 \alpha}{c^2}(x-\xi)^2 |x-\xi| \left(1+\frac{x-\xi}{c}\right)^2 \left(1-\frac{x-\xi}{c}\right)^2+2 \alpha |x-\xi| \left(1+\frac{x-\xi}{c}\right)^3 \left(1-\frac{x-\xi}{c}\right)^3\,.
\end{align*}
For positive $\alpha$ this is positive, if $c<\frac{1}{6|\alpha|}$.
For negative $\alpha$ it is positive, if $c<\frac{1}{2|\alpha|}$.
Altogether a sufficient condition for $G'$ to be positive is $c<\frac{1}{6|\alpha|}$.
\end{proof}

 W.l.o.g. we always choose 
$c<\frac{1}{6|\alpha|}$, such that $G$ has a global inverse.

\begin{remark}
In \cite{sz15} the function $G$ is constructed differently. There, $G$ is
piecewise cubic, such that $G^{-1}$ is piecewise radical and hence admits
exact inversion, which is advantageous for the numerical treatment.

In fact, $G$ can be made piecewise cubic by still using equation \eqref{eq:G-1d},
but with a different choice for $\phi$.
Actually, any function $\phi$ with support contained in $[-1,1]$ satisfying
properties \ref{it:phi1}., \ref{it:phi2}., \ref{it:phi3}. from page \pageref{pg:remark123} 
will give rise to a transform $G$ sufficient for our purpose, with 
a similar condition on the constant $c$ for $G$ to be invertible.
The form chosen here is simple in the one-dimensional
case and
has a direct multidimensional analog.
\end{remark}

Formally define $Z=G(X)$.
Abbreviating $\bar\phi(x):=\phi(\frac{x-\xi}{c})(x-\xi)|x-\xi| $, we have
\begin{align}
dZ&=dX+\alpha \bar\phi'(X)dX+\frac{1}{2}\alpha\bar\phi''(X)d[X]\nonumber\\
&=\left(\mu(X)+\alpha\bar \phi'(X) \mu(X)+\frac{1}{2}\alpha\bar\phi''(X)\sigma(X)^2\right) dt+\left(\sigma(X) +\alpha\bar \phi'(X)\sigma(X)\right)dW\nonumber\\
&=\tilde \mu(Z) dt +\tilde\sigma(Z) dW\,, \label{eq:Z-1d}
\end{align}
where
\begin{align*}
\tilde\mu(z)&=\mu(G^{-1}(z))+\alpha\bar \phi'(G^{-1}(z)) \mu(G^{-1}(z))+\frac{1}{2}\alpha\bar\phi''(G^{-1}(z))\sigma(G^{-1}(z))^2\,,\\
\tilde\sigma(z)&=\sigma(G^{-1}(z)) +\alpha\bar \phi'(G^{-1}(z))\sigma(G^{-1}(z))\,.
\end{align*}

We now show
that, for an appropriate choice of $\alpha$, the transformed drift 
$\tilde \mu$ is Lipschitz.
For this we need the following elementary lemma from \cite{sz15}.

\begin{lemma}\label{th:lipschitz}
Let $f:\R\longrightarrow\R$ be piecewise Lipschitz and continuous.

Then $f$ is Lipschitz on $\R$. 
\end{lemma}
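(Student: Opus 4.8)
The plan is to reduce the claimed global estimate to the finitely many local ones by cutting any segment at the breakpoints $\xi_1<\ldots<\xi_m$ witnessing that $f$ is piecewise Lipschitz. Write $L$ for the maximum of the Lipschitz constants of $f$ on the intervals $(-\infty,\xi_1)$, $(\xi_m,\infty)$ and $(\xi_k,\xi_{k+1})$, $k=1,\ldots,m-1$.

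First I would upgrade each local estimate from an open interval to its closure. Fix such an interval, say $J=(\xi_k,\xi_{k+1})$, and let $\bar J$ be its closure. Given $x,y\in\bar J$, pick sequences $x_n,y_n\in J$ with $x_n\to x$ and $y_n\to y$; then $|f(x_n)-f(y_n)|\le L|x_n-y_n|$ for every $n$, and letting $n\to\infty$ while using the continuity of $f$ yields $|f(x)-f(y)|\le L|x-y|$. Hence $f$ is $L$-Lipschitz on each of the closed intervals $(-\infty,\xi_1]$, $[\xi_k,\xi_{k+1}]$ and $[\xi_m,\infty)$.

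Next, for arbitrary $x<y$ in $\R$ I would list the breakpoints lying in $[x,y]$, say $\xi_i,\ldots,\xi_j$ (possibly none), and set $t_0=x<t_1=\xi_i<\ldots<t_{r-1}=\xi_j<t_r=y$. Each consecutive pair $t_\ell,t_{\ell+1}$ lies in one of the closed intervals above, so by the triangle inequality and a telescoping sum,
\[
|f(y)-f(x)|\le\sum_{\ell=0}^{r-1}|f(t_{\ell+1})-f(t_\ell)|\le L\sum_{\ell=0}^{r-1}(t_{\ell+1}-t_\ell)=L(y-x).
\]
Since $x,y$ were arbitrary, $f$ is Lipschitz on $\R$ with constant $L$.

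The only step requiring any care is the passage from the open intervals to their closures, and even that is a one-line limiting argument using nothing beyond continuity; the remainder is a finite concatenation exploiting that there are only finitely many breakpoints. There is thus no real obstacle — the statement is elementary — and the write-up amounts to the three steps above.
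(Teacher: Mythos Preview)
Your argument is correct and is exactly the standard one: extend the local Lipschitz estimates to the closed subintervals by continuity, then telescope across the finitely many breakpoints. The paper itself does not actually prove this lemma---the proof is omitted with a reference to \cite{sz15} as ``elementary''---but your approach matches precisely the strategy the paper later uses for the multidimensional generalization (Lemma~\ref{th:intrinsic_lip}), where the segment is cut at the intersection points with $\Theta$ and continuity is invoked via a limit $h\to 0+$ on each piece.
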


\exclude{
\begin{proof}
The elementary proof can be found in \cite{sz15}.
\end{proof}
}

From Lemma \ref{th:lipschitz} and $\lim_{h\to 0}\bar \phi'(h)=0$ we see that the mapping 
$z\mapsto \bar \phi'(G^{-1}(z)) \mu(G^{-1}(z))$ is Lipschitz. In order to make the mapping
$z\mapsto \mu(G^{-1}(z))+\frac{1}{2}\alpha\bar\phi''(G^{-1}(z))\sigma(G^{-1}(z))^2$ continuous, we
need to choose $\alpha$ so that 
\[
\mu(G^{-1}(\xi+))+\frac{1}{2}\alpha\bar\phi''(G^{-1}(\xi+))\sigma(G^{-1}(\xi+))^2
=\mu(G^{-1}(\xi-))+\frac{1}{2}\alpha\bar\phi''(G^{-1}(\xi-))\sigma(G^{-1}(\xi-))^2\,,
\]
i.e.
\[
\mu(\xi+)+\frac{1}{2}\alpha\bar\phi''(\xi+)\sigma(\xi)^2
=\mu(\xi-)+\frac{1}{2}\alpha\bar\phi''(\xi-)\sigma(\xi)^2\,.
\]
Thus we get, for the choice
\[
\alpha=-2\frac{\mu(\xi+)-\mu(\xi-)}{(\bar\phi''(\xi+)-\bar\phi''(\xi-))\sigma(\xi)^2}
=\frac{\mu(\xi-)-\mu(\xi+)}{2\sigma(\xi)^2}
\]
that $\tilde \mu$ is continuous.
Note that at this point we need non-degeneracy of $\sigma$ in $\xi$.\\

Since $\tilde \mu$ is continuous with the appropriate choice of $\alpha$,
it is Lipschitz as well by Lemma \ref{th:lipschitz}.

One may worry about the quadratic occurrence of $\sigma$ in the expression
for $\tilde \mu$. Note, however, that $\bar\phi''$ vanishes outside $[-c,c]$.\\

To prove that $\tilde \sigma$ is Lipschitz as well, we need the following lemma:

\begin{lemma}\label{th:lip1}
Let $f:\R\longrightarrow\R$ be Lipschitz. 
Then 
$f\phi'$ is Lipschitz.
\end{lemma}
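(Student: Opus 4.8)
The plan is to exploit two special features of the factor $\phi'$: it is itself Lipschitz, and its support is contained in the compact set $[-1,1]$. First I would record that by property \ref{it:phi1} the function $\phi$ is $C^2$, so $\phi'$ is $C^1$; being continuously differentiable with compact support (property \ref{it:phi3}, which in particular gives $\phi'(\pm1)=0$), $\phi'$ is globally Lipschitz and bounded. I set $K:=\sup_{u\in\R}|\phi'(u)|$ and $L:=\operatorname{Lip}(\phi')$ (one may take $L=\sup_{u\in\R}|\phi''(u)|$).

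The second ingredient is that a globally Lipschitz $f$, although not necessarily bounded on all of $\R$, is bounded on the compact support of $\phi'$: writing $M:=\sup_{|u|\le1}|f(u)|$ we have $M\le|f(0)|+\operatorname{Lip}(f)<\infty$. This is the only place where compactness of $\operatorname{supp}\phi'$ enters, and it is precisely what rescues the argument, since in general the product of a bounded Lipschitz function with an unbounded Lipschitz function need not be Lipschitz.

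Given these two facts, the estimate is a short case distinction. For $x,y\in\R$: if $|x|>1$ and $|y|>1$ then $f(x)\phi'(x)=f(y)\phi'(y)=0$ and there is nothing to prove; otherwise, relabelling $x$ and $y$ if necessary, we may assume $|x|\le1$, and then
\[
|f(x)\phi'(x)-f(y)\phi'(y)|\le|f(x)|\,|\phi'(x)-\phi'(y)|+|\phi'(y)|\,|f(x)-f(y)|\le\bigl(ML+K\operatorname{Lip}(f)\bigr)|x-y|\,,
\]
so $f\phi'$ is Lipschitz with constant $ML+K\operatorname{Lip}(f)$.

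I do not expect a genuine obstacle: the only subtlety is to remember that $f$ is controlled only on $[-1,1]$ rather than globally, and to arrange the case split so that the factor $|\phi'(x)-\phi'(y)|$ is always paired with a value $|f(x)|$ having $|x|\le1$. Everything else is routine.
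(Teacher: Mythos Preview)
Your proof is correct and follows essentially the same strategy as the paper's: use that $\phi'$ is globally Lipschitz and bounded, that $f$ is bounded on the compact support $[-1,1]$, and then split cases according to whether the arguments lie in the support. Your organisation is in fact slightly tidier---by writing the difference as $f(x)(\phi'(x)-\phi'(y))+\phi'(y)(f(x)-f(y))$ with $|x|\le 1$, you handle the ``mixed'' case and the ``both inside'' case with a single estimate, whereas the paper treats them separately using $\phi'(\pm1)=0$.
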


\begin{proof}
Let $L_f$ be a Lipschitz constant for $f$. Note that $6$ is a Lipschitz constant
for $\phi'$. If $|x|,|y|\le 1$, then 
\begin{align*}
|f(x)\phi'(x)-f(y)\phi'(y)|
&\le|f(x)\phi'(x)-f(y)\phi'(x)|+|f(y)\phi'(x)-f(y)\phi'(y)|\\
&\le L_f |x-y|\, \max_{z\in [-1,1]}|\phi'(z)|+6|x-y| \max_{z\in [-1,1]}|f(z)|\\
&\le K |x-y|\,,
\end{align*}
where $K=L_f |x-y|\, \max_{z\in [-1,1]}|\phi'(z)|+6|x-y| \max_{z\in [-1,1]}|f(z)|$.
For $-1\le x\le 1< y$ we have 
\begin{align*}
|f(x)\phi'(x)-f(y)\phi'(y)|
&=|f(x)\phi'(x)|=|f(x)\phi'(x)-f(1)\phi'(1)|
\le K |x-1| \le K |x-y|\,.
\end{align*}
 The same  estimate holds for the case $x<-1\le y  \le 1$.
For $|x|,|y|>1$ we have $|f(x)\phi'(x)-f(y)\phi'(y)|=0\le K |x-y|$.
\end{proof}

Thus, $\tilde\sigma$ is Lipschitz by Lemma \ref{th:lip1} and the fact that the
composition of Lipschitz functions is Lipschitz. 
\\

Altogether we have that the SDE \eqref{eq:Z-1d} for $Z$ has Lipschitz coefficients $\tilde\mu$ and $\tilde \sigma$.\\

The generalization to finitely many discontinuities of $\mu$ in the points 
$\xi_1<\dots<\xi_m$ is now 
straightforward: define 
\[
G(x):=x+\sum_{k=1}^m \alpha_k \phi\Big(\frac{x-\xi_k}{c}\Big)(x-\xi_k)|x-\xi_k|\,,
\]
with
\[
\alpha_k
=\frac{\mu(\xi_k-)-\mu(\xi_k+)}{2\sigma(\xi_k)^2}
\qquad\text{and}\qquad 
c<\min\Big(\min\limits_{1\le k\le m}\frac{1}{6|\alpha_k|},\min\limits_{1\le k\le m-1}\frac{\xi_{k+1}-\xi_k}{2}\Big)\,.
\]

We are ready to prove existence and uniqueness of a solution to the one-dimensional SDE \eqref{eq:SDE}.

\begin{theorem}[\mbox{cf.~\cite[Theorem 2.2]{sz16}}]
\label{exun-1d}
Let Assumptions \ref{ass:mu-1dim}, and \ref{ass:sigma-1dim}
be satisfied, i.e.~$\mu$ is piecewise Lipschitz with finitely many jump points, $\sigma$ is Lipschitz and $\forall \xi: \mu(\xi+)\neq \mu(\xi-) \Longrightarrow \sigma(\xi)\neq 0$.  

Then the
one-dimensional SDE \eqref{eq:SDE} has a unique global strong solution.
\end{theorem}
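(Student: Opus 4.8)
The plan is to reduce the original SDE to the transformed SDE \eqref{eq:Z-1d}, solve the latter by classical theory, and then transfer existence and uniqueness back. Concretely, I would first invoke the construction preceding the theorem: with the choices $\alpha_k=\frac{\mu(\xi_k-)-\mu(\xi_k+)}{2\sigma(\xi_k)^2}$ and $c$ smaller than both $\min_k \frac{1}{6|\alpha_k|}$ and half the minimal gap between consecutive discontinuities, the map $G$ is a $C^1$ (indeed piecewise $C^2$) bijection $\R\to\R$ with $G'>0$ everywhere (Lemma \ref{lem:c-1dim} applied on each window $|x-\xi_k|\le c$, which are disjoint), hence $G^{-1}$ exists and is Lipschitz with $G^{-1}{}'$ bounded. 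The discussion in the text shows that with this choice of the $\alpha_k$ the coefficients $\tilde\mu,\tilde\sigma$ of \eqref{eq:Z-1d} are Lipschitz on $\R$ (using Lemma \ref{th:lipschitz}, Lemma \ref{th:lip1}, and that compositions of Lipschitz maps are Lipschitz). Therefore, by It\^o's theorem \cite{ito1951}, the SDE $dZ=\tilde\mu(Z)\,dt+\tilde\sigma(Z)\,dW$, $Z_0=G(x)$, has a unique global strong solution $Z$.

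Next I would set $X:=G^{-1}(Z)$ and verify that $X$ solves \eqref{eq:SDE} with $X_0=x$. Since $G^{-1}$ is only $C^1$ with an absolutely continuous (indeed Lipschitz) derivative — $G$ being merely piecewise $C^2$ because $|x-\xi_k|$ is not twice differentiable at $\xi_k$ — the classical It\^o formula does not literally apply, so I would use the It\^o--Tanaka / Itô--Meyer formula for $C^1$ functions with absolutely continuous derivative (or, equivalently, an It\^o formula for functions that are $C^2$ off finitely many points together with the observation that the local time of $Z$ at the single point $G(\xi_k)$ contributes nothing because $G^{-1}{}''$ has no atom there and the ``jump'' in $G^{-1}{}'$ is in fact zero — $G^{-1}$ is $C^1$). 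Applying this to $X=G^{-1}(Z)$ and using the explicit relations $\tilde\mu=\big((1+\alpha\bar\phi')\mu+\tfrac12\alpha\bar\phi''\sigma^2\big)\circ G^{-1}$ and $\tilde\sigma=\big((1+\alpha\bar\phi')\sigma\big)\circ G^{-1}$ together with $(G^{-1})'=1/G'=1/(1+\alpha\bar\phi')\circ G^{-1}$ and the computed value of $(G^{-1})''$, the drift and diffusion collapse exactly to $\mu(X)\,dt+\sigma(X)\,dW$; this is just the computation \eqref{eq:Z-1d} read backwards. Hence $X$ is a global strong solution of \eqref{eq:SDE}.

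For uniqueness, suppose $X$ is any global strong solution of \eqref{eq:SDE}. Then $Z:=G(X)$ is well defined, and applying the same It\^o--Tanaka formula to $G$ (which is genuinely $C^1$ with Lipschitz derivative, hence covered) shows $Z$ satisfies \eqref{eq:Z-1d} with $Z_0=G(x)$; since $G'=1$ outside the finitely many windows, no local-time term at the $\xi_k$ survives. By uniqueness for the transformed SDE, $Z$ is determined $\P$-a.s., and since $G$ is a bijection, $X=G^{-1}(Z)$ is determined $\P$-a.s. This gives pathwise uniqueness, and combined with existence we obtain a unique global strong solution, proving the theorem.

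The main obstacle is the regularity gap in applying It\^o's formula: $G$ and $G^{-1}$ are not $C^2$ at the discontinuity points $\xi_k$ (respectively $G(\xi_k)$), so one must justify the change-of-variables step via a Tanaka-type formula and check that the local-time contributions genuinely vanish — which they do precisely because the first derivatives $G'$ and $(G^{-1})'$ are continuous (no jump) at those points, even though the second derivatives are not. Everything else (positivity and invertibility of $G$, Lipschitz property of $\tilde\mu$ and $\tilde\sigma$) has already been established in the text, so the only real work is this careful application of the generalized It\^o formula in both directions.
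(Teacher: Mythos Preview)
Your proposal is correct and follows essentially the same route as the paper: solve the transformed SDE \eqref{eq:Z-1d} by classical theory, then apply a generalized It\^o formula to $G^{-1}$ (the paper simply cites \cite[5.~Problem 7.3]{karatzas1991} for this, where you invoke It\^o--Tanaka/It\^o--Meyer) to obtain the solution of \eqref{eq:SDE}. Your treatment is in fact more explicit than the paper's on the uniqueness direction (applying It\^o's formula to $G$ to map any solution $X$ back to $Z$), which the paper leaves implicit.
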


\begin{proof}
 Since the SDE \eqref{eq:Z-1d} for $Z$ has Lipschitz coefficients, it follows
that \eqref{eq:Z-1d} with initial condition $Z_0=G(x)$  has a unique global
strong solution.  Furthermore, $G$ has a global inverse $G^{-1}$, which
inherits the smoothness from $G$.
 Although $G^{-1} \notin C^2$, It\^o's formula holds for $G^{-1}$, see \cite[5. Problem 7.3]{karatzas1991}.
 Applying It\^o's formula to $G^{-1}$, we obtain that $G^{-1}(Z)$ satisfies
\begin{align*}
dX=\mu(X)dt+\sigma(X)dW\,,\quad X_0=x\,.
\end{align*}
Setting $X=G^{-1}(Z)$ yields the desired result.
\end{proof}

For approximating the solution to the one-dimensional SDE \eqref{eq:SDE} we propose the following numerical method.
Let $Z_T^{(\delta)}$ be the Euler-Maruyama approximation of the solution to SDE \eqref{eq:Z-1d} with step size smaller than $\delta>0$.
\begin{algorithm}\label{alg:num-1d}
Go through the following steps:
\begin{enumerate}
 \item Set $Z^{(\delta)}_0=G(x)$.
 \item Apply the Euler-Maruyama method to the SDE \eqref{eq:Z-1d} to obtain $Z_T^{(\delta)}$.
 \item Set $\bar X = G^{-1}\left(Z_T^{(\delta)}\right)$.
\end{enumerate}
\end{algorithm}

\begin{theorem}[\mbox{cf.\cite[Theorem 3.1]{sz16}}]\label{thm:conv-1d}
Let Assumptions \ref{ass:mu-1dim}, and \ref{ass:sigma-1dim} be satisfied.

Then Algorithm \ref{alg:num-1d}
converges with strong order $1/2$ to the solution X of the one-dimensional SDE \eqref{eq:SDE}.
\end{theorem}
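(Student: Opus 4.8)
The plan is to transfer the known strong-convergence rate for the Euler--Maruyama scheme applied to the transformed SDE \eqref{eq:Z-1d} back to the original SDE via the global diffeomorphism $G$. First I would note that by Lemma \ref{th:lipschitz}, Lemma \ref{th:lip1}, and the discussion following \eqref{eq:Z-1d}, the coefficients $\tilde\mu$ and $\tilde\sigma$ of \eqref{eq:Z-1d} are globally Lipschitz (using the prescribed choice of $\alpha$, resp.\ the $\alpha_k$, and the constraint on $c$). Hence the classical result \cite[Theorem 10.2.2]{kloeden1992} applies: the Euler--Maruyama approximation $Z_T^{(\delta)}$ with step size at most $\delta$ satisfies $\bigl(\E|Z_T - Z_T^{(\delta)}|^2\bigr)^{1/2}\le C\,\delta^{1/2}$ for some constant $C$ independent of $\delta$, where $Z$ is the solution of \eqref{eq:Z-1d} with $Z_0 = G(x)$.

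The second step is to pull this estimate back through $G^{-1}$. By Theorem \ref{exun-1d} and its proof, $X = G^{-1}(Z)$ is the unique global strong solution of the original SDE \eqref{eq:SDE}, and Algorithm \ref{alg:num-1d} outputs $\bar X = G^{-1}(Z_T^{(\delta)})$. So it suffices to show $G^{-1}$ is globally Lipschitz, because then
\[
\bigl(\E|X_T - \bar X|^2\bigr)^{1/2} = \bigl(\E|G^{-1}(Z_T) - G^{-1}(Z_T^{(\delta)})|^2\bigr)^{1/2}\le L_{G^{-1}}\bigl(\E|Z_T - Z_T^{(\delta)}|^2\bigr)^{1/2}\le L_{G^{-1}}\,C\,\delta^{1/2}\,,
\]
which is exactly strong order $1/2$. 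The global Lipschitz property of $G^{-1}$ follows from Lemma \ref{lem:c-1dim}: there $G' > 0$ everywhere and $G'(x) = 1$ outside the compact sets $[\xi_k - c,\xi_k+c]$, so $G'$ is bounded below by a positive constant $m := \min_x G'(x) > 0$ (the minimum is attained by continuity and the fact that $G'\equiv 1$ off a compact set), whence $(G^{-1})' = 1/(G'\circ G^{-1})\le 1/m$, i.e.\ $G^{-1}$ is Lipschitz with constant $1/m$.

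I expect the main technical point to be the clean justification that $G^{-1}$ is globally Lipschitz, i.e.\ that $\inf_x G'(x) > 0$; this is not entirely immediate from $G' > 0$ alone and genuinely uses the localization property $G'\equiv 1$ outside a compact set established in Lemma \ref{lem:c-1dim}. Everything else is bookkeeping: checking that the hypotheses of \cite[Theorem 10.2.2]{kloeden1992} are met (Lipschitz coefficients, finite second moment of $Z_0 = G(x)$, which is immediate since $Z_0$ is deterministic), and matching the output of Algorithm \ref{alg:num-1d} with $G^{-1}(Z_T^{(\delta)})$. One should also remark that the constant $C$ absorbs the dependence on $x$ through $G(x)$ but is uniform in $\delta$, so the order statement is meaningful.
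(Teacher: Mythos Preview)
Your proposal is correct and follows essentially the same route as the paper: use the Lipschitz property of $\tilde\mu,\tilde\sigma$ to invoke \cite[Theorem 10.2.2]{kloeden1992} for $Z$, then pull back the $L^2$-estimate through the Lipschitz map $G^{-1}$. In fact you supply more detail than the paper does on \emph{why} $G^{-1}$ is globally Lipschitz (via $\inf_x G'(x)>0$ from Lemma \ref{lem:c-1dim}); the paper simply writes $L_{G^{-1}}$ for the Lipschitz constant of $G^{-1}$ without further comment.
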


\begin{proof}
We estimate the $L^2$-error of the approximation.
For every $T>0$ there is a constant $C$, such that 
 \begin{align*}
\E\left(\left(X_T- \bar X_T\right)^2\right)
= \E\left(\left(G^{-1}\left(Z_T\right)- G^{-1}\left(Z_T^{(\delta)}\right)\right)^2\right)
\le L_{G^{-1}}^2\E\left(\left(Z_T- Z_T^{(\delta)}\right)^2\right)= L_{G^{-1}}^2C \delta
\end{align*}
for every sufficiently small step size $\delta$, where $L_{G^{-1}}$ is the Lipschitz constant of $G^{-1}$ and where we applied \cite[Theorem 10.2.2]{kloeden1992} for the $L^2$-convergence of
the Euler-Maruyama scheme for SDEs with Lipschitz coefficients.
\end{proof}


\section{The multidimensional problem}\label{sec:d-dimensional}

We now consider the multidimensional case.
Like in dimension one, we will have to make assumptions on the drift so
that it is Lipschitz apart from  -- relatively few -- locations of 
discontinuity.
That is, we need a concept similar to that of ``piecewise Lipschitz''
in the one-dimensional case. We will develop such a concept now.

In contrast to the one-dimensional case, we shall have to make additional 
assumptions on the behaviour of the drift close to its points of
discontinuity, which shall all lie in a hypersurface $\hypsurf$.

Regarding the diffusion coefficient we need to find a condition corresponding
to Assumption \ref{ass:sigma-1dim}.

Note that most of these assumptions 
are automatically satisfied, or can at least be weakened, if $\hypsurf$ is compact.
We will treat the case of compact $\hypsurf$ in Section \ref{subsec:compact}.

\subsection{Piecewise Lipschitz functions}

For a continuous curve $\gamma:[0,1]\longrightarrow \R^d$, let  $\ell(\gamma)$ denote its length,
\[
\ell(\gamma):=\sup_{n,0\le t_1<\ldots<t_n\le 1}\sum_{k=1}^n \|\gamma(t_k)-\gamma(t_{k-1})\|\;\in[0,\infty]\,.
\] 

\begin{definition}
Let $A\subseteq \R^d$. The {\em intrinsic metric} $d$ on $A$ 
is given by 
\[
\rho(x,y):=\inf\{\ell(\gamma):\gamma:[0,1]\rightarrow A \text{ is a  continuous curve satisfying } \gamma(0)=x,\, \gamma(1)=y\}\,,
\]
where $\rho(x,y):=\infty$, if there is no continuous curve from $x$
to $y$.  
\end{definition}

\begin{definition}\label{def:intLip}
Let $A\subseteq\R^d$. Let $f:A\longrightarrow \R^m$ be a function.
We say that $f$ is {\em intrinsic Lipschitz}, if it is Lipschitz w.r.t. the
intrinsic metric on $A$, i.e. if there exists a constant $L$ such that
\[
\forall x,y\in A: \|f(x)-f(y)\|\le L \rho(x,y)\,.
\]
\end{definition}

\begin{remark}
Note that for a function $f:\R\longrightarrow\R$ we have that $f$ is piecewise
Lipschitz, iff $f$ is intrinsic Lipschitz on $\R\backslash B$, where $B$ is a
finite
subset of $\R$. 
\end{remark}

This motivates the following definition:

\begin{definition}\label{def:pw-lip}
A function $f:\R^d\longrightarrow\R^m$ is {\em piecewise Lipschitz}, if
there exists a hypersurface\footnote{By a hypersurface we mean a 
$(d-1)$-dimensional 
submanifold of the $\R^d$.} $\hypsurf$ with finitely many components and 
with the property, that
the restriction $f|_{\R^d\backslash \hypsurf}$ is intrinsic Lipschitz.
We call $\hypsurf$ an {\em exceptional set} for $f$.
\end{definition}

The definition is more general than the more obvious requirement
that $\R^d$  can be partitioned into finitely many patches in a way
such that $f$ is Lipschitz on all of the patches. This is illustrated by
the following example. 

\begin{example}\label{ex:intLip}
Consider the function
$f:\R^2\longrightarrow\R$, $f(x)=\|x\|\arg(x)$. Then $f$ is not Lipschitz, since 
$\lim_{h\to 0+}f(\cos(\pi-h),\sin(\pi-h))=\pi$ and 
$\lim_{h\to 0+}f(\cos(\pi+h),\sin(\pi+h))=-\pi$ for $x_1<0$.

It is readily checked, however, that $f$ is intrinsic Lipschitz on $A=\R^2\backslash\{x\in\R^2: x_1<0,x_2=0\}$ and $\{x\in\R^2: x_1<0,x_2=0\}$ is obviously a 
one-dimensional submanifold of $\R^2$.

Thus $f$ is piecewise Lipschitz in the sense of Definition \ref{def:pw-lip}.
\end{example}

The following lemma is a multidimensional
generalization of  Lemma \ref{th:lipschitz}.

\begin{lemma} \label{th:intrinsic_lip}
Let $f:\R^d\longrightarrow \R^m$ be a function. If
\begin{enumerate}
\item $f$ is continuous in every point $x\in \R^d$;
\item $f$ is piecewise Lipschitz with exceptional set $\hypsurf$;
\item \label{it:conj} for $x,y\in \R^d$ and $\eta>0$ 
there exists a continuous curve $\gamma$ from $x$ to $y$ with $\ell(\gamma)<\|x-y\|+\eta$ such that $\#(\gamma\cap \hypsurf)<\infty$.
\end{enumerate}
Then 
$f$ is Lipschitz on $\R^d$ w.r.t. the Euclidean metric, and with the same Lipschitz constant.
\end{lemma}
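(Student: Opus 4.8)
The plan is to show that intrinsic-Lipschitz-away-from-$\hypsurf$ plus global continuity upgrades to genuine Euclidean Lipschitz continuity, using condition \ref{it:conj} to route curves between two points so that they stay in $\R^d\setminus\hypsurf$ except at finitely many points, and then using continuity of $f$ to bridge across those finitely many crossings. Let $L$ be an intrinsic Lipschitz constant for $f|_{\R^d\setminus\hypsurf}$. Fix $x,y\in\R^d$ and $\eta>0$. By hypothesis \ref{it:conj} there is a continuous curve $\gamma:[0,1]\to\R^d$ from $x$ to $y$ with $\ell(\gamma)<\|x-y\|+\eta$ and with $\gamma$ meeting $\hypsurf$ in only finitely many parameter values $0\le s_1<\dots<s_N\le 1$ (I will first reduce to the case that the endpoints $x,y$ themselves lie off $\hypsurf$: if, say, $x\in\hypsurf$, replace it by a nearby point $x'\notin\hypsurf$ and let $x'\to x$ at the end, invoking continuity of $f$ — this uses that $\hypsurf$, being a $(d-1)$-submanifold, has dense complement).

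The key step is the following. On each open subinterval $(s_j,s_{j+1})$ the curve $\gamma$ lies entirely in $\R^d\setminus\hypsurf$, so for parameters $s_j<u<v<s_{j+1}$ the portion $\gamma|_{[u,v]}$ is an admissible curve for the intrinsic metric on $\R^d\setminus\hypsurf$, whence
\[
\|f(\gamma(v))-f(\gamma(u))\|\le L\,\ell(\gamma|_{[u,v]})\,.
\]
Letting $u\downarrow s_j$ and $v\uparrow s_{j+1}$ and using continuity of $f$ (and lower semicontinuity/additivity of arclength) gives
\[
\|f(\gamma(s_{j+1}))-f(\gamma(s_j))\|\le L\,\ell(\gamma|_{[s_j,s_{j+1}]})\,.
\]
The crossing points are harmless precisely because we take limits and $f$ is continuous there. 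Summing over $j=0,\dots,N$ (with $s_0:=0$, $s_{N+1}:=1$) and using additivity of length along $\gamma$,
\[
\|f(x)-f(y)\|=\|f(\gamma(0))-f(\gamma(1))\|\le L\sum_{j=0}^{N}\ell(\gamma|_{[s_j,s_{j+1}]})=L\,\ell(\gamma)<L(\|x-y\|+\eta)\,.
\]
Since $\eta>0$ was arbitrary, $\|f(x)-f(y)\|\le L\|x-y\|$, which is the claim with the same constant $L$.

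The main obstacle is the bookkeeping at the crossing points $s_j$: one must be careful that $\gamma$ restricted to a closed subinterval between consecutive crossings is still a valid competitor in the infimum defining $\rho$ on $\R^d\setminus\hypsurf$ — it is not, since the endpoints $\gamma(s_j)$ may lie on $\hypsurf$ — so the limiting argument above, together with continuity of $f$, is doing the real work and must be spelled out. A secondary technical point is the reduction to off-$\hypsurf$ endpoints: one should check that $\R^d\setminus\hypsurf$ is dense (true since $\hypsurf$ is a finite union of $(d-1)$-submanifolds, hence Lebesgue-null and with empty interior) and that condition \ref{it:conj}'s curves can be chosen with endpoints slightly perturbed, or simply apply the estimate to perturbed endpoints $x',y'\notin\hypsurf$ and pass to the limit $x'\to x$, $y'\to y$ at the very end using continuity of $f$ on all of $\R^d$. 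Everything else — finiteness of $N$, additivity of arclength, the triangle inequality — is routine.
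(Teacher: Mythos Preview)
Your proof is correct and follows essentially the same strategy as the paper's: use the curve from condition~\ref{it:conj}, split it at the finitely many crossings with $\hypsurf$, apply the intrinsic Lipschitz bound on each piece, bridge across the crossing points by continuity of $f$, sum, and let $\eta\to 0$. The paper first treats the case where the straight segment $\overline{xy}$ already has only finitely many intersections with $\hypsurf$ and only then invokes condition~\ref{it:conj}, whereas you go directly to condition~\ref{it:conj}; your version is slightly more uniform, but the idea is the same. Your reduction to endpoints off $\hypsurf$ is unnecessary (if $x\in\hypsurf$ then $0$ is simply one of the $s_j$ and the corresponding degenerate interval contributes zero), but it does no harm.
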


\begin{proof}
Let $L$ be the intrinsic Lipschitz constant of $f$, i.e.~$\|f(y)-f(x)\|\le L\rho(x,y)$ for all $x,y\in \R^d$, and let $x,y\in \R^d$. 
If $\rho(x,y)=\|x-y\|$, then clearly $\|f(x)-f(y)\|\le L \rho(x,y) =L\|x-y\|$.

If $\rho(x,y)>\|x-y\|$, then the line segment $s(x,y):=\{(1-\lambda)x+\lambda y:\lambda\in [0,1]\}$ has non-empty intersection with $\hypsurf$.

Consider first the case where $s(x,y)\cap \hypsurf=\{z_1,\ldots,z_n\}$, i.e.~we have finite
intersection. There exist $\lambda_1,\ldots,\lambda_n$ such that
$z_k=(1-\lambda_k)x+\lambda_k y$.
Define $g:[0,1]\longrightarrow \R^m$ by $g(\lambda):=f((1-\lambda)x+\lambda y)$.

Set $z_0=x,z_{n+1}=y, \lambda_0=0,\lambda_{n+1}=1$. W.l.o.g.,
$\lambda_0<\ldots<\lambda_{n+1}$. Now
\begin{align*}
\|f(y)-f(x)\|&=\|\sum_{k=1}^{n+1} f(z_k)-f(z_{k-1})\|
\le \sum_{k=1}^{n+1} \|f(z_k)-f(z_{k-1})\|
=\sum_{k=1}^{n+1} \|g(\lambda_k)-g(\lambda_{k-1})\|\\
&=\lim_{h\to 0+}\sum_{k=1}^{n+1} \|g(\lambda_k-h)-g(\lambda_{k-1}+h)\|\\
&\le\lim_{h\to 0+}\sum_{k=1}^{n+1} L 
\rho\Big(\big((1-\lambda_k+h)x+(\lambda_k-h)y\big),
\big((1-\lambda_{k-1}-h)x+(\lambda_{k-1}+h)y\big)\Big)\\
&=\lim_{h\to 0+}\sum_{k=1}^{n+1} L 
\left\|\big((1-\lambda_k+h)x+(\lambda_k-h)y\big)-
\big((1-\lambda_{k-1}-h)x+(\lambda_{k-1}+h)y\big)\right\|\\
&=\sum_{k=1}^{n+1} L 
\left\|z_k-z_{k-1}\right\|=L\|y-x\|\,,
\end{align*}
where we have used the continuity of $f$ and $g$, and that
the intrinsic metric coincides with the Euclidean metric for 
pairs of points for which the connecting line segment has empty intersection
with $\hypsurf$. 

If $s(x,y)\cap \hypsurf$ contains infinitely many points, we can replace $s(x,y)$
by $\gamma$, which is only slightly longer 
than $s(x,y)$,
 but has only finitely
many intersections with $\hypsurf$. A slight modification of the argument 
above then gives that $\|f(y)-f(x)\|<L\|y-x\|+\varepsilon$  for any $\varepsilon>0$, and thus the desired result.
\end{proof}

\begin{conjecture}
 Item \ref{it:conj} of the assumptions of Lemma \ref{th:intrinsic_lip} is not necessary to prove the assertion of the lemma.
\end{conjecture}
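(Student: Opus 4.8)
The plan is to show that the technical assumption (item 3) in Lemma \ref{th:intrinsic_lip} can be dropped, i.e.\ that continuity together with being piecewise Lipschitz already forces global Lipschitz continuity. The obstacle the lemma's proof worked around is that the straight segment $s(x,y)$ from $x$ to $y$ might meet the exceptional hypersurface $\hypsurf$ in a set that is not finite (it could even be a continuum, e.g.\ if a whole subsegment lies in $\hypsurf$), which breaks the telescoping argument over intersection points. So the heart of a proof of the conjecture is to produce, for \emph{any} $x,y$ and any $\eta>0$, a rectifiable curve from $x$ to $y$ of length $<\|x-y\|+\eta$ meeting $\hypsurf$ in a finite (or at least ``small'') set — i.e.\ to prove item 3 outright, which would make the conjecture a trivial corollary.

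First I would exploit that $\hypsurf$ is a $(d-1)$-dimensional submanifold with finitely many components, hence a Lebesgue-null, indeed $(d-1)$-rectifiable, subset of $\R^d$. The natural idea: fix $x,y$, pick a small ball $B$ of radius $r$ around the midpoint with $r$ chosen so that a detour staying in $B$ costs at most $\eta$ in length, and look at the family of ``almost-straight'' polygonal paths from $x$ to $y$ through a point $p\in B$. By a Fubini/co-area argument, for almost every choice of $p$ (w.r.t.\ Lebesgue measure on $B$, or on a suitable sphere/hyperplane transverse to $x-y$) the resulting two segments $s(x,p)$ and $s(p,y)$ each meet $\hypsurf$ in a finite set: indeed, parametrising segments by their second endpoint and using that $\hypsurf$ has $\sigma$-finite $(d-1)$-dimensional Hausdorff measure, the set of directions/endpoints for which the segment has infinite (or positive one-dimensional measure) intersection with $\hypsurf$ is itself Lebesgue-null. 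This is essentially an integralgeometric statement — a line in a ``generic'' direction through a generic point meets a $(d-1)$-rectifiable set in finitely many points — and it is where the real work lies; one should cite or adapt a standard transversality/integral-geometry fact (e.g.\ from Federer or Mattila) rather than reprove it.

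With such a $p$ in hand, the curve $\gamma = s(x,p)\cup s(p,y)$ satisfies $\ell(\gamma)=\|x-p\|+\|p-y\|<\|x-y\|+\eta$ and $\#(\gamma\cap\hypsurf)<\infty$, so assumption 3 of Lemma \ref{th:intrinsic_lip} holds for this pair $x,y$ and this $\eta$; since $x,y,\eta$ were arbitrary, Lemma \ref{th:intrinsic_lip} applies as stated and yields that $f$ is globally Lipschitz with the intrinsic Lipschitz constant. Thus the conclusion of the lemma holds under hypotheses 1 and 2 alone, which is exactly the assertion of the conjecture. An alternative, self-contained route that avoids invoking heavy integral geometry: argue directly along the lines of the ``infinitely many points'' paragraph of the proof of Lemma \ref{th:intrinsic_lip}, approximating the closed set $s(x,y)\cap\hypsurf$ from outside by finitely many intervals whose total length tends to $0$ (possible since $\hypsurf$ meets the line in a set of one-dimensional measure zero — itself a consequence of $\hypsurf$ being a $(d-1)$-manifold, via Sard or a local-chart argument), replacing each such interval by a short circular detour of comparably small length that crosses $\hypsurf$ only finitely often, and passing to the limit to get $\|f(x)-f(y)\|\le L\|x-y\|+\varepsilon$ for every $\varepsilon>0$.

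I expect the main obstacle to be precisely the measure-theoretic/integral-geometric claim that ``most'' straight segments (or most short detours) hit the $(d-1)$-dimensional exceptional set $\hypsurf$ in only finitely many points, or at least in a one-dimensional null set; degenerate configurations where the segment is tangent to or locally contained in $\hypsurf$ have to be excluded by a genericity argument, and making this rigorous in a way that also controls the added length is the delicate part. Everything after that — feeding the good curve into Lemma \ref{th:intrinsic_lip} and letting $\eta\to0$ — is routine.
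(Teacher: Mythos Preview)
First, note that this statement is posed as a \emph{conjecture}: the paper does not prove it and explicitly leaves it open, remarking only that sufficient conditions for item~\ref{it:conj} are supplied later (Lemma~\ref{lem:itconj}, under the extra hypothesis that $\hypsurf$ is $C^3$ and of positive reach). So there is no proof in the paper to compare your attempt against; the question is simply whether your argument settles the conjecture.

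It does not, and the gap is structural rather than technical. Both of your routes try to show that item~\ref{it:conj} is \emph{automatically} satisfied whenever $\hypsurf$ is a hypersurface with finitely many components, and then to invoke Lemma~\ref{th:intrinsic_lip} as stated. But item~\ref{it:conj} can genuinely fail under those hypotheses alone. In $\R^2$ take the spiral
\[
\hypsurf=\Big\{\Big(1-\tfrac{1}{\theta}\Big)(\cos\theta,\sin\theta):\theta>1\Big\},
\]
a single connected $C^\infty$ embedded $1$-submanifold lying in the open unit disk and accumulating on the entire unit circle. Fix $x$ strictly inside the disk and $y$ outside. Then \emph{every} finite-length curve $\gamma$ from $x$ to $y$ meets $\hypsurf$ infinitely often: writing $r(t)=\|\gamma(t)\|$ and letting $\tilde\phi(t)$ be a continuous angular lift (on any arc of $\gamma$ avoiding the origin), the function $\tilde\phi(t)-1/(1-r(t))$ tends to $-\infty$ as $r(t)\to 1^-$, because finite length forces bounded variation of $\tilde\phi$ while $r$ stays bounded away from $0$; each time this function passes through an integer multiple of $2\pi$ the curve meets $\hypsurf$. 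Hence no perturbation whatsoever---neither your two-segment detour through a generic $p$, nor a ``short circular detour'' around an accumulation interval---can reduce the intersection to a finite set. Your integral-geometric claim is therefore not a delicate point to be pinned down with Federer or Mattila; it is false for this $\hypsurf$. Your alternative route has an additional, independent gap: the claim that $s(x,y)\cap\hypsurf$ has one-dimensional measure zero already fails when a sub-segment of $s(x,y)$ lies in $\hypsurf$ (possible for any $d\ge 2$), or in $\R^2$ when $\hypsurf$ is the graph of a $C^\infty$ function whose zero set is a fat Cantor set.

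The upshot is that the conjecture, if true, cannot be proved by deriving item~\ref{it:conj} from items 1 and 2; one would have to argue directly from continuity of $f$ across $\hypsurf$ together with the intrinsic Lipschitz bound on $\R^d\setminus\hypsurf$, without ever producing a curve with only finitely many crossings.
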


We will later give sufficient conditions for item \ref{it:conj} of the assumptions of Lemma \ref{th:intrinsic_lip} to hold, see Lemma \ref{lem:itconj}. These conditions are satisfied in our applications.\\

It is well-known that differentiable functions with bounded derivative
are Lipschitz w.r.t.~the euclidean metric. The same holds true for the
intrinsic metric:

\begin{lemma}\label{lem:diff-lip}
Let $A\subseteq \R^d$ be open and let $f:A\longrightarrow \R^m$ be differentiable with $\|f'\|\le K$.

Then $f$ is intrinsic Lipschitz with constant $K$.
\end{lemma}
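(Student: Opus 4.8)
The plan is to reduce the intrinsic-metric statement to the Euclidean one by approximating, up to arbitrarily small error, the intrinsic distance between two points by the length of a smooth curve lying in $A$, and then integrating the derivative bound along that curve. First I would fix $x,y\in A$ and $\varepsilon>0$, and invoke the definition of the intrinsic metric to pick a continuous curve $\gamma:[0,1]\to A$ with $\gamma(0)=x$, $\gamma(1)=y$ and $\ell(\gamma)<\rho(x,y)+\varepsilon$. The issue is that such a $\gamma$ need only be continuous and rectifiable, so $f\circ\gamma$ is not obviously differentiable and one cannot immediately write $f(y)-f(x)=\int_0^1 (f\circ\gamma)'$. I would circumvent this by the standard device: since $A$ is open and $\gamma([0,1])$ is compact, there is $r>0$ with the $r$-neighbourhood of $\gamma([0,1])$ contained in $A$; then I replace $\gamma$ by a piecewise-linear (or smooth) curve $\tilde\gamma$ lying in this neighbourhood, with the same endpoints, and with $\ell(\tilde\gamma)\le \ell(\gamma)+\varepsilon$. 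Such a $\tilde\gamma$ is obtained by choosing a fine enough partition $0=t_0<\dots<t_N=1$ of $[0,1]$ and joining the points $\gamma(t_i)$ by straight segments; rectifiability of $\gamma$ guarantees the total length is controlled.

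Next I would estimate $\|f(y)-f(x)\|$ along $\tilde\gamma$. On each linear segment $[\gamma(t_{i-1}),\gamma(t_i)]$, which lies in the open set $A$, the function $f$ is differentiable with $\|f'\|\le K$, so by the mean value inequality applied to $s\mapsto f\big((1-s)\gamma(t_{i-1})+s\gamma(t_i)\big)$ we get $\|f(\gamma(t_i))-f(\gamma(t_{i-1}))\|\le K\,\|\gamma(t_i)-\gamma(t_{i-1})\|$. Summing over $i$ and using the triangle inequality,
\[
\|f(y)-f(x)\|\le \sum_{i=1}^N \|f(\gamma(t_i))-f(\gamma(t_{i-1}))\|
\le K\sum_{i=1}^N \|\gamma(t_i)-\gamma(t_{i-1})\|
\le K\,\ell(\tilde\gamma)\le K\big(\rho(x,y)+2\varepsilon\big).
\]
Letting $\varepsilon\to 0$ yields $\|f(y)-f(x)\|\le K\,\rho(x,y)$, which is exactly intrinsic Lipschitz continuity with constant $K$.

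The only genuine obstacle is the first step — passing from an arbitrary continuous competitor curve in the infimum defining $\rho$ to a polygonal curve that still lies inside $A$ and whose length is not much larger. This is where openness of $A$ and compactness of the curve's image are used; for a curve of finite length the polygonal approximation converges in length, and for a curve of infinite length there is nothing to prove since any polygonal path through finitely many of its points already has finite length bounded by what we need (or one simply notes such curves do not contribute to the infimum unless $\rho(x,y)=\infty$, in which case the inequality is trivial). Everything after that is the routine mean-value-inequality argument on line segments, exactly as in the Euclidean case.
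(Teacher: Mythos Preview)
Your proof is correct and follows essentially the same route as the paper's: pick a curve of finite length connecting $x$ and $y$, pass to an inscribed polygonal path whose straight segments lie in $A$, apply the mean-value inequality on each segment, and sum. The paper is simply terser---it asserts ``without loss of generality the $t_k$ can be chosen such that the line segment spanned by $\gamma(t_{k-1})$ and $\gamma(t_k)$ is in $A$'' without spelling out the compactness/openness argument you give---and it bounds by $K\ell(\gamma)$ for an arbitrary competitor curve rather than first choosing a near-minimizer and sending $\varepsilon\to 0$; but these are presentational differences, not different ideas. (One small simplification: since the polygonal length $\sum_i\|\gamma(t_i)-\gamma(t_{i-1})\|$ is automatically $\le \ell(\gamma)$ by the definition of length, you do not actually need the extra $+\varepsilon$ when passing from $\gamma$ to $\tilde\gamma$.)
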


\begin{proof}
Let $x,y\in A$ and let $\gamma$ be a continuous curve of finite length
with $\gamma(0)=x$ and $\gamma(1)=y$. (If no such curve exists we trivially
have $\|f(y)-f(x)\|\le K \rho(x,y)=\infty$.)
Let $0=t_0<\ldots<t_n=1$. Without loss of generality the $t_k$ can 
be chosen such that the line segment spanned by $\gamma(t_{k-1})$ and
$\gamma(t_k)$ is in $A$ for every $k$. Then
\begin{align*}
\|f(y)-f(x)\|
&\le\sum_{k=1}^n \|f(\gamma(t_k))-f(\gamma(t_{k-1}))\|\\
&\le\sum_{k=1}^n \sup_{t\in(t_{k-1},t_k)}\|f'(\gamma(t))\|\,\|\gamma(t_k)-\gamma(t_{k-1})\|\\
&\le K \sum_{k=1}^n  \|\gamma(t_k)-\gamma(t_{k-1})\|\le K \ell(\gamma)\,.
\end{align*}

\end{proof}

Furthermore, we prove that the composition of an intrinsic Lipschitz function with a Lipschitz function is intrinsic Lipschitz:

\begin{lemma}\label{lem:int-comp}
Let $A\subseteq \R^d$ be open. 
Let $g:\R^d\longrightarrow A$ be Lipschitz with constant $L_g$.
Let $f:A\longrightarrow \R^m$ be intrinsic Lipschitz with constant $L_f$.

Then $f\circ g$ is intrinsic Lipschitz with constant $L_f L_g$.
\end{lemma}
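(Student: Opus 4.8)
The plan is to reduce the claim about intrinsic Lipschitz continuity to a statement about lengths of curves, exploiting the fact that a Lipschitz map cannot increase the length of a curve by more than a factor of its Lipschitz constant. The key observation is that although $g$ is only Lipschitz (not differentiable), it still maps rectifiable curves to rectifiable curves with controlled length.

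First I would fix $x,y\in\R^d$ and let $\gamma:[0,1]\longrightarrow\R^d$ be any continuous curve with $\gamma(0)=x$, $\gamma(1)=y$ and finite length $\ell(\gamma)$. Since $g$ is Lipschitz with constant $L_g$, the composition $g\circ\gamma:[0,1]\longrightarrow A$ is a continuous curve from $g(x)$ to $g(y)$, and for any partition $0=t_0<\ldots<t_n=1$ we have
\[
\sum_{k=1}^n \|g(\gamma(t_k))-g(\gamma(t_{k-1}))\|\le L_g\sum_{k=1}^n \|\gamma(t_k)-\gamma(t_{k-1})\|\le L_g\,\ell(\gamma)\,,
\]
so taking the supremum over partitions gives $\ell(g\circ\gamma)\le L_g\,\ell(\gamma)$. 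In particular $g\circ\gamma$ is a rectifiable curve in $A$ joining $g(x)$ to $g(y)$, so by definition of the intrinsic metric $\rho_A$ on $A$ we get $\rho_A(g(x),g(y))\le \ell(g\circ\gamma)\le L_g\,\ell(\gamma)$.

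Next I would take the infimum over all admissible curves $\gamma$ from $x$ to $y$ in $\R^d$: this yields $\rho_A(g(x),g(y))\le L_g\,\rho_{\R^d}(x,y)=L_g\,\|x-y\|$, where the last equality holds because $\R^d$ is convex so its intrinsic metric is the Euclidean one. (If no rectifiable curve from $x$ to $y$ exists the bound is vacuous, but over $\R^d$ the straight segment always works.) Finally, applying the intrinsic Lipschitz property of $f$ with constant $L_f$ to the points $g(x),g(y)\in A$,
\[
\|f(g(x))-f(g(y))\|\le L_f\,\rho_A(g(x),g(y))\le L_f L_g\,\|x-y\|\,,
\]
which even shows $f\circ g$ is Lipschitz in the Euclidean metric with constant $L_fL_g$, hence a fortiori intrinsic Lipschitz with that constant on $\R^d$.

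The only subtle point — and the part I would be most careful about — is the claim that $g\circ\gamma$ is a curve \emph{in $A$} with finite length, so that it is an admissible competitor in the definition of $\rho_A$; this is exactly where we use that $g$ maps into $A$ and that the Lipschitz estimate above controls the variation. Everything else is a routine chaining of the definitions, and no differentiability of $g$ is needed, in contrast to Lemma \ref{lem:diff-lip}.
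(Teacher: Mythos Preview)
Your proof is correct and follows essentially the same approach as the paper: push a curve $\gamma$ forward through $g$ to obtain an admissible curve $g\circ\gamma$ in $A$, use the Lipschitz constant $L_g$ to control its length and hence bound $\rho_A(g(x),g(y))$, and then apply the intrinsic Lipschitz property of $f$. Your write-up is in fact cleaner than the paper's, which introduces an auxiliary partition $\bar t_0<\dots<\bar t_{\bar n}$ and an $\delta$-approximation of $\rho_A(g(x),g(y))$ that your direct use of $\rho_A(g(x),g(y))\le \ell(g\circ\gamma)\le L_g\,\ell(\gamma)$ renders unnecessary; you also explicitly point out that the conclusion is in fact Euclidean Lipschitz (since $\rho_{\R^d}=\|\cdot\|$), which the paper leaves implicit.
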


\begin{proof}
Let $\gamma$ be a continuous curve of finite length with
$\gamma(0)=x$ and $\gamma(1)=y$. (If no such curve exists we trivially
have $\|f(y)-f(x)\|\le L_g \rho(x,y)=\infty$.)
Let 
$0=t_0<\ldots<t_n=1$.
For every $\delta>0$ there are $0=\bar t_0<\ldots<\bar t_{\bar n}=1$ such that
$\rho(g (x),g (y) )<\sum_{k=1}^{\bar n}\|g(\bar t_k)-g(\bar
t_{k-1})\|+\delta/L_f$. So
\begin{align*}
\sum_{k=1}^n\|f\circ g(\gamma(t_k))-f\circ g(\gamma(t_{k-1}))\|
&\le L_f \sum_{k=1}^n\|g(t_k)-g(t_{k-1})\|\\
&\le L_f \rho(g (x),g (y) )\\
&< L_f \left(\sum_{k=1}^{\bar n}\|g(\bar t_k)-g(\bar t_{k-1})\|+\delta/L_f\right)\\
&< L_f \left(L_g\sum_{k=1}^{\bar n}\|\bar t_k-\bar t_{k-1}\|+\delta/L_f\right)\\
&\le L_f L_g \ell(\gamma) + \delta\,.
\end{align*}
Since $\delta>0$ was arbitrary, we obtain the result.
\end{proof}

\subsection{The form of the set of discontinuities}

We are going to generalize the idea of transforming a
discontinuous drift into a Lipschitz one to general dimensions.

For this we assume that the drift coefficient $\mu$ is piecewise Lipschitz
 in the sense of Definition \ref{def:pw-lip}, that is, there 
exists a hypersurface $\hypsurf$ with finitely many components 
such that $\mu|_{\R^d\backslash \hypsurf}$ is intrinsic Lipschitz. 
The assumption on the drift that will make our
method work therefore encompasses assumptions on $\hypsurf$.

\begin{assumption}\label{ass:mu-ddim-sense}\label{ass:hypersurface0}
The drift coefficient $\mu$ is a piecewise Lipschitz function
$\R^d\longrightarrow \R^d$.
Its exceptional set
$\hypsurf$ is a $C^3$ hypersurface.
\end{assumption}

A consequence of Assumption \ref{ass:hypersurface0} is that
locally there exists a $C^2$ orthonormal vector, that is,
for every sufficiently small open and connected  $\subtheta\subseteq\hypsurf$ there exists
an orthonormal vector on $\subtheta$,
i.e.~a $C^2$-function $n:\subtheta\longrightarrow \R^d$ such that
for all $\xi\in \subtheta$ the vector $n(\xi)$ is orthogonal to the tangent
space of $\hypsurf$ in $\xi$ and $\|n(\xi)\|=1$. It is well-known, that there are
in general two possible choices for $n$ and that one can take
$\subtheta=\hypsurf$ only if $\hypsurf$ is orientable. But given $n$ on
$\subtheta$, the only other orthonormal vector is $-n$.

Define the distance $d(x,\hypsurf)$ between a point $x$ and the hypersurface 
$\hypsurf$ in the usual way,
$d(x,\hypsurf):=\inf\{\|x-y\|:y \in \hypsurf\}$.
For every $\varepsilon>0$ we define 
$\hypsurf^\varepsilon:=\{x\in \R^d: d(x,\hypsurf)<\varepsilon\}$.

\begin{assumption}\label{ass:hypersurface1}
There exists $\varepsilon_0>0$ such that 
$\hypsurf^{\varepsilon_0}$ has the {\em unique closest point property}, i.e.~for every $x\in \R^d$
with $d(x,\hypsurf)<\varepsilon_0$ there is a unique $\proj\in \hypsurf$ with
$d(x,\hypsurf)=\|x-\proj\|$.
\end{assumption} 

A set possessing the property described in Assumption \ref{ass:hypersurface1}
is called a {\em set of positive reach}. {\em The reach of a set 
$\hypsurf$ is
the supremum over all $\varepsilon_0>0$ such that 
$\hypsurf^{\varepsilon_0}$ has the {\em unique closest point property}.}
This and the notion of
{\em unique closest point property} 
can be found in \cite{krantz1981}.

\begin{lemma}\label{lem:nd_bounded}
Let $\hypsurf$ be a $C^3$-hypersurface. 

If $\hypsurf$ is of positive reach, then $\|n'\|$ is bounded:
\[
\|n'(\xi)\|\le 2\frac{d-1}{\reach(\hypsurf)}
\]
for all $\xi\in\hypsurf$.
\end{lemma}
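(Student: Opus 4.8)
The quantity $\|n'(\xi)\|$ measures how fast the unit normal field turns, which is controlled by the principal curvatures of $\hypsurf$ at $\xi$. The strategy is to bound each principal curvature in absolute value by $1/\reach(\hypsurf)$ and then combine the (at most $d-1$) of them; the factor $2(d-1)$ then comes from estimating the operator (or Frobenius) norm of the shape operator, which is a symmetric $(d-1)\times(d-1)$ matrix whose eigenvalues are the principal curvatures. So the first step is to recall the differential-geometric identification: for a $C^3$ hypersurface with local $C^2$ unit normal $n$, the derivative $n'(\xi)$ (restricted to the tangent space $T_\xi\hypsurf$, where it acts, since $n$ has constant length so $n'(\xi)$ maps $T_\xi\hypsurf$ into $T_\xi\hypsurf$) is, up to sign, the Weingarten map / shape operator $S_\xi$, whose eigenvalues are the principal curvatures $\kappa_1,\dots,\kappa_{d-1}$.

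The second and crucial step is the geometric bound $|\kappa_i(\xi)|\le 1/\reach(\hypsurf)$ for every principal curvature. This is the standard fact that a set of positive reach $r$ admits, at each boundary point, an interior and an exterior tangent ball of radius $r$; a surface sandwiched between two balls of radius $r$ at a point has all principal curvatures bounded in absolute value by the curvature $1/r$ of those spheres. Concretely: if $p\in\hypsurf$ and $x=p+\varepsilon n(p)$ with $\varepsilon<\reach(\hypsurf)$, then $p$ is the unique nearest point of $\hypsurf$ to $x$, which forces $\hypsurf$ to lie outside the open ball $B(x,\varepsilon)$; letting $\varepsilon\uparrow\reach(\hypsurf)$ and doing the same on the other side gives the two-sided tangent-sphere condition, hence the curvature bound. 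I expect this to be the main obstacle to write cleanly, since it requires either citing the appropriate property of sets of positive reach (Federer's work, or the reference \cite{krantz1981} already in the paper) or giving a short self-contained argument via the nearest-point projection being well-defined and $1$-Lipschitz-type estimates on $\hypsurf^{\varepsilon_0}$.

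The third step is purely algebraic: from $|\kappa_i|\le 1/\reach(\hypsurf)$ for $i=1,\dots,d-1$ we get
\[
\|n'(\xi)\| \le \Big(\sum_{i=1}^{d-1}\kappa_i^2\Big)^{1/2} \le \frac{\sqrt{d-1}}{\reach(\hypsurf)} \le 2\,\frac{d-1}{\reach(\hypsurf)}
\]
if $\|\cdot\|$ denotes the Frobenius norm, and even the cruder bound $\|n'(\xi)\|_{\mathrm{op}}=\max_i|\kappa_i|\le 1/\reach(\hypsurf)\le 2(d-1)/\reach(\hypsurf)$ works for the operator norm; either way the stated inequality follows with room to spare, so one need not be careful about which matrix norm is used or about the precise constant. (The slack factor $2$ in the paper's statement suggests the authors are being deliberately generous, perhaps to absorb the passage to a supremum over $\xi$ or a non-sharp tangent-ball argument.)

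One technical point worth addressing explicitly in the write-up: since the result is stated pointwise for all $\xi\in\hypsurf$ but the $C^2$ normal $n$ only exists on sufficiently small connected pieces $\subtheta\subseteq\hypsurf$, the bound should be proven on such a piece and then noted to be independent of the choice of $\subtheta$ and of the sign of $n$ (replacing $n$ by $-n$ negates $n'$ and leaves $\|n'\|$ unchanged), so it holds everywhere on $\hypsurf$. With the tangent-ball property of positive-reach sets in hand, the remaining steps are routine, so I would spend the bulk of the proof on making that property precise and deriving the principal-curvature bound from it.
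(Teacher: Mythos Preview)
Your proposal is correct and rests on the same geometric input as the paper: the unique closest point property forces, at each $\xi\in\hypsurf$, two-sided tangent balls of any radius $\varepsilon<\reach(\hypsurf)$, and this controls the second-order behaviour of $\hypsurf$ at $\xi$. Where you diverge from the paper is in how you extract a bound on $\|n'(\xi)\|$ from this.

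You invoke the shape operator: $n'(\xi)$ is (up to sign) the Weingarten map, a symmetric endomorphism of $T_\xi\hypsurf$ whose eigenvalues are the principal curvatures $\kappa_1,\dots,\kappa_{d-1}$; the tangent-ball comparison bounds each $|\kappa_i|$ by $1/\reach(\hypsurf)$, and then any matrix norm of $n'(\xi)$ is bounded with room to spare by $2(d-1)/\reach(\hypsurf)$. The paper instead chooses coordinates so that $\hypsurf$ is locally the graph $y_d=\phi(y_1,\dots,y_{d-1})$ with $\phi(0)=0$, $\phi'(0)=0$, computes $(n\circ\psi)'(0)$ explicitly as (essentially) the Hessian $H_\phi(0)$, uses the tangent-ball sandwich to get the pointwise bound $|\phi(y)|\le\|y\|^2/\varepsilon$, and then bounds each entry of $H_\phi(0)$ individually via second-difference quotients, obtaining $|\partial^2\phi/\partial y_j\partial y_k(0)|\le 2/\varepsilon$. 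Summing the $(d-1)^2$ squared entries yields the Frobenius bound $\|n'(\xi)\|\le 2(d-1)/\varepsilon$.

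Your route is cleaner and explains why the constant $2(d-1)$ is generous (indeed $\sqrt{d-1}$, or even $1$ for the operator norm, would do). The paper's route is more hands-on and shows exactly where the slack factor $2(d-1)$ originates: it is the price of bounding the Frobenius norm entrywise rather than spectrally. Both arguments are self-contained once the tangent-ball property of positive-reach sets is stated, and neither requires outside citations beyond that.
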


The proof of Lemma \ref{lem:nd_bounded} can be found in the appendix.\\

Note that one can find examples of hypersurfaces  
with bounded $\|n'\|$ which are not of positive reach,
see Figure \ref{fig:bounded_ndash}.\\

\begin{figure}[h]
\begin{center}
\includegraphics[scale=0.5]{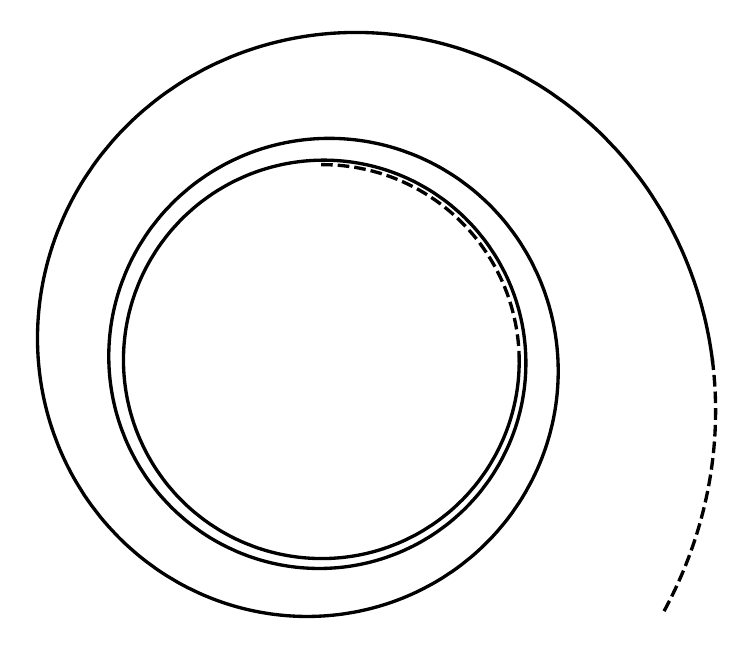}
\end{center}
\caption{A hypersurface in $\R^2$ 
with bounded $\|n'\|$ that is not of positive reach. 
}
\label{fig:bounded_ndash}
\end{figure}

Due to Assumption \ref{ass:hypersurface1} there exists an $\varepsilon_0>0$ 
for which we may define a mapping 
$\proj:\hypsurf^{\varepsilon_0}\longrightarrow \hypsurf$ assigning to each $x$ the point $\proj(x)$ in $\hypsurf$ closest to $x$. \\

\begin{lemma}\label{lem:itconj}
If $\hypsurf$ is a $C^3$-hypersurface that satisfies 
 Assumption
\ref{ass:hypersurface1}, then item \ref{it:conj} of Lemma \ref{th:intrinsic_lip}
is satisfied, i.e.~ for $x,y\in \R^d$ and $\eta>0$ 
there exists a continuous curve $\gamma$ from $x$ to $y$ with $\ell(\gamma)<\|x-y\|+\eta$ such that $\#(\gamma\cap \hypsurf)<\infty$.
\end{lemma}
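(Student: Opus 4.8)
The plan is to keep the straight segment $s(x,y)=\{x+\lambda(y-x):\lambda\in[0,1]\}$ as a backbone and perturb it by an arbitrarily small amount so that the perturbed curve crosses $\hypsurf$ transversally; a curve transversal to a hypersurface meets it only in isolated points, so on a compact parameter interval this will produce only finitely many intersections. We may assume $x\neq y$ (the statement is trivial otherwise). For $w\in\R^d$ I would set
\[
\gamma_w(\lambda):=x+\lambda(y-x)+\lambda(1-\lambda)\,w,\qquad \lambda\in[0,1],
\]
so that $\gamma_w(0)=x$, $\gamma_w(1)=y$, and since $\gamma_w'(\lambda)=(y-x)+(1-2\lambda)w$ with $|1-2\lambda|\le 1$ one gets $\ell(\gamma_w)=\int_0^1\|\gamma_w'(\lambda)\|\,d\lambda\le\|x-y\|+\|w\|$. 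Hence every $\gamma_w$ with $\|w\|<\eta$ already satisfies the length bound, and the whole problem reduces to finding one such $w$ with $\#(\gamma_w\cap\hypsurf)<\infty$.

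For the transversality step I would look at the map $\Gamma\colon(0,1)\times\R^d\to\R^d$, $\Gamma(\lambda,w)=\gamma_w(\lambda)$, which is $C^\infty$. For each fixed $\lambda\in(0,1)$ the partial map $w\mapsto\Gamma(\lambda,w)$ is the affine isomorphism $w\mapsto x+\lambda(y-x)+\lambda(1-\lambda)w$ (here $\lambda(1-\lambda)\neq0$ is essential), so $\Gamma$ is a submersion at every point and in particular transversal to the $C^3$ hypersurface $\hypsurf$. By the parametric transversality theorem there is a Lebesgue-null set $\mathcal N\subseteq\R^d$ such that $\gamma_w|_{(0,1)}$ is transversal to $\hypsurf$ for every $w\notin\mathcal N$; concretely, one observes that $\Gamma^{-1}(\hypsurf)$ is a $C^3$ submanifold of $(0,1)\times\R^d$ of dimension $d$ and applies Sard's theorem to its projection onto the $w$-factor, which is a $C^3$ map between manifolds of equal dimension $d$, so Sard applies. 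For $w\notin\mathcal N$, transversality means that at each $\lambda_0$ with $\gamma_w(\lambda_0)\in\hypsurf$ the velocity $\gamma_w'(\lambda_0)$ is not tangent to $\hypsurf$; writing $\hypsurf$ locally as $\{F=0\}$ for a $C^3$ submersion $F$, the map $\lambda\mapsto F(\gamma_w(\lambda))$ then has nonzero derivative at $\lambda_0$, so $\lambda_0$ is isolated in $\gamma_w^{-1}(\hypsurf)$.

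The final step upgrades ``isolated'' to ``finite''. First I would record that Assumption~\ref{ass:hypersurface1} forces $\hypsurf$ to be closed in $\R^d$: a point of $\overline{\hypsurf}\setminus\hypsurf$ would lie at distance $0<\varepsilon_0$ from $\hypsurf$ with no closest point in $\hypsurf$, contradicting the unique closest point property; hence $\gamma_w^{-1}(\hypsurf)$ is closed in $[0,1]$. Fixing $w\notin\mathcal N$ with $\|w\|<\eta$: if $x\notin\hypsurf$ then $d(x,\hypsurf)>0$ and continuity gives $\delta>0$ with $\gamma_w^{-1}(\hypsurf)\cap[0,\delta)=\emptyset$; if $x\in\hypsurf$, one further discards the affine hyperplane $\{w:(y-x)+w\in T_x\hypsurf\}$ from the admissible $w$'s (still null), so that $\gamma_w'(0)=(y-x)+w$ is not tangent to $\hypsurf$ at $x$ and $\gamma_w^{-1}(\hypsurf)\cap[0,\delta)=\{0\}$ for some $\delta>0$. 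The symmetric argument at $\lambda=1$ (with $\gamma_w'(1)=(y-x)-w$) yields $\delta'>0$. Then $\gamma_w^{-1}(\hypsurf)\cap(0,1)$ is a closed discrete subset of $(0,1)$ contained in the compact interval $[\delta,1-\delta']$ (or empty), hence finite; adding the at most two endpoint contributions shows $\gamma_w^{-1}(\hypsurf)$, and therefore $\gamma_w\cap\hypsurf$, is finite. Taking $\gamma=\gamma_w$ finishes the argument.

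I expect the main obstacle to be the parametric transversality / Sard step that puts the backbone into general position, together with keeping the regularity bookkeeping honest for a merely $C^3$ surface; by contrast the positive-reach hypothesis is used only to know that $\hypsurf$ is closed, which is exactly what turns ``every intersection point is isolated'' into ``finitely many intersection points''. The endpoint bookkeeping at $\lambda=0,1$ is a minor but genuinely necessary piece of care; alternatively one can sidestep it by first joining $x$ and $y$ to nearby points off $\hypsurf$ through two short transversal segments of total length $<\eta/2$ and then perturbing with $\|w\|<\eta/2$.
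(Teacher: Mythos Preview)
Your argument is correct and takes a genuinely different route from the paper. The paper's proof is constructive and rather long: it builds a sequence of polygonal curves $(\gamma_k)_{k\ge 0}$ by iteratively detouring around accumulation points of $s\cap\hypsurf$. At each accumulation point $s_k(t_k)$ the segment is lifted by a small amount $\varepsilon_1$ in the normal direction into the open ball $B_\varepsilon(s_k(t_k)+\varepsilon n_k)$, which is disjoint from $\hypsurf$ by the unique closest point property; three auxiliary propositions (tangency at accumulation points, a local single-crossing statement, and a near-point-off-$\hypsurf$ lemma) control the geometry of each detour. The iteration terminates because every detour shortens the remaining ``uncontrolled'' segment by a definite amount $\alpha\delta-\varepsilon_1>0$, and the accumulated extra length is bounded by a quantity that tends to $0$ with $\delta$. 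Thus the positive-reach hypothesis is used throughout the construction.

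Your approach replaces all of this by a single generic-perturbation step via parametric transversality and Sard's theorem. The regularity bookkeeping is honest: $\Gamma^{-1}(\hypsurf)$ is a $d$-dimensional $C^3$ submanifold of $(0,1)\times\R^d$, the projection to the $w$-factor is then a $C^3$ map between $d$-manifolds, and Sard for equidimensional maps needs only $C^1$. The endpoint discussion is the right piece of care, and your alternative of first moving $x,y$ slightly off $\hypsurf$ is exactly the paper's ``Step~1''. Your argument is considerably shorter and more conceptual, and it reveals something the paper's approach does not: you use Assumption~\ref{ass:hypersurface1} only to conclude that $\hypsurf$ is closed in $\R^d$, so in fact the lemma holds for any closed $C^1$ hypersurface. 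What the paper's approach buys in return is that it is entirely elementary---no Sard, no transversality machinery---and yields an explicit polygonal curve, in keeping with the self-contained style of the article.
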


The rather technical proof of this lemma can be found in the appendix.
Note that for many examples, like a (hyper-)sphere or hyperplane,
 item \ref{it:conj} of 
Lemma \ref{th:intrinsic_lip}
is obviously satisfied. So in these cases there is no need to resort to
Lemma \ref{lem:itconj}. However, it is an interesting fact that this condition
is automatically satisfied under our assumptions on $\hypsurf$.

\subsection{Construction of the transform $G$}
\label{subsec:Constr-ddim}

As before, we construct a transform $G$ with the property that 
the SDE for $G(X)$ has Lipschitz coefficients. 

For this to be well-defined, we make the following assumption:

\begin{assumption}\label{ass:sigma-ddim-sense}
There is a constant $c_0>0$ such that $\|\sigma(\xi)^\top n(\xi)\|\ge c_0$ for all $\xi\in \hypsurf$.
\end{assumption}

\begin{remark}
 Assumption \ref{ass:sigma-ddim-sense} is a {\em non-parallelity condition},
meaning that for all $\xi \in \hypsurf$, $\sigma(\xi)$ must not be parallel to
$\hypsurf$, in the sense that there exists some $x\in\R^d$ such that $\sigma(\xi)x$ is not in the
tangent space of $\hypsurf$ in $\xi$.  \end{remark}

Assumption \ref{ass:sigma-ddim-sense} is by far weaker than uniform
ellipticity. For the practical example we study in Section \ref{sec:Example} it
is satisfied, whereas uniform ellipticity  clearly is not.\\

For defining the transform, we first switch to a local setting. 
Suppose $\tilde
x\in \R^d$ is close to $\hypsurf$, i.e.~$d(\tilde x,\hypsurf)<\varepsilon_0$.
Let $\subtheta\subseteq\hypsurf$ be an open environment of $\proj(x)$ in $\hypsurf$ and 
$n$ an orthonormal vector.
It follows that
the set 
\[
U=\{y_1 n(\xi)+\xi: y_1 \in (-\varepsilon_0,\varepsilon_0),
\xi\in \subtheta\}
\]
is an open environment of $\tilde x$, and every point $x\in U$ can be uniquely
represented in the form $x=y_1 n(\xi)+\xi$,
$y_1\in (-\varepsilon_0,\varepsilon_0)$, $\xi\in \subtheta$.

We are now ready to locally define the transform $G:U\longrightarrow\R^d$ by
\begin{equation}\label{eq:defGhypersurface}
G(x)=
x+\tilde \phi(x) \alpha(\proj(x))\,,
\end{equation}
where $\tilde \phi(x)=(x-\proj(x))\cdot
n(\proj(x))\|x-\proj(x)\|\phi\left(\frac{\|x-\proj(x)\|}{c}\right)$, with $\phi$ (different than in \eqref{eq:bump}) defined by
\begin{align*}
\phi(u)=
\begin{cases}
(1+u)^4(1-u)^4 & \text{if } |u|\le 1\,,\\
0 & \text{else}\,,
\end{cases}
\end{align*}
and where 
 \begin{align}
\label{eq:alphad}\alpha(\xi):=\lim_{h\to 0+}\frac{\mu(\xi-h n(\xi))-\mu(\xi+hn(\xi))}{2 n(\xi)^\top \sigma(\xi)\sigma(\xi)^\top n(\xi)}\,,\qquad\xi\in \subtheta\,.
\end{align}

One important point to note is the following proposition.

\begin{proposition}
The value of the function $G$ does not depend on the choice of the 
orthonormal vector.
\end{proposition}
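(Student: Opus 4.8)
The plan is to show that the formula \eqref{eq:defGhypersurface} is invariant under the substitution $n \mapsto -n$, which is the only other admissible choice of orthonormal vector on a sufficiently small connected $\subtheta$. So write $G_n$ and $G_{-n}$ for the two versions of the transform defined via $n$ and $-n$ respectively, and compare them term by term. The point $\proj(x)$ does not depend on $n$ at all, since it is the metric projection onto $\hypsurf$ (which exists and is unique by Assumption \ref{ass:hypersurface1} on $\hypsurf^{\varepsilon_0}$); likewise $\|x-\proj(x)\|$ and $\phi(\|x-\proj(x)\|/c)$ are manifestly $n$-independent. Hence it suffices to examine the two remaining $n$-dependent factors: the scalar $(x-\proj(x))\cdot n(\proj(x))$ appearing in $\tilde\phi$, and the vector $\alpha(\proj(x))$ defined by \eqref{eq:alphad}.

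First I would observe that replacing $n$ by $-n$ flips the sign of $(x-\proj(x))\cdot n(\proj(x))$, so the factor $\tilde\phi(x)$ picks up exactly one factor of $-1$: $\tilde\phi_{-n}(x) = -\tilde\phi_n(x)$. Next I would examine $\alpha$. In the numerator of \eqref{eq:alphad}, replacing $n(\xi)$ by $-n(\xi)$ swaps the roles of the two one-sided limits, turning $\mu(\xi - h n(\xi)) - \mu(\xi + h n(\xi))$ into its negative. In the denominator, $n(\xi)^\top \sigma(\xi)\sigma(\xi)^\top n(\xi) = \|\sigma(\xi)^\top n(\xi)\|^2$ is a quadratic form in $n(\xi)$ and is therefore unchanged by the sign flip (it is also strictly positive by Assumption \ref{ass:sigma-ddim-sense}, so the quotient is well-defined). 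Consequently $\alpha_{-n}(\xi) = -\alpha_n(\xi)$, i.e.\ $\alpha$ also picks up exactly one factor of $-1$.

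Putting the two observations together: the correction term $\tilde\phi(x)\,\alpha(\proj(x))$ acquires the product of two sign flips, $(-1)\cdot(-1) = 1$, and is therefore unchanged. Hence $G_{-n}(x) = x + \tilde\phi_{-n}(x)\,\alpha_{-n}(\proj(x)) = x + \tilde\phi_n(x)\,\alpha_n(\proj(x)) = G_n(x)$ for every $x \in U$, which proves the claim. The argument is entirely elementary once the right decomposition is in place; the only genuine point to be careful about is that the two one-sided limits in \eqref{eq:alphad} are genuinely interchanged (not merely reindexed) under $n \mapsto -n$, and that the denominator is a true quadratic form rather than something linear in $n$ — both of which are immediate from the formulas. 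There is no serious obstacle here; the proposition is really just a consistency check guaranteeing that the globally defined transform in the next subsection is well posed independently of local orientation choices.
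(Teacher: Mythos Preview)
Your proof is correct and follows exactly the same approach as the paper: both $\tilde\phi(x)$ and $\alpha(\proj(x))$ change sign under $n\mapsto -n$, so their product is invariant. The paper's version is terser, simply asserting the two sign flips without spelling out which factors are $n$-independent and why the denominator of $\alpha$ is quadratic in $n$; your more explicit bookkeeping is a faithful elaboration of the same argument.
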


\begin{proof}
Both $\alpha(\proj(x))$ and $\tilde\phi(x)$ depend on the
parametrization only through the direction of the normal vector $n(\proj(x))$.
But from the definitions of $\tilde\phi$ and $\alpha$  
we see that if $n(\proj(x))$ is replaced by $-n(\proj(x))$, then
$\tilde\phi(x)$ and $\alpha(\proj(x))$ both change sign. Therefore, $\tilde
\phi(x)\alpha(\proj(x))$ does not depend on the particular choice
of the orthonormal vector.
\end{proof}

The only reason why we defined $G$ locally at first was that for a
non-orientable hypersurface we do not have, by definition, a global orthonormal
vector. However, since the value of the locally defined function $G$ does not
depend on the particular choice of the orthonormal vector, we can use the same
equations \eqref{eq:defGhypersurface} and \eqref{eq:alphad} for defining $G$
globally on $\hypsurf^{\varepsilon_0}$. That is, the function $G:\R^d\longrightarrow
\R^d$, 
\[
G(x)=\begin{cases}
 x+\tilde \phi(x) \alpha(\proj(x))&  x\in \hypsurf^{\varepsilon_0}\,,\\
x & x\in \R^d\backslash \hypsurf^{\varepsilon_0}
\end{cases}
\]
is well-defined. Note further that, if we require $c\le\varepsilon_0$, then from
 $d(x,\hypsurf)>\varepsilon_0$ it follows that
$d(x,\hypsurf)>c$ and therefore $\phi(\frac{\|x-p(x)\|}{c})=0$ with a 
$C^2$-smooth paste to $0$ in all points $x$ satisfying $d(x,\theta)=c$.

\subsection{Properties of $G$}

We need to prove the following:
\begin{enumerate}
\item $c$ can be chosen in a way such that  $G$ is a diffeomorphism
 $\R^d\longrightarrow \R^d$;
\item It\^o's formula holds for $G^{-1}$;
\item the SDE for $G(X)$ has Lipschitz coefficients.
\end{enumerate}

\begin{assumption}\label{ass:alpha-ddim}
There is a constant $a$ such that every locally defined function
$\alpha$ as defined in \eqref{eq:alphad} is $C^3$ and all derivatives up to
order 3 are bounded by $a$.  
\end{assumption}

\begin{theorem}\label{thm:Ginv}
Let Assumptions \ref{ass:hypersurface0}--\ref{ass:alpha-ddim} be satisfied.
If the constant $c>0$ appearing in the definition of $\tilde\phi$ is
sufficiently small, then $G$ is a diffeomorphism $\R^d\longrightarrow\R^d$.
\end{theorem}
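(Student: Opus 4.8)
The plan is to verify the hypotheses of Hadamard's global inverse function theorem: if $G:\R^d\to\R^d$ is $C^1$, its Jacobian $DG(x)$ is invertible everywhere, and $G$ is proper (equivalently, $\|G(x)\|\to\infty$ as $\|x\|\to\infty$), then $G$ is a $C^1$-diffeomorphism onto $\R^d$. Since $G(x)=x$ outside $\hypsurf^{\varepsilon_0}$ and, more to the point, $G(x)=x$ whenever $d(x,\hypsurf)\ge c$, properness is immediate: $G$ differs from the identity only on a set on which $\|G(x)-x\|$ is bounded (it equals $|\tilde\phi(x)|\,\|\alpha(\proj(x))\|\le c^2 a$ or similar), so $\|G(x)\|\ge\|x\|-c^2 a\to\infty$. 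So the real content is showing $DG(x)$ is invertible for all $x$, for $c$ small.

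The key step is a local computation of $DG$ near a point $\xi_0\in\hypsurf$. I would fix a small orientable patch $\subtheta$, a $C^2$ unit normal $n$, and use the coordinates $x=y_1 n(\xi)+\xi$ with $\xi=\proj(x)\in\subtheta$, $y_1=(x-\proj(x))\cdot n(\proj(x))$; note $y_1$ is the signed distance, $|y_1|=\|x-\proj(x)\|$. In these coordinates $\tilde\phi(x)=y_1|y_1|\,\phi(|y_1|/c)$ depends only on $y_1$, and $G(x)=x+\tilde\phi(x)\,\alpha(\proj(x))$. Differentiating, $DG = \id + \alpha(\proj(x))\otimes\nabla\tilde\phi(x) + \tilde\phi(x)\,D[\alpha\circ\proj](x)$. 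One then estimates the two perturbation terms. For the second term, $|\tilde\phi(x)|\le c^2\,\max\phi$ and $\|D[\alpha\circ\proj]\|$ is bounded uniformly — here I would invoke Assumption \ref{ass:alpha-ddim} (boundedness of the derivatives of $\alpha$) together with the fact that $\proj:\hypsurf^{\varepsilon_0}\to\hypsurf$ is $C^1$ with bounded derivative, which follows from positive reach and Lemma \ref{lem:nd_bounded} (the bound on $\|n'\|$ controls the curvature, hence the Lipschitz constant of $\proj$). So this term has operator norm $\le C_1 c^2$. For the first (rank-one) term, the crucial observation is that $\nabla\tilde\phi$ is essentially parallel to $n$ up to a lower-order contribution: $\nabla\tilde\phi(x) = \bigl(|y_1|\phi(|y_1|/c)+y_1\,\partial_{y_1}(|y_1|\phi(|y_1|/c))\bigr)\,\nabla y_1 + (\text{terms}\propto\tilde\phi)$, and $\nabla y_1 = n(\proj(x)) + O(y_1)$. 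Along $\hypsurf$ itself ($y_1=0$) all these coefficients vanish — $\tilde\phi$ vanishes to second order in $y_1$ — so $DG(\xi_0)=\id$; and for $|y_1|\le c$ the coefficient of $n$ in $\nabla\tilde\phi$ is $O(c)$ while the remaining pieces are $O(c^2)$. Hence $DG(x) = \id + v(x)\otimes n(\proj(x)) + E(x)$ with $\|v\|\le C_2 c$, $\|E\|\le C_3 c^2$.

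The main obstacle — and the place where the hypothesis $\|\sigma^\top n\|\ge c_0$ (Assumption \ref{ass:sigma-ddim-sense}) was secretly used in defining $\alpha$, but is \emph{not} needed for invertibility — is showing the rank-one perturbation $\id + v\otimes n$ is invertible uniformly. By the matrix determinant lemma, $\det(\id+v\otimes n)=1+v^\top n$, so invertibility requires $v^\top n\ne -1$; since $\|v\|\le C_2 c$, for $c<1/(2C_2)$ we get $|v^\top n|\le C_2 c<1/2$, so $\det(\id+v\otimes n)\ge 1/2$ and moreover $\|(\id+v\otimes n)^{-1}\|\le 2$ (Sherman–Morrison). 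Then $DG = (\id+v\otimes n)(\id + (\id+v\otimes n)^{-1}E)$, and $\|(\id+v\otimes n)^{-1}E\|\le 2C_3 c^2<1/2$ for $c$ small, so the second factor is invertible by Neumann series and $DG(x)$ is invertible for every $x$, with a uniform bound on $\|DG(x)^{-1}\|$. Choosing $c$ smaller than all the finitely many thresholds that appeared (and $\le\varepsilon_0$, and $\le\frac1{6|\alpha|}$-type bounds inherited from the scalar construction), Hadamard's theorem applies and $G$ is a diffeomorphism of $\R^d$. I expect the genuinely delicate bookkeeping to be the claim that $\proj$ and $n\circ\proj$ have bounded $C^1$-norms on $\hypsurf^{\varepsilon_0}$ uniformly in the (possibly non-orientable, non-compact) hypersurface — this is where positive reach and Lemma \ref{lem:nd_bounded} are indispensable, and I would treat it as the technical heart of the argument, likely deferring the most tedious estimates to the appendix as the paper does for the neighboring geometric lemmas.
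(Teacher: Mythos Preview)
Your approach is correct and matches the paper's: both verify the hypotheses of Hadamard's global inverse function theorem, with properness immediate (since $G-\id$ is bounded) and the substance in showing that $DG(x)=\id+O(c)$ uniformly on $\hypsurf^c$, hence invertible via a Neumann-series argument. The only differences are organizational---the paper derives the explicit formula $G'=\id+\bar\phi'\,\alpha\,n^\top+\bar\phi\,\alpha'\,\mm\,(\id-nn^\top)$ (with $\mm=(\idt+y_1 n')^{-1}$, whose boundedness is exactly your ``$Dp$ bounded'' step) by computing in a local parametrization $\param$ and then bounds the whole perturbation at once in a separate lemma, whereas you reach the same decomposition by an intrinsic product-rule computation and treat the rank-one piece separately via Sherman--Morrison before absorbing the $O(c^2)$ remainder.
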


For proving Theorem \ref{thm:Ginv} we first need to prove two technical lemmas.
For every $\xi\in \hypsurf$, denote by $\tang(\xi)$ the tangent space of $\hypsurf$ in $\xi$.

\begin{lemma}
For $\xi\in \hypsurf$, $n'$ is a linear mapping from $\tang(\xi)$ into $\tang(\xi)$.
\end{lemma}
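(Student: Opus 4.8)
The plan is to establish two things about the Gauss-type map $n':\tang(\xi)\to\R^d$: first, that it maps $\tang(\xi)$ into itself, and second, that it is linear. Linearity is the easy part: $n$ is a $C^2$ map defined on (an open piece of) the hypersurface $\hypsurf$, so $n'(\xi)$ is by definition the differential of $n$ at $\xi$, which is a linear map on the tangent space $\tang(\xi)$ by construction. So the substance of the lemma is the claim that the image of $n'(\xi)$ lies in $\tang(\xi)$ rather than pointing out of it.

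For the self-map property, first I would use that $\|n(\xi)\|=1$ for all $\xi\in\hypsurf$, i.e.\ $n(\xi)^\top n(\xi)\equiv 1$ along $\hypsurf$. Differentiating this identity in a tangent direction $v\in\tang(\xi)$ — say along a $C^1$ curve $\gamma$ in $\hypsurf$ with $\gamma(0)=\xi$, $\gamma'(0)=v$ — yields $2\,n(\xi)^\top n'(\xi)v = 0$, hence $n'(\xi)v$ is orthogonal to $n(\xi)$. Since $\R^d = \tang(\xi)\oplus \R\,n(\xi)$ (the normal line is the orthogonal complement of the tangent space, because $\hypsurf$ is a $(d-1)$-dimensional submanifold and $n(\xi)$ is the unit normal), being orthogonal to $n(\xi)$ is exactly the statement that $n'(\xi)v\in\tang(\xi)$. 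Thus $n'(\xi)$ maps $\tang(\xi)$ into $\tang(\xi)$, and combined with linearity this is the assertion.

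The only genuinely delicate point is making the differentiation of $n(\xi)^\top n(\xi)=1$ rigorous in the manifold setting: one should pick a local $C^2$ parametrization of $\hypsurf$ near $\xi$ (which exists since $\hypsurf$ is $C^3$), express $n$ in those coordinates, and differentiate; or, more cleanly, just note that the restriction of $n$ to any $C^1$ curve through $\xi$ with velocity $v$ is $C^1$ and apply the chain rule as above. I do not expect a real obstacle here — this is the standard fact that the shape operator (Weingarten map) of a hypersurface is a self-adjoint endomorphism of the tangent space; the lemma as stated only asks for the endomorphism part, and self-adjointness will presumably be used (if at all) in a later step. I would write the argument in the one-curve form to keep it short and coordinate-free.
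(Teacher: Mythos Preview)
Your argument is correct and essentially identical to the paper's: the paper also notes that $n'(\xi)$ is by definition a linear map $\tang(\xi)\to\R^d$, then differentiates $\|n(\gamma(t))\|^2\equiv 1$ along a curve $\gamma$ in $\hypsurf$ with $\gamma(0)=\xi$, $\gamma'(0)=b$ to obtain $n(\xi)\cdot(n'(\xi)b)=0$, i.e.\ $n'(\xi)b\in\tang(\xi)$. Your remarks about parametrizations and self-adjointness are fine but go beyond what either the statement or the paper's proof require.
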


\begin{proof}
$n'$ is by definition a linear mapping 
$\tang(\xi)\longrightarrow \R^d$.
Furthermore, we have $\|n\|=1$, so that for any curve $\gamma$ in
$\hypsurf$ 
\[
0=\frac{d}{dt}\|n(\gamma(t))\|^2=2 n(\gamma(t))\cdot \big(n'(\gamma(t))\gamma'(t)\big)\,.
\]
If $b\in \tang(\xi)$, we can find a curve $\gamma$ in $\hypsurf$  
such that $\gamma(0)=\xi$ and $\gamma'(0)=b$. Thus, 
$n(\xi)\cdot( n'(\xi)b)=0$, i.e.~$n'(\xi)b\in \tang(\xi)$.
\end{proof}

\begin{remark}\label{rem:invert}
If $\hypsurf$ is $C^3$ and of positive reach $\varepsilon_0$
then we may choose $0<\varepsilon<\varepsilon_0$ such that, 
whenever $y_1\in \R$ with $|y_1|<\varepsilon$, then
$\idt+y_1 n'(\xi)$ is invertible.

Indeed, let $K$ be a bound on $\|n'\|$ and let 
$\varepsilon=\frac{\varepsilon_0}{\varkappa \max(K,1)}$ for some fixed
$\varkappa>1$. Then for $|y_1|< \varepsilon$ we have 
$\|y_1 n'(\xi)\|=|y_1| \|n'(\xi)\|<\frac{1}{\varkappa }<1$,
such that $\idt+y_1 n'(\xi)$ is invertible 
by the subsequent well-known Lemma \ref{th:neumann}.
\end{remark}

\begin{lemma}\label{th:neumann}
Let $\opA$ be a linear operator on a subspace $V\subseteq\R^d$ and let $\opA$ have (operator) norm smaller than $1$.

Then $\idV+\opA$ is invertible and $\|(\idV+\opA)^{-1}\|\le(1-\|\opA\|)^{-1}$.
\end{lemma}

\begin{proof}
Consider the Neumann series
$\opB= \sum_{k=0}^\infty (-\opA)^k $, which converges in operator norm and 
satisfies $\|\opB\|\le (1-\|\opA\|)^{-1}$. Then

\[
(\idV+\opA)\opB
=\sum_{k=0}^\infty (-\opA)^k-\sum_{k=0}^\infty (-\opA)(-\opA)^k
=\sum_{k=0}^\infty (-\opA)^k-\sum_{k=1}^\infty (-\opA)^k=\idV\,.
\]
Thus, $\opB$ is the inverse of $\idV+\opA$.
\end{proof}

\begin{proof}[Proof of Theorem \ref{thm:Ginv}]

Fix some $\varkappa>1$\label{pg:kappa} and set
$\varepsilon=\frac{\varepsilon_0}{\varkappa \max(K,1)}$, where $K$ is a bound on $\|n'\|$, 
which exists by Lemma \ref{lem:nd_bounded}.

Let $0<c<\varepsilon$.

For $\tilde x\notin \hypsurf^c$, differentiability of $G$ in $\tilde x$
is obvious.

For $\tilde x\in \hypsurf^c$ choose an open subset $\subtheta$ of $\hypsurf$ (as before)
and an orthonormal vector $n$ such that $U\subset\R^d$ is an open set with
$U\cap \hypsurf=\subtheta$ and every $x\in U$ can uniquely be written in the form $x=y_1 n(\xi)+\xi$
with  $\xi=\proj(x)$. 
$\hypsurf$ can be parametrized locally by a one-one mapping 
$\psi: R\longrightarrow \R^d$,
where $R\subseteq \R^{d-1}$ is an open rectangle in $\R^{d-1}$, and
there is a point $(\tilde y_2,\dots,\tilde y_d)\in R$ such that 
$\psi(\tilde y_2,\dots,\tilde y_d)=\proj(\tilde x)$. By making $R$ and/or
$\subtheta$ smaller, if necessary, we may w.l.o.g. assume
that $\subtheta=\psi(R)$.

Thus, we have a bijective mapping $\param: (-\varepsilon,\varepsilon)\times R\longrightarrow U$,
\[
\param(y_1,\ldots,y_d):=y_1 n(\psi(y_2,\ldots,y_d))+\psi(y_2,\ldots,y_d)\,,\quad
y\in 
 (-\varepsilon,\varepsilon)\times R\,.
\]
Note that $\proj(\param(y))=\psi(y_2,\ldots,y_d)$ for all $y\in (-\varepsilon,\varepsilon)\times R$.

We have 
\begin{align*}
G\circ \param(y)&
=y_1 n(\psi(y_2,\ldots,y_d))+\psi(y_2,\ldots,y_d)
+y_1|y_1|\phi\left(\frac{|y_1|}{c}\right)\alpha(\psi(y_2,\dots,y_d))\\
&=y_1 n(\psi(y_2,\ldots,y_d))+\psi(y_2,\ldots,y_d)
+\bar\phi(y_1)\alpha(\psi(y_2,\dots,y_d))
\,,
\end{align*}
where $\bar \phi = y |y| \phi \left(\frac{y}{c}\right)$,
and thus
\begin{align*}
\frac{\partial (G\circ \param)}{\partial y_1}(y)&=n(\psi(y_2,\ldots,y_d))+\bar \phi'(y_1)\alpha(\psi(y_2,\dots,y_d))\,,\\ \text{and}\quad
\frac{\partial (G\circ \param)}{\partial y_j}(y)&=y_1\frac{\partial(n\circ \psi)}{\partial y_j}(y_2,\ldots,y_d)+\frac{\partial \psi}{\partial y_j}(y_2,\ldots,y_d)+\bar\phi(y_1)\frac{\partial(\alpha\circ\psi)}{\partial y_j}(y_2,\ldots,y_d)\,.
\end{align*}
Now note that 
\begin{align*}
\frac{\partial (G\circ \param)}{\partial y_1}(y)&=
G'(\param(y)) \frac{\partial \param}{\partial y_1}(y)=G'(\param(y))n(\psi(y_2,\ldots,y_d))\,,\\
\text{and}\quad
\frac{\partial (G\circ \param)}{\partial y_j}(y)&=G'(\param(y))\frac{\partial \param}{\partial y_j}(y)=
G'(\param(y))\left(
y_1\frac{\partial(n\circ \psi)}{\partial y_j}(y_2,\ldots,y_d)+\frac{\partial \psi}{\partial y_j}(y_2,\ldots,y_d)
\right)
\end{align*}
for all $j\ne 1$. Further,
\begin{align*}
\frac{\partial(n\circ \psi)}{\partial y_j}(y_2,\ldots,y_d)&=n'(\psi(y_2,\ldots,y_d))\frac{\partial \psi}{\partial y_j}(y_2,\ldots,y_d)\,,\\
\text{and}\qquad\frac{\partial(\alpha\circ \psi)}{\partial y_j}(y_2,\ldots,y_d)&=\alpha'(\psi(y_2,\ldots,y_d))\frac{\partial \psi}{\partial y_j}(y_2,\ldots,y_d)\,.
\end{align*}
Recall that for any $\xi\in \hypsurf$, we have that $n'(\xi)$ and $\alpha'(\xi)$ are 
linear mappings from the tangent space of $\hypsurf$ in $\xi$ into the $\R^d$.
For $\xi=\psi(y_2,\dots,y_d)$  
it then follows that 
\[
G'(\param(y))\left(
\idt+y_1 n'(\xi) 
\right)\frac{\partial \psi}{\partial y_j}(y_2,\ldots,y_d)
=\left(\idt+y_1n'(\xi)+\bar \phi(y_1)\alpha'(\xi)\right)\frac{\partial \psi}{\partial y_j}(y_2,\ldots,y_d)\,.
\]
Since this equation holds for all $\frac{\partial \psi}{\partial y_j}$, $j=2,\dots,d$, it also holds for every vector $b$ in 
the tangent space, i.e.
\[
G'(\param(y))\left(
\idt+y_1 n'(\xi) 
\right)\vv
=\left(\idt+y_1n'(\xi)+\bar \phi(y_1)\alpha'(\xi)\right)\vv\,.
\]
For $|y_1|\le \varepsilon$, the 
mapping $\idt+y_1n'(\xi)$ is invertible by the argument from Remark 
\ref{rem:invert}. Denote the
inverse of $\idt+y_1n'(\xi)$ by $\mm(y)$. 

Then for any $b\in \tang(\xi)$ we can write
$\vv=\big(\idt+y_1n'(\xi)\big)\vv_1$ 
with $\vv_1=\mm(y) b$ and therefore
\[
G'(\param(y))\vv
=\vv+\bar \phi(y_1)\alpha'(\xi)\mm(y)
\vv\,.
\]
For a general vector $\vv\in \R^d$ we have that $(\vv\cdot n)n=n n^\top \vv$
is orthogonal to the tangent space and $(\id-nn^\top)\vv$ is in the tangent space.

We abbreviate $G'=G'(\tilde x)$, $\proj=\proj(\tilde x)$, 
$d=\|\tilde x-\proj(\tilde x)\|$, $n=n(\proj(\tilde x))$, 
$n'=n'(\proj(\tilde x))$, $\mm=\mm(\param^{-1}(\tilde x))$.
Then we have for $\vv\in \R^d$
\begin{align*}
G' \vv &= G' \Big((\vv\cdot n)n+(\vv-(\vv\cdot n)n)\Big)\\
&= (\vv\cdot n)G' n+G'(\vv-(\vv\cdot n)n)\\
&=(\vv\cdot n)(n+\bar\phi'(d)\alpha(\proj))+(\vv-(\vv\cdot n)n)+\bar\phi(d)\alpha'(p)\mm (\vv-(\vv\cdot n)n)\\
&=\vv+\bar\phi'(d)\alpha(\proj)n^\top\vv+\bar\phi(d)\alpha'(p)\mm(\id-nn^\top)\vv\,.
\end{align*}
Therefore,
\[
G'=\id +\bar\phi'(d)\alpha(\proj)n^\top+\bar\phi(d)\alpha'(p)\mm(\id-nn^\top)\,,
\]
or, more explicitly,
\begin{align}\label{eqn:neumannG}
G'(\tilde x)=&\id +\bar\phi'(\|\tilde x-\proj(\tilde x)\|)\alpha(\proj(\tilde x))n(\proj(\tilde x))^\top\nonumber\\
&+\bar\phi(\|\tilde x-\proj(\tilde x)\|)\alpha'(\proj(\tilde x))\mm(\param^{-1}(\tilde x))(\id-n(\proj(\tilde x))n(\proj(\tilde x))^\top)\,.
\end{align}

In order to apply Hadamard's global inverse function theorem \cite[Theorem 2.2]{ruzhansky2015}
and thus to show
that $G$ is a diffeomorphism $\R^d\longrightarrow\R^d$, we need to show
that $G$ is $C^1$, $G'(x)$ is invertible for all $x\in \R^d$, and 
$\lim_{\|x\|\to \infty} \|G(x)\|=\infty$.

We have already proven differentiability of $G$ in $\tilde x$.
If $c$ is sufficiently small, $G'(\tilde x)$ is invertible, since 
$\bar\phi'$ and $\bar\phi$ are uniformly bounded with a bound that tends
to 0 for $c \to 0$.   For $c$ small enough it is therefore guaranteed that
$G'(\tilde x)$ is close to the identity and therefore invertible by 
Lemma \ref{th:neumann}.
We show in the separate Lemma \ref{lem:c-ddim} that $c>0$ can be chosen uniformly for all $\tilde x$ such that $G'(\tilde x)$ is invertible.

Since $G(x)=x+\bar\phi(x)\alpha(x)$ and both $\bar\phi$ and $\alpha$ are
bounded by the definition of $\bar\phi$ and Assumption \ref{ass:alpha-ddim}, we also have the third requirement of Hadamard's global inverse
function theorem. $G$ is therefore a diffeomorphism.
\end{proof}

We will see that $c$ can always be chosen sufficiently small in the proof of Theorem \ref{thm:Ginv}.

\begin{lemma}\label{lem:c-ddim}    
Assume Assumptions \ref{ass:mu-ddim-sense} -- \ref{ass:alpha-ddim}. 
Fix  $\varkappa>1$
and let 
\[
c_0:=\min\left(1,\tfrac{\varepsilon_0}{\varkappa\max(K,1)},\left(1+\frac{d}{3}  \sup_{\xi\in\hypsurf}\left(\max_{1\le i\le d}|\alpha_i(\xi)|+\frac{d}{4} \frac{\varkappa}{\varkappa-1}\max_{1\le i,j\le d}\left|\tfrac{\partial \alpha_i(\xi)}{\partial x_j}\right| \right)\right)^{-1}\right)\,.
\]
Then for every choice of $c\in (0,c_0)$ we have that $G'(x)$ is
invertible for every $x\in \R^d$.
\end{lemma}

\begin{proof}
Note that $c_0>0$. 

Let $x\in \R^d$ and recall equation \eqref{eqn:neumannG} from the proof of Theorem \ref{thm:Ginv}\begin{align*}
 G'(x)&=\id + \bar \phi'(\|x-p(x)\|) \alpha(p(x)) n(p(x))^\top\\ &+ \bar \phi(\|x-p(x)\|) \alpha'(p(x)) \mm(\param^{-1}(x))(\id-n(p(x)) n(p(x))^\top)=:1+\opA(x)\,.
\end{align*}
We begin by estimating the operator norm of $\opA(x)$ for given $c\in (0,c_0)$.
\begin{align*}
\|\opA(x)\| 
&\le \| \bar \phi'(\|x-p(x)\|)\|\  d \max_{1\le i\le d}  |\alpha_i(p(x))|  \\
&+\bar \phi(\|x-p(x)\|)  \|\mm\| \|\id-n(p(x)) n(p(x))^\top\|\ d^2\! \max_{1\le i,j\le d}\left|\frac{\partial \alpha_i(p(x))}{\partial x_j}\right|
\\
&\le \frac{cd}{3} \max_{1\le i\le d} |\alpha_i(p(x))|+\frac{c^2d^2}{12} \frac{1}{1-|y_1| \|n'\|}\max_{1\le i,j\le d}\left|\frac{\partial \alpha_i(p(x))}{\partial x_j}\right| \,,
\end{align*}
where we used that $\|\bar \phi'(\|x-p(x)\|)\| \le \frac{c}{3}$ and
$|\bar \phi(\|x-p(x)\|)|\le \frac{c^2}{12}$ for $x\in \hypsurf^c$ (by estimating the maxima), and that
$\|\id-n(p(x)) n(p(x))^\top\|\le 1$.
Furthermore $\|\mm\| \le \frac{1}{1-|y_1| \|n'\|}$, since $\|y_1 n'\|
<\frac{1}{\varkappa}<1$ by $c<\frac{\varepsilon_0}{\varkappa \max(K,1)}$, 
Lemma \ref{th:neumann} and Remark \ref{rem:invert}.  Hence 
\[
 \frac{1}{1-|y_1| \|n'\|} \le \frac{\varkappa}{\varkappa-1}\,.
\]
Therefore $\|\opA(x)\| \le \frac{cd}{3}  \left(\max_{1\le i\le d}|\alpha_i(p(x))|+\frac{cd}{4} \frac{\varkappa}{\varkappa-1}\max_{1\le i,j\le d}\left|\frac{\partial \alpha_i(p(x))}{\partial x_j}\right| \right)$.

We want $c$ small enough to have 
\(\|\opA(x)\| <1\, \)
and to that end we choose $c<1$ and \[
c<\left(1+\frac{d}{3}  \left(\max_{1\le i\le d}|\alpha_i(p(x))|+\frac{d}{4}
\frac{\varkappa}{\varkappa-1}\max_{1\le i,j\le d}\left|\frac{\partial
\alpha_i(p(x))}{\partial x_j}\right| \right)\right)^{-1}\,.\]
Hence  
$G'(x)$ is invertible
 for $x\in \hypsurf^c$ by Lemma \ref{th:neumann}.
For $ x\in \R^d\backslash \hypsurf^c$, $G'(x)=\id$.
\end{proof}

W.l.o.g. we always choose $c$ like in Lemma \ref{lem:c-ddim}.\\

We proceed with proving that, although $G \notin C^2$, It\^o's formula holds for
$G$ and $G^{-1}$.

\begin{theorem}\label{th:ItoGi}
Let Assumptions \ref{ass:hypersurface0}--\ref{ass:alpha-ddim} be satisfied.

Then It\^o's formula holds for $G$ and $G^{-1}$.
\end{theorem}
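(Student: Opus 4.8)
The plan is to show that $G$ and $G^{-1}$ are of the regularity class for which the classical Itô formula applies to functions that are merely $C^1$ with the first derivative being absolutely continuous with locally bounded (weak) second derivative — or, more precisely, that they are $C^1$ and piecewise $C^2$ with the only ``defect'' occurring on the sphere-like set $\{\|x-p(x)\|=c\}$, across which $G$ pastes in a $C^2$ fashion to the identity, so that in fact $G\in C^2(\R^d\setminus\hypsurf)$ and $G$ is $C^1$ with bounded second-order difference quotients globally. The standard reference for this is the version of Itô's formula for $C^1$ functions whose derivative is locally Lipschitz in the relevant coordinates, exactly as was invoked in the one-dimensional case via \cite[5.~Problem 7.3]{karatzas1991}; so the main content of the proof is to verify the hypotheses of that theorem for the multidimensional $G$.

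\medskip

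First I would establish that $G\in C^1(\R^d)$ with a globally bounded, Lipschitz-continuous derivative. Differentiability and the explicit formula for $G'$ are already in hand from equation \eqref{eqn:neumannG} in the proof of Theorem \ref{thm:Ginv}; what remains is to note that the entries of $G'$ are built from $\bar\phi$, $\bar\phi'$, $\alpha\circ p$, $\alpha'\circ p$, $n\circ p$, and $\mm\circ\param^{-1}$. By Assumption \ref{ass:alpha-ddim} the functions $\alpha,\alpha'$ are $C^3$ with bounded derivatives; since $\hypsurf$ is a $C^3$ hypersurface of positive reach, $p$ is $C^2$ on $\hypsurf^{\varepsilon_0}$ and $n\circ p$ is $C^2$ there (this is the standard regularity of the nearest-point projection onto a positive-reach $C^k$ hypersurface, $k\ge 2$); and $\mm=(\,\idt+y_1 n')^{-1}$ depends smoothly on $y_1$ and on $n'$ by the Neumann-series Lemma \ref{th:neumann}. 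Hence every ingredient of $G'$ is $C^1$ on $\hypsurf^{\varepsilon_0}\setminus\hypsurf$; on the complement of $\hypsurf^{c}$ we have $G'=\id$; and because $\bar\phi,\bar\phi',\bar\phi''$ all vanish on $\{\|x-p(x)\|=c\}$ (property \ref{it:phi3}.\ of $\phi$), the two pieces match up to first order there, giving $G\in C^1(\R^d)$. Boundedness of $G'$ and of its difference quotients follows from the same bounds, uniformly in $x$; in particular $G$ is globally Lipschitz.

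\medskip

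Next I would deal with $G^{-1}$. By Theorem \ref{thm:Ginv}, $G$ is a diffeomorphism $\R^d\to\R^d$, so $G^{-1}$ exists and $(G^{-1})' = (G'\circ G^{-1})^{-1}$; since $G'$ is invertible with $\|(G'(x))^{-1}\|$ bounded (again Lemma \ref{th:neumann}, as $G' = \id + \opA$ with $\|\opA\|<1$ uniformly, from Lemma \ref{lem:c-ddim}), it follows that $G^{-1}\in C^1(\R^d)$ with bounded, Lipschitz derivative, and $G^{-1}$ is itself globally Lipschitz. Moreover $G^{-1}(z)=z$ outside $G(\hypsurf^{c})$, and $G^{-1}$ is $C^2$ on $G(\hypsurf^{\varepsilon_0})\setminus G(\hypsurf)$ because $G$ is. So $G$ and $G^{-1}$ share the same structural description: $C^1$ on all of $\R^d$, $C^2$ off the image of $\hypsurf$, with the derivative Lipschitz globally.

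\medskip

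Finally I would quote the extension of Itô's formula to this regularity class. The version I have in mind is the one used already in the $1$-dimensional argument: if $F\in C^1(\R^d)$ and $F'$ is locally Lipschitz (equivalently, $F$ has locally bounded weak second derivatives), then for any continuous semimartingale $X$ the process $F(X)$ is again a semimartingale and the usual Itô expansion holds with the second-order term $\frac12\sum_{i,j}\partial_{ij}F(X)\,d[X^i,X^j]$, where $\partial_{ij}F$ is taken in the weak sense (it is defined Lebesgue-a.e., and the occupation-time / local-time argument shows the exceptional set $\hypsurf$ is not charged — here one uses that $X$ spends zero Lebesgue-time on the $(d-1)$-dimensional set $\hypsurf$, which in turn follows from the non-degeneracy of $\sigma$ transverse to $\hypsurf$, Assumption \ref{ass:sigma-ddim-sense}, together with the co-area/occupation-density estimate for the one-dimensional signed distance component $y_1 = (X-p(X))\cdot n(p(X))$). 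Applying this to $F=G$ and to $F=G^{-1}$ gives the claim.

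\medskip

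\textbf{The main obstacle} I expect is precisely the last point: rigorously justifying that the set of discontinuities of $G''$ (and $(G^{-1})''$), namely $\hypsurf$ together with the ``gluing sphere'' $\{\|x-p(x)\|=c\}$, is negligible for the process $X$. The gluing sphere is harmless because $\bar\phi''$ is continuous there (it vanishes with a $C^2$ paste), so $G$ is genuinely $C^2$ across it — there is really only one bad set, $\hypsurf$ itself. To handle $\hypsurf$ one reduces, via the local coordinates $\param$, to controlling the time the real-valued process $y_1(t)=(X_t - p(X_t))\cdot n(p(X_t))$ spends at $0$; since this process has a nontrivial martingale part (its quadratic variation density is $\|\sigma(X)^\top n(p(X))\|^2\ge c_0^2 - o(1)$ near $\hypsurf$, by Assumption \ref{ass:sigma-ddim-sense}), it has a.s. zero occupation time at the single point $0$, so the second-order term is unaffected by redefining $G''$ arbitrarily on $\hypsurf$. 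Everything else — the existence of bounded weak second derivatives, the $C^1$-regularity, the semimartingale property of $G(X)$ and $G^{-1}(Z)$ — is routine bookkeeping with the formulas already derived.
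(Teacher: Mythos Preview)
Your proposal is correct and arrives at the same conclusion, but the structure differs from the paper's argument in one notable way. You pursue a \emph{global} route: establish that $G$ (and hence $G^{-1}$) is $C^1$ with globally Lipschitz derivative, hence lies in $W^{2,\infty}_{\mathrm{loc}}$, and then invoke a Sobolev-type It\^o formula together with an occupation-time argument for the signed-distance coordinate $y_1$ to show that the set $\hypsurf$ of discontinuities of $G''$ is not charged. The paper instead takes a \emph{local} route: it introduces the coordinate chart $\param:(-\varepsilon,\varepsilon)\times R\to U$ which straightens $\hypsurf$ to the hyperplane $\{y_1=0\}$, observes that $\param,\param^{-1}\in C^2$ so that It\^o's formula is unproblematic for them, and then notes that $\tilde G:=\param^{-1}\circ G\circ \param$ has all second partials continuous except $\partial^2\tilde G/\partial y_1^2$, which is bounded. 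For this precise configuration the authors cite their own earlier result \cite[Theorem~2.9]{sz14}, which packages exactly the occupation-time / one-dimensional Tanaka-type argument you sketch in your final paragraph. The same device, applied to $\tilde G^{-1}$, gives It\^o's formula for $G^{-1}$.

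So the essential analytic content---that the only defect of $G''$ is a bounded jump across $\hypsurf$ in the normal-normal direction, and that the non-degeneracy of $\sigma$ transverse to $\hypsurf$ makes this invisible to the stochastic integral---is identical in both arguments. What the paper's approach buys is a clean reduction to a previously-proved black box; what yours buys is a more self-contained sketch, at the cost of needing to be careful about which multidimensional It\^o formula for $W^{2,\infty}_{\mathrm{loc}}$ functions you are actually invoking (the one-dimensional reference \cite[5.~Problem~7.3]{karatzas1991} does not directly cover the multidimensional case, and the general Sobolev--It\^o statement in $\R^d$ typically needs an extra hypothesis---density of $X_t$, or a co-area estimate---which is precisely the content of your final paragraph and of \cite[Theorem~2.9]{sz14}).
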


\begin{proof}
If $x\in \R^d\backslash \hypsurf$, then since $G,G^{-1} \in C^2$ on $\R^d\backslash \hypsurf$, It\^o's formula holds for
$G$ and $G^{-1}$ until the first
time $X$ hits $\hypsurf$. So the only interesting case is $x\in \hypsurf$.

For this, there exists an open rectangle $R\in \R^{d-1}$ and a local
parametrization $\psi:R\longrightarrow \R^{d}$ of $\hypsurf$. Let $\subtheta=\psi(R)$. Moreover, 
\[
U=\{y_1
n(\psi(y_2,\ldots,y_d))+\psi(y_2,\ldots,y_d):y_1\in (-\varepsilon,\varepsilon), (y_2,\ldots,y_d)\in R\}\,.
\]
Let $\param:(-\varepsilon,\varepsilon)\times R\longrightarrow U$ be defined
as in the proof of Theorem \ref{thm:Ginv}.
Note that $\param\in C^2$, because $\hypsurf$ is $C^3$ by Assumption \ref{ass:hypersurface0}, so It\^o's formula holds
for $\param$. 
$\param$ is locally invertible with $\det \param'\ne 0$, so $\param^{-1}\in C^2$ as well. 
If we can show that It\^o's formula holds for $G\circ \param$, then it
also holds  for $G=G\circ \param \circ \param^{-1}$. 

$G\circ \param$ fits the assumptions of \cite[Theorem  2.9]{sz14} 
(we get boundedness of the derivatives by localizing to a bounded domain),
so It\^o's formula holds for $G\circ \param$, and therefore also for $G$.\\

$\tilde G=\param^{-1}\circ G\circ \param$ is a function with continuous first and second 
derivatives, with the sole exception of $\frac{\partial^2 \tilde G}{\partial
y_1^2}$, which is bounded, but may be discontinuous for $y_1=0$.  
Since $\det \tilde G'\ne 0$ on an environment of $x$,  this property 
transfers to the inverse, which is $\tilde G^{-1}=\param^{-1}\circ G^{-1}\circ \param$. 
Thus, again by \cite[Theorem  2.9]{sz14}, It\^o's formula holds for 
$\tilde G^{-1}$, and a fortiori for $G^{-1}$.
\end{proof}

Now we are ready to show that the coefficients of the transformed SDE for $G(X)$ are Lipschitz.

\begin{assumption}\label{ass:mutilde-lipschitz}
We assume the following for $\mu$ and $\sigma$:
\begin{enumerate}
\item\label{it:mutilde-lipschitz1} the diffusion coefficient $\sigma$ is Lipschitz;
\item\label{it:mutilde-lipschitz2} $\mu$ and $\sigma$ are bounded on $\hypsurf^\varepsilon$.
\end{enumerate}
\end{assumption}

\begin{assumption}\label{ass:C4n}
The exceptional set $\hypsurf$  of $\mu$ is $C^4$.
Every unit normal vector $n$ of $\hypsurf$ has bounded second and third derivative.
\end{assumption}

\setcounter{newlemma}{25}
\begin{newlemma}\label{lem1}
Assume Assumptions \ref{ass:hypersurface0}, \ref{ass:hypersurface1}, and \ref{ass:C4n}.
Let $c\in (0,\varepsilon_0)$. 

Then the function $\tilde \phi\colon\Theta^c\setminus\Theta\to \R$
with $\tilde \phi(x)=(x-\proj(x))\cdot n(\proj(x))\|x-\proj(x)\|\phi\left(\frac{\|x-\proj(x)\|}{c}\right)$
is three times differentiable with bounded first, second, and third derivative.
\end{newlemma}

\begin{proof}
For $x\in \hypsurf^c\setminus \hypsurf$ we have $(x-\proj(x))\cdot
n(\proj(x))\|x-\proj(x)\|=s\, d(x,\hypsurf)^2$ with $s\in \{-1,1\}$. 
By \cite[Corollary 4.5]{dud
}, $d(\cdot,\hypsurf)$ is $C^4$ on $\hypsurf^c\setminus \hypsurf$.

Since $p'(x)$ maps into the tangent space of $\Theta$ in $p(x)$ it holds that
$(x-p(x))^\top p'(x)=0$.  Thus we have
$(d(x,\Theta)^2)'=(\|x-p(x)\|^2)'=2(x-p(x))^\top (\id-p'(x))=2(x-p(x))^\top$.
Note that $(x-p(x))^\top$ is bounded by $c$ on $\hypsurf^c$.

The function $p\colon\Theta^c\to \Theta$ is $C^3$ by Assumptions
\ref{ass:hypersurface0}, \ref{ass:hypersurface1}, and \ref{ass:C4n} and
\cite[Theorem 4.1]{dudekholly}.  

By
Assumptions \ref{ass:hypersurface0}, \ref{ass:hypersurface1} and Lemma
\ref{lem:nd_bounded} the first derivative of every unit normal vector $n$ is
bounded, and by Assumption \ref{ass:C4n} the second and third derivative of
$n$ are bounded. 
Now \cite[Corollary 4]{leostein2018} implies that $p'$, $p''$, and $p'''$ are bounded on $\Theta^c$.

Now it follows from the chain and product rule 
that the function $x\mapsto d(x,\hypsurf)^2$ and its 
derivatives up to order 4 are bounded on $\Theta^c\setminus \Theta$.


Note further that 
\[\phi\left(\frac{\|x-\proj(x)\|}{c}\right)
=\begin{cases}
\left(1-\tfrac{d(x,\hypsurf)^2}{c^2}\right)^4& d(x,\hypsurf)<c\,,\\
0& \text{ else}\,.
\end{cases}
\]
In total, by the chain and product rule, the first three derivatives of  $\tilde \phi$ are bounded.
\end{proof}

\begin{newlemma}\label{lem2}
Assume Assumptions \ref{ass:hypersurface0}, \ref{ass:hypersurface1}, \ref{ass:alpha-ddim}, and \ref{ass:C4n}.
Let $c\in (0,\varepsilon_0)$. 

Then the function $\alpha\circ p\colon\Theta^c\setminus\Theta\to \R^d$
is three times differentiable with bounded first, second, and third 
derivative.
\end{newlemma}
\begin{proof}
By Assumption \ref{ass:alpha-ddim}, $\alpha$ is three times differentiable with 
bounded first, second, and third derivative. 
As shown in the proof of Lemma \ref{lem1}, $p\colon\Theta^c\to \Theta$ is $C^3$ and $p'$, $p''$, and $p'''$ are bounded on $\Theta^c$.  
The chain and product rules now assure that $(\alpha\circ p)'$, $(\alpha\circ p)''$, $(\alpha\circ p)'''$
are bounded.
\end{proof}

\begin{newlemma}\label{lem3}
Let Assumptions \ref{ass:hypersurface0}--\ref{ass:C4n} be satisfied.
Then $G''$ is bounded and it is differentiable with bounded derivative on $\hypsurf^c\backslash \hypsurf$.
\end{newlemma}

\begin{proof}
A sufficient condition for this is, by the definition
of $G$ and the product rule, that the functions $x\mapsto \tilde \phi(x)$
and $x\mapsto \alpha(p(x))$ have this property.
This is guaranteed by Lemmas \ref{lem1} and \ref{lem2}.
\end{proof}

\begin{theorem}\label{th:SDElip-surf}
Let Assumptions \ref{ass:hypersurface0}--\ref{ass:C4n} be satisfied.

Then the SDE for $G(X)$ has Lipschitz coefficients.
\end{theorem}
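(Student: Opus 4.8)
The plan is to express the transformed drift and diffusion coefficients explicitly by applying It\^o's formula to $Z=G(X)$, and then to check that each summand is Lipschitz. By Theorem~\ref{th:ItoGi} It\^o's formula applies to $G$, so $Z=G(X)$ satisfies an SDE with drift $\tilde\mu(z)=\cL G(G^{-1}(z))$ and diffusion $\tilde\sigma(z)=G'(G^{-1}(z))\sigma(G^{-1}(z))$, where $\cL$ denotes the generator of the original SDE, i.e.\ $\cL G(x)=G'(x)\mu(x)+\tfrac12\sum_{i,j}(\sigma\sigma^\top)_{ij}(x)\,\partial_i\partial_j G(x)$. Since $G^{-1}$ is globally Lipschitz (being the inverse of a diffeomorphism that equals the identity outside the bounded-width tube $\hypsurf^{c}$, with $G'$ close to the identity there by Lemma~\ref{lem:c-ddim}), Lemma~\ref{lem:int-comp} and the stability of the Lipschitz property under composition reduce everything to showing that $x\mapsto\cL G(x)$ and $x\mapsto G'(x)\sigma(x)$ are (intrinsic) Lipschitz on $\R^d$, and then invoking Lemma~\ref{th:intrinsic_lip} together with Lemma~\ref{lem:itconj}.

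First I would treat the diffusion coefficient. From formula~\eqref{eqn:neumannG}, $G'(x)=\id+\bar\phi'(\|x-p(x)\|)\alpha(p(x))n(p(x))^\top+\bar\phi(\|x-p(x)\|)\alpha'(p(x))\mm(\param^{-1}(x))(\id-n(p(x))n(p(x))^\top)$, which is supported in $\hypsurf^{c}$. Here $x\mapsto p(x)$, $x\mapsto n(p(x))$, $x\mapsto\alpha(p(x))$, $x\mapsto\alpha'(p(x))$ and $x\mapsto\mm(\param^{-1}(x))$ are all Lipschitz on $\hypsurf^{c}$: $p$ is Lipschitz on a set of positive reach, $n\in C^2$ and $\alpha,\alpha'$ are $C^3$ with bounded derivatives by Assumptions~\ref{ass:hypersurface0} and~\ref{ass:alpha-ddim} (hence Lipschitz after composition with the Lipschitz map $p$, using Lemma~\ref{lem:diff-lip}), and $\mm=(\idt+y_1n'(\xi))^{-1}$ is Lipschitz in $(y_1,\xi)$ by the Neumann-series bound of Lemma~\ref{th:neumann} together with the $C^2$-regularity of $n$. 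The scalar factors $r\mapsto\bar\phi(r)$, $r\mapsto\bar\phi'(r)$ are $C^1$ with bounded derivative (this is exactly the analogue of Lemma~\ref{th:lip1}), and $x\mapsto\|x-p(x)\|=d(x,\hypsurf)$ is $1$-Lipschitz. A product of bounded Lipschitz functions is Lipschitz, so $G'$ is Lipschitz; since $\sigma$ is Lipschitz by Assumption~\ref{ass:mutilde-lipschitz}\ref{it:mutilde-lipschitz1} and bounded on $\hypsurf^{\varepsilon}$ by~\ref{it:mutilde-lipschitz2} (and $G'=\id$ outside the tube so $G'\sigma$ differs from $\sigma$ only there, where $\sigma$ is bounded), the product $G'\sigma$ is Lipschitz, and composing with $G^{-1}$ gives that $\tilde\sigma$ is Lipschitz.

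Next I would handle the drift. Write $\cL G(x)=\mu(x)+\big(G'(x)-\id\big)\mu(x)+\tfrac12\sum_{ij}(\sigma\sigma^\top)_{ij}(x)\,\partial_i\partial_j G(x)$. The term $\big(G'(x)-\id\big)\mu(x)$ is supported in $\hypsurf^{c}$, where $\mu$ is bounded; and as $d(x,\hypsurf)\to0$ the factor $\bar\phi'(\|x-p(x)\|)$ tends to $0$, so this product is continuous across $\hypsurf$ and, being a product of a bounded Lipschitz factor (the part of $G'-\id$) with the bounded piecewise-Lipschitz $\mu$, it is piecewise Lipschitz with exceptional set $\hypsurf$ and continuous, hence Lipschitz by Lemma~\ref{th:intrinsic_lip}. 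For the second-order term, $\partial_i\partial_j G$ is bounded and supported in $\hypsurf^{c}$; the only component that fails to be continuous across $\hypsurf$ is the one coming from $\bar\phi''(\|x-p(x)\|)$ times the normal-normal second derivative, which by the very choice~\eqref{eq:alphad} of $\alpha$ --- precisely, $\alpha(\xi)=\lim_{h\to0}\frac{\mu(\xi-hn)-\mu(\xi+hn)}{2\,n^\top\sigma\sigma^\top n}$ --- cancels the jump of $\mu$ in the normal direction, exactly as in the one-dimensional computation leading to $\tilde\mu$ continuous. Thus $\mu+\tfrac12\sum_{ij}(\sigma\sigma^\top)_{ij}\partial_i\partial_j G$ is continuous on $\R^d$ and piecewise Lipschitz (the $\partial_i\partial_j G$ are products of bounded Lipschitz functions as in the diffusion case, $\sigma\sigma^\top$ is Lipschitz and bounded on the tube), so it is Lipschitz by Lemma~\ref{th:intrinsic_lip}. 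Summing the three Lipschitz pieces and composing with the Lipschitz $G^{-1}$ yields that $\tilde\mu$ is Lipschitz.

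The main obstacle, and the place where care is needed, is verifying continuity of $\cL G$ across $\hypsurf$: one must check that the jump of the first-order term $G'(x)\mu(x)$ in the normal direction is $O(\bar\phi'(d))\to0$ (so it contributes nothing), that the tangential contributions involving $\mm$ and $\alpha'$ paste together $C^0$-smoothly because $\bar\phi$ vanishes to sufficient order, and that the normal second derivative $\partial^2 G/\partial y_1^2$ --- the only genuinely discontinuous derivative, by the analysis in the proof of Theorem~\ref{th:ItoGi} --- produces exactly the jump $-2\alpha(p(x))\,n^\top\sigma\sigma^\top n$ needed to annihilate $\mu(\xi+)-\mu(\xi-)$ along $n$. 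Once continuity is established, hypothesis~\ref{it:conj} of Lemma~\ref{th:intrinsic_lip} is supplied by Lemma~\ref{lem:itconj} under Assumption~\ref{ass:hypersurface1}, and the Lipschitz conclusion for both $\tilde\mu$ and $\tilde\sigma$ follows. A secondary bookkeeping point is that all the ``bounded'' qualifiers used above are legitimate: the tube $\hypsurf^{c}$ has width $2c$, $\mu$ and $\sigma$ are bounded on it by Assumption~\ref{ass:mutilde-lipschitz}\ref{it:mutilde-lipschitz2}, and $\alpha,\alpha',n,n'$ are globally bounded by Assumptions~\ref{ass:hypersurface0}, \ref{ass:alpha-ddim} and Lemma~\ref{lem:nd_bounded}, so every product estimated above is indeed a product of globally bounded Lipschitz functions.
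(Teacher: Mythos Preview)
Your proposal is correct and follows the same overall strategy as the paper: write $\tilde\mu,\tilde\sigma$ via It\^o's formula, show they are piecewise Lipschitz with exceptional set $\hypsurf$ and continuous across $\hypsurf$, and then invoke Lemma~\ref{th:intrinsic_lip} together with Lemma~\ref{lem:itconj}. The difference lies in how the continuity of $\tilde\mu$ across $\hypsurf$ is verified.

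The paper does this computation entirely in local normal--tangential coordinates: it introduces $Y=\param^{-1}(X)$, computes the drift $\tilde\nu$ of $\paraminv\circ G\circ\param(Y)$, and checks directly from equations \eqref{eq:nudiff} and \eqref{eq:dy12} that the jump of $\nu$ is exactly compensated by the jump of $\tfrac12\frac{\partial^2}{\partial y_1^2}(\paraminv\circ G\circ\param)(\omega\omega^\top)_{11}$, after first identifying $(\omega\omega^\top)_{11}=n^\top\sigma\sigma^\top n$. Only then is continuity transported back to $\tilde\mu$ via composition with $\param,\paraminv\in C^2$. Your route instead stays in Cartesian coordinates and argues structurally: $G'-\id$ vanishes at $\hypsurf$ because $\bar\phi,\bar\phi'$ do, so the first-order term $G'\mu$ has the same jump as $\mu$; and the only discontinuous piece of the Hessian is the normal--normal part $2\,\mathrm{sign}(y_1)\,n n^\top\alpha$, whose contraction with $\tfrac12\sigma\sigma^\top$ produces $\pm(n^\top\sigma\sigma^\top n)\alpha$, cancelling the jump of $\mu$ by \eqref{eq:alphad}. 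Your approach is shorter and makes the role of the definition of $\alpha$ more transparent; the paper's local-coordinate computation is longer but leaves no room for doubt about which second derivatives are actually discontinuous and why only the $(1,1)$-entry matters.

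One point where your write-up is slightly loose: you say ``$\mm$ is Lipschitz in $(y_1,\xi)$'', but $\mm(y)$ is an operator on the \emph{varying} subspace $\tau(\xi)$, so this statement needs interpretation. What you actually need, and what is true, is that the globally defined $\R^{d\times d}$-valued map $x\mapsto\alpha'(p(x))\,\mm(\param^{-1}(x))\,(\id-n(p(x))n(p(x))^\top)$ is Lipschitz on $\hypsurf^c$; this follows from the Neumann bound of Lemma~\ref{th:neumann} and the $C^2$-regularity of $n$, but it is worth stating it that way rather than for $\mm$ in isolation. With that caveat, your argument goes through.
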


\begin{proof}
We first show that the drift of $G(X)$ is
continuous in $\hypsurf$.
Let $B$, $R$, $\psi$, and $\param$ be defined as in the proof of 
Theorem \ref{thm:Ginv}. 
Suppose now, we have a locally defined process $X$ in $U$.
Then there exists a locally defined process $Y$ in $(-\varepsilon,\varepsilon)\times R$ with
\[
X=Y_1 n(\psi(Y_2,\ldots,Y_d))+\psi(Y_2,\ldots,Y_d)\,,
\]
i.e.~$X=\param(Y)$.
 
If $Y$ is a locally defined solution to 
\(dY=\nu(Y)dt+\omega(Y) dW\), then by It\^o's formula
\[
dX=\param'(Y)\nu(Y)dt + \param'(Y) \omega(Y) dW + \frac{1}{2}\tr(\omega^\top(Y) \param''(Y) \omega(Y))dt\,,
\]
where $\param'$ and $\param''$ denote the Jacobian and the Hessian of $\param$,
and $\tr$ denotes the trace of a matrix. We want $\param'\omega=\sigma$, or more precisely $\param'(Y)\omega(Y)=\sigma(\param(Y))$, i.e.~$\omega=(\param')^{-1}\sigma$. 
For brevity write $\paraminv=\param^{-1}$. Now
\begin{align*}
(\omega \omega^\top)_{1,1}&=\omega_{1,1}^2+\dots+\omega_{1,d}^2=e_1^\top \omega \omega^\top e_1
=e_1^\top \left(\paraminv'\sigma\sigma^\top\left(\paraminv'\right)^\top\right)e_1\,.
\end{align*}
We show that $\left(\paraminv'\right)^\top e_1=n$.
It is not hard to see that the Jacobian $\param'$ of $\param$ in a point $\xi \in \hypsurf$ is given by
\[
\param'=\left(n, \frac{\partial \psi}{\partial y_2},\dots,\frac{\partial \psi}{\partial y_d}\right)\,,
\]
such that 
\begin{align*}
e_1^\top (\param')^{-1}=e_1^\top\left((\param')^{-1}\right)=n^\top\;
\Longleftrightarrow\; e_1^\top=n^\top \param' =n^\top\left(n, \frac{\partial \psi}{\partial y_2},\dots,\frac{\partial \psi}{\partial y_d}\right)  = (\|n\|^2,0,\dots,0)=e_1^\top\,.
\end{align*}
Therefore we have 
$\omega_{1,1}^2+\dots+\omega_{1,d}^2=n^\top\sigma\sigma^\top n$ on $\hypsurf$.\\

The drift coefficient $\nu$ of the SDE for $Y$ has only discontinuities 
in the set $\{y\in \R^d:y_1=0\}$.
Further,
\[
dY=d(\paraminv(X))=\paraminv'(X)\mu(X)dt+\paraminv'(X)\sigma(X)dW+\frac{1}{2}\tr\left(\sigma^\top(X) \paraminv''(X) \sigma(X)\right)dt\,,
\]
i.e.~$\nu(y)=\paraminv'(\param(y))\mu(\param(y))+\frac{1}{2}\tr\left(\sigma^\top(\param(y)) \paraminv''(\param(y)) \sigma(\param(y))\right)$. The second term is continuous, so that 
\begin{align}
\lefteqn{\lim_{h\to 0+}(\nu(-h,y_2,\ldots,y_d)-\nu(h,y_2,\ldots,y_d)) }\nonumber\\
&=\paraminv'(\param(0,y_2,\ldots,y_d))\lim_{h\to 0+}\left(\mu(\param(-h,y_2,\ldots,y_d))-\mu(\param(h,y_2,\ldots,y_d))\right)\nonumber{}\\
&=\paraminv'(\param(0,y_2,\ldots,y_d))\lim_{h\to 0+}\Big(\mu\big(\param(0,y_2,\ldots,y_d)-hn(\param(0,y_2,\ldots,y_d))\big)\nonumber{}\\
&\hspace{13em}-\mu\big(\param(0,y_2,\ldots,y_d)+hn(\param(0,y_2,\ldots,y_d))\big)\Big)\nonumber\\
&=\paraminv'(\param(0,y_2,\ldots,y_d))2\alpha(\param(0,y_2,\ldots,y_d))(n^\top\sigma\sigma^\top n)(\param(0,y_2,\ldots,y_d))\nonumber{}\\
&=\paraminv'(\param(0,y_2,\ldots,y_d))2\alpha(\param(0,y_2,\ldots,y_d))(\omega\omega^\top)_{11}(0,y_2,\ldots,y_d)\label{eq:nudiff}\,.
\end{align}
Consider 
\begin{align*}
(G\circ \param)(y)&=\param(y)+\tilde \phi(\param(y))\alpha(\proj(\param(y)))\\
& = y_1 n(\param(0,y_2,\ldots,y_d))+\param(0,y_2,\ldots,y_d)+
y_1|y_1|\phi\left(\frac{y_1}{c}\right)\alpha\left(\param(0,y_2,\ldots,y_d)\right)\,,
\end{align*}
and  
\begin{align*}
(\paraminv\circ G\circ \param)(y)
&=\paraminv\left( y_1 n(\param(0,y_2,\ldots,y_d))+\param(0,y_2,\ldots,y_d)+
y_1|y_1|\phi\left(\frac{y_1}{c}\right)\alpha\left(\param(0,y_2,\ldots,y_d)\right)\right)\,.
\end{align*}
Differentiation yields
\begin{align*}
\frac{\partial}{\partial y_1}(\paraminv\circ G\circ \param)(y)
&=\paraminv'\left((G\circ \param)(y) \right)\frac{\partial}{\partial y_1}\Big( y_1 n(\param(0,y_2,\ldots,y_d))+\param(0,y_2,\ldots,y_d)\\
&\hspace{12em}+ y_1|y_1|\phi\left(\frac{y_1}{c}\right)\alpha\left(\param(0,y_2,\ldots,y_d)\right)\Big) \\
&=\paraminv'\left((G\circ \param)(y) \right)\Big(  n(\param(0,y_2,\ldots,y_d))\\
&\hspace{9em}+
\Big(2|y_1|\phi\left(\frac{y_1}{c}\right)+
y_1|y_1|\phi'\left(\frac{y_1}{c}\right)\frac{1}{c}\Big)\alpha\left(\param(0,y_2,\ldots,y_d)\right)\Big)\,.
\end{align*}
We look at the second derivative w.r.t. $y_1$:
\begin{align*}
\frac{\partial^2}{\partial y_1^2}(\paraminv\circ G\circ \param)(y)
&=\text{something continuous}+\paraminv'\left((G\circ \param)(y) \right)\left(  2\sign(y_1)\phi\left(\frac{y_1}{c}\right)\alpha\left(\param(0,y_2,\ldots,y_d)\right)\right)\,.
\end{align*}
Since $G(x)=x$ for $x\in \hypsurf$, we have that $G(\param(y))=\param(y)$ for
$y_1=0$, and thus 
\begin{align}
\lefteqn{\lim_{h\to 0+}\left(\frac{\partial^2}{\partial y_1^2}(\paraminv\circ G\circ \param)(-h,y_2,\ldots,y_d)
-\frac{\partial^2}{\partial y_1^2}(\paraminv\circ G\circ \param)(h,y_2,\ldots,y_d)\right)}\nonumber{}\\
&\qquad=-4\paraminv'\left((G\circ \param)(0,y_2,\ldots,y_d) \right) \alpha\left(\param(0,y_2,\ldots,y_d)\right)\nonumber\\
&\qquad=-4\paraminv'\left(\param(0,y_2,\ldots,y_d) \right) \alpha\left(\param(0,y_2,\ldots,y_d)\right)\,.\hspace{10em}
\label{eq:dy12}
\end{align}
Consider the drift coefficient of $(\paraminv\circ G\circ \param)_k(Y)$, which is
\begin{align*}
\tilde \nu_k(y):=\sum_{j=1}^d \frac{\partial}{\partial y_j}(\paraminv\circ G\circ \param)_k(y) \nu_j(y) 
+\frac{1}{2}\sum_{i,j=1}^d\frac{\partial^2}{\partial y_i\partial y_j}(\paraminv\circ G\circ \param)_k(y)\sum_{l=1}^d\omega_{li}(y)\omega_{lj}(y)\,.
\end{align*}
$(\paraminv\circ G\circ \param)'(0,y_2,\dots,y_d)=\id$, thus 
$\frac{\partial}{\partial y_j}(\paraminv\circ G\circ \param)_k(0,y_2,\dots,y_d)=(e_k)_j$.
Further, note that $\frac{\partial^2}{\partial y_i\partial y_j}(\paraminv\circ
G\circ \param)_k$ is continuous for all pairs $(i,j)$ except $(i,j)=(1,1)$.

Thus, using \eqref{eq:nudiff} and \eqref{eq:dy12},  we have
\begin{align*}
\lefteqn{\lim_{h\to 0+} \left(\tilde \nu_k(-h,y_2,\dots,y_d)
-\tilde \nu_k(h,y_2,\dots,y_d)\right)}\\
&=\lim_{h\to 0+}\Big(
\nu_k(-h,y_2,\dots,y_d)+\frac{1}{2}\frac{\partial^2}{\partial y_1^2}(\paraminv\circ G\circ \param)_k(-h,y_2,\dots,y_d)(\omega\omega^\top)_{11}(0,y_2,\dots,y_d)\\
&\qquad\qquad-\nu_k(h,y_2,\dots,y_d)-\frac{1}{2}\frac{\partial^2}{\partial y_1^2}(\paraminv\circ G\circ \param)_k(h,y_2,\dots,y_d)(\omega\omega^\top)_{11}(0,y_2,\dots,y_d)\Big)\\
&= \paraminv'(\param(0,y_2,\ldots,y_d))2\alpha(\param(0,y_2,\ldots,y_d))(\omega\omega^\top)_{11}(0,y_2,\ldots,y_d)\\&\quad-2\paraminv'\left(\param(0,y_2,\ldots,y_d) \right) \alpha\left(\param(0,y_2,\ldots,y_d)\right)(\omega\omega^\top)_{11}(0,y_2,\ldots,y_d)\;=\;0\,.
\end{align*}
Therefore $\tilde \nu$ is continuous on the whole of $\R^d$.

Now the drift coefficient of the SDE for the process $G(X)$ is continuous as
well: $G(X)=\param\circ (\paraminv\circ G\circ \param)\circ \paraminv
(X)$ and compounding with $\param$ and $\paraminv$ preserves continuity 
of the drift since $\param,\paraminv\in C^2$.

The $k$-th coordinate of the transformed drift $\tilde \mu$ has the form 
\begin{align*}
\tilde \mu_k(z)&=G_k'(G^{-1}(z))\mu(G^{-1}(z))+\frac{1}{2}\tr\left(\sigma^\top(G^{-1}(z))G_k''(G^{-1}(z))\sigma(G^{-1}(z))\right)
\end{align*}
and we have just seen that it is continuous in all $z\in \hypsurf$. 
It remains to show that $\tilde \mu$ is intrinsic Lipschitz on 
$\R^d\backslash\hypsurf$. For $z\in\R^d\backslash\hypsurf^c$ we have 
$\tilde \mu(z)=\mu(z)$.  $\mu$ is intrinsic Lipschitz on
$\R^d\backslash\hypsurf$, and therefore also on $\R^d\backslash\hypsurf^c$.

On $\hypsurf^c\backslash \hypsurf$ we have that $G'$ is differentiable with 
bounded derivative and is therefore intrinsic Lipschitz by Lemma
\ref{lem:diff-lip}. $\mu$ is intrinsic Lipschitz on $\R^d\backslash \hypsurf$
by Assumption \ref{ass:mu-ddim-sense} and $\mu$ is bounded on $\hypsurf^c$ by
Assumption \ref{ass:mutilde-lipschitz}, item \ref{it:mutilde-lipschitz2}. Moreover, $G^{-1}$ is Lipschitz on $\R^d$ and thus 
the mapping $x\mapsto G_k'(G^{-1}(z))\mu(G^{-1}(z))$ is intrinsic Lipschitz by Lemma \ref{lem:int-comp}.

In the same way we see that
$G''$ is differentiable with 
bounded derivative on $\hypsurf^c\backslash \hypsurf$ and is therefore intrinsic
Lipschitz by Lemma \ref{lem:diff-lip}. 
$\sigma$ is Lipschitz on $\R^d$ and therefore intrinsic Lipschitz on 
$\hypsurf^c\backslash \hypsurf$. Moreover, both $G''$ and $\sigma$ are bounded on
 $\hypsurf^c\backslash \hypsurf$, thus $z\mapsto
\frac{1}{2}\tr\left(\sigma^\top(G^{-1}(z))G_k''(G^{-1}(z))\sigma(G^{-1}(z))\right)$
is intrinsic Lipschitz by Lemma \ref{lem:int-comp}.

Now $\tilde \mu$ is intrinsic Lipschitz as a sum of intrinsic Lipschitz functions.

Altogether we have shown that $\tilde \mu$ is piecewise Lipschitz and
continuous, and hence Lipschitz by Lemma \ref{th:intrinsic_lip} and Lemma \ref{lem:itconj}.\\

The transformed diffusion coefficient is given by
\begin{align*}
\tilde \sigma(z)&=G'(G^{-1}(z))\sigma(G^{-1}(z))\,.
\end{align*}
Since $G^{-1}$, $G'$ and $\sigma$ are Lipschitz, the mappings 
$z\mapsto G'(G^{-1}(z))$ and $z\mapsto \sigma(G^{-1}(z))$ are Lipschitz.
Moreover, they are both bounded on $\hypsurf^\varepsilon$ (and thus on $\hypsurf^c$), such that their product is Lipschitz.
\end{proof}

\subsection{Main results} 

Finally, we are ready to prove the two main results of this paper.

For this, define
\begin{align}\label{eq:Z-ddim}
 dZ=dG(X)=\tilde \mu(Z) dt + \tilde \sigma(Z) dW\,, \qquad Z_0=G(x)\,,
\end{align}
where $\tilde \mu$ and $\tilde \sigma$ are defined in the proof of Theorem \ref{th:SDElip-surf}.

\begin{theorem}\label{exun-dd}
Let Assumptions \ref{ass:hypersurface0}--\ref{ass:C4n} be satisfied.

Then the $d$-dimensional SDE \eqref{eq:SDE} has a unique global strong solution.
\end{theorem}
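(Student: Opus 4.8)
The plan is to argue exactly as in the one-dimensional Theorem~\ref{exun-1d}, now using the multidimensional transform $G$ from Section~\ref{subsec:Constr-ddim} as the bridge between \eqref{eq:SDE} and an SDE with globally Lipschitz coefficients. \textbf{Existence.} First I would invoke Theorem~\ref{th:SDElip-surf}: under Assumptions~\ref{ass:hypersurface0}--\ref{ass:mutilde-lipschitz} the coefficients $\tilde\mu,\tilde\sigma$ of the transformed equation \eqref{eq:Z-ddim} are Lipschitz, so by It\^o's classical theorem \cite{ito1951} the SDE \eqref{eq:Z-ddim} with $Z_0=G(x)$ has a unique global strong solution $Z$. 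By Theorem~\ref{thm:Ginv}, with $c$ chosen as in Lemma~\ref{lem:c-ddim}, $G$ is a diffeomorphism of $\R^d$, so $G^{-1}$ is well-defined and inherits the regularity of $G$; and by Theorem~\ref{th:ItoGi} It\^o's formula holds for $G^{-1}$ even though $G^{-1}\notin C^2$. Setting $X:=G^{-1}(Z)$ and applying It\^o's formula to $G^{-1}$ then yields a stochastic differential for $X$; substituting $Z=G(X)$, $\tilde\mu(Z)=\tilde\mu(G(X))$, $\tilde\sigma(Z)=\tilde\sigma(G(X))$ and using the defining relations of $\tilde\mu,\tilde\sigma$ from the proof of Theorem~\ref{th:SDElip-surf}, this differential collapses to $dX=\mu(X)\,dt+\sigma(X)\,dW$ with $X_0=G^{-1}(G(x))=x$. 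Since $Z$ lives on all of $[0,\infty)$ and $G^{-1}$ maps $\R^d$ into $\R^d$, $X$ is a global strong solution of \eqref{eq:SDE}.

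\textbf{Uniqueness.} Conversely, if $X$ is any global strong solution of \eqref{eq:SDE}, then --- since It\^o's formula also holds for $G$ by Theorem~\ref{th:ItoGi} --- the process $G(X)$ solves \eqref{eq:Z-ddim} with initial value $G(x)$; by uniqueness for the transformed SDE, $G(X)=Z$, whence $X=G^{-1}(Z)$ is uniquely determined. This gives uniqueness of the solution of \eqref{eq:SDE}.

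The only genuine difficulties here --- showing that the transformed coefficients are globally Lipschitz, that $G$ is a global diffeomorphism, and that It\^o's formula survives the failure of $C^2$-regularity of $G$ and $G^{-1}$ --- have already been settled in Theorems~\ref{th:SDElip-surf}, \ref{thm:Ginv} and~\ref{th:ItoGi}. Given those, the present theorem is a short assembly of the pieces, fully parallel to the one-dimensional argument; the one point that still requires a (routine) verification is that the It\^o differential of $G^{-1}(Z)$ really reproduces the original coefficients $\mu,\sigma$, which holds because this computation is precisely the inverse of the one used to define $\tilde\mu,\tilde\sigma$ in the proof of Theorem~\ref{th:SDElip-surf}.
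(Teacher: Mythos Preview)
Your proposal is correct and follows essentially the same route as the paper: invoke Theorem~\ref{th:SDElip-surf} to get Lipschitz coefficients for \eqref{eq:Z-ddim}, solve that SDE by the classical theorem, use Theorem~\ref{thm:Ginv} and Theorem~\ref{th:ItoGi} to transform back via $X=G^{-1}(Z)$, and conclude. Your explicit uniqueness argument (applying It\^o's formula to $G$ on an arbitrary solution $X$ and invoking uniqueness for $Z$) is a welcome addition that the paper leaves implicit.
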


\begin{proof}
 Since by Theorem \ref{th:SDElip-surf} SDE \eqref{eq:Z-ddim} has Lipschitz coefficients, it follows that it has a unique global strong solution
for the initial value $G(x)$.  Due to Theorem
\ref{thm:Ginv}, the transformation $G$ has a global inverse $G^{-1}$. It\^o's formula holds for $G^{-1}$ by Theorem \ref{th:ItoGi}.  Applying It\^o's
formula to $G^{-1}$, we obtain that $G^{-1}(Z)$ satisfies
\begin{align*}
dX=\mu(X)dt+\sigma(X)dW\,,\quad X_0=x\,.
\end{align*}
Setting $X=G^{-1}(Z)$ closes the proof.
\end{proof}

For calculating the solution to the $d$-dimensional SDE \eqref{eq:SDE}, the same algorithm as for the one-dimensional case works,
if applied using the transformations from the $d$-dimensional case.
Let $Z_T^{(\delta)}$ be the Euler-Maruyama approximation of the solution to SDE \eqref{eq:Z-ddim} with step size smaller than $\delta>0$.
\begin{algorithm}\label{alg:num-ddim}
Go through the following steps:
\begin{enumerate}
 \item Set $Z^{(\delta)}_0=G(x)$.
 \item Apply the Euler-Maruyama method to SDE \eqref{eq:Z-ddim} to obtain $Z_T^{(\delta)}$.
 \item Set $\bar X = G^{-1}\left(Z_T^{(\delta)}\right)$.
\end{enumerate}
\end{algorithm}

\begin{theorem}\label{thm:conv-dd}
Let Assumptions \ref{ass:hypersurface0}--\ref{ass:C4n} be satisfied.

Then Algorithm \ref{alg:num-ddim}
converges with strong order $1/2$ to the solution X of the $d$-dimensional SDE \eqref{eq:SDE}.
\end{theorem}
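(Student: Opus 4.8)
The plan is to follow verbatim the strategy of the one-dimensional Theorem \ref{thm:conv-1d}, transporting the classical strong order $1/2$ error bound for the Euler--Maruyama scheme applied to the transformed SDE \eqref{eq:Z-ddim} back through the inverse transform $G^{-1}$. The starting point is Theorem \ref{exun-dd}: the solution of the original SDE is $X_T=G^{-1}(Z_T)$, where $Z$ is the unique strong solution of \eqref{eq:Z-ddim}, whose coefficients $\tilde\mu,\tilde\sigma$ are globally Lipschitz by Theorem \ref{th:SDElip-surf}. Since the initial datum $Z_0=G(x)$ is deterministic and Lipschitz coefficients entail linear growth, the hypotheses of \cite[Theorem 10.2.2]{kloeden1992} are satisfied, so the Euler--Maruyama approximation $Z^{(\delta)}$ with step size below $\delta$ obeys $\E\big(\|Z_T-Z_T^{(\delta)}\|^2\big)\le C\,\delta$ for all sufficiently small $\delta$, with $C$ depending only on $T$, $x$ and the Lipschitz and growth constants of $\tilde\mu,\tilde\sigma$.

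The second step is to record that $G^{-1}$ is globally Lipschitz. By Theorem \ref{thm:Ginv}, $G$ is a $C^1$ diffeomorphism of $\R^d$. The explicit formula \eqref{eqn:neumannG} for $G'$ gives $G'(x)=\id+\opA(x)$ with $\opA(x)=0$ for $x\notin\hypsurf^c$, and for the admissible choice of $c$ from Lemma \ref{lem:c-ddim} the bound derived there shows $\|\opA(x)\|\le q$ for some $q<1$ uniformly in $x$. Hence $(G^{-1})'(z)=G'(G^{-1}(z))^{-1}$ exists for every $z$ and, by Lemma \ref{th:neumann}, satisfies $\|(G^{-1})'(z)\|\le (1-q)^{-1}$, a bound independent of $z$. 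Therefore $G^{-1}$ has uniformly bounded derivative and is Lipschitz with a constant $L_{G^{-1}}\le (1-q)^{-1}$.

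Combining the two steps: using $X_T=G^{-1}(Z_T)$, $\bar X_T=G^{-1}(Z_T^{(\delta)})$ and the Lipschitz property of $G^{-1}$,
\[
\E\big(\|X_T-\bar X_T\|^2\big)=\E\big(\|G^{-1}(Z_T)-G^{-1}(Z_T^{(\delta)})\|^2\big)\le L_{G^{-1}}^2\,\E\big(\|Z_T-Z_T^{(\delta)}\|^2\big)\le L_{G^{-1}}^2\,C\,\delta
\]
for all sufficiently small $\delta$, which is exactly strong order $1/2$ convergence of Algorithm \ref{alg:num-ddim}.

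I expect the only genuine obstacle to be the clean bookkeeping behind the claim $L_{G^{-1}}<\infty$: one must read off from \eqref{eqn:neumannG} together with Lemma \ref{lem:c-ddim} that $\|\opA(x)\|$ is bounded by a constant strictly below $1$ uniformly over $\R^d$, and note that $G$ (hence $G^{-1}$) coincides with the identity outside the tube $\hypsurf^c$, so the uniform bound on $(G^{-1})'$ is meaningful everywhere. Everything else is a direct transcription of the one-dimensional proof and an invocation of the standard Euler--Maruyama error estimate for SDEs with Lipschitz coefficients.
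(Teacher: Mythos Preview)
Your proof is correct and follows essentially the same route as the paper: the paper's argument is the three-line estimate $\E(\|X_T-\bar X_T\|^2)\le L_{G^{-1}}^2\,\E(\|Z_T-Z_T^{(\delta)}\|^2)\le L_{G^{-1}}^2 C\delta$, invoking \cite[Theorem 10.2.2]{kloeden1992} for the transformed SDE with Lipschitz coefficients. Your additional paragraph justifying $L_{G^{-1}}<\infty$ via the uniform bound $\|G'(x)-\id\|\le q<1$ from Lemma~\ref{lem:c-ddim} and Lemma~\ref{th:neumann} is a welcome elaboration that the paper leaves implicit (it simply asserts that $G^{-1}$ is Lipschitz, relying on the earlier discussion around Theorems~\ref{thm:Ginv} and~\ref{th:SDElip-surf}).
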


\begin{proof}
We estimate the $L^2$-error of the approximation.
For every $T>0$ there is a constant $C$, such that 
 \begin{align*}
\E\left(\left\|X_T- \bar X_T\right\|^2\right)
= \E\left(\left\|G^{-1}\left(Z_T\right)- G^{-1}\left(Z_T^{(\delta)}\right)\right\|^2\right)
\le L_{G^{-1}}^2\E\left(\left\|Z_T- Z_T^{(\delta)}\right\|^2\right)= L_{G^{-1}}^2C \delta
\end{align*}
for every sufficiently small step size $\delta$, where $L_{G^{-1}}$ is the Lipschitz constant of $G^{-1}$.
We used \cite[Theorem 10.2.2]{kloeden1992} for the $L^2$-convergence of order $1/2$
of the Euler-Maruyama scheme for SDEs with Lipschitz coefficients.
\end{proof}


\subsection{Compact set of discontinuities}
\label{subsec:compact}

To be able to prove our main results we had to make a number of assumptions
on the coefficient functions $\mu$ and $\sigma$. At least one of those 
is indispensable for our method to work, that is, 
Assumption \ref{ass:mu-ddim-sense}, which demands
that $\mu$ is piecewise Lipschitz and that its set of discontinuities $\hypsurf$
is a $C^3$ hypersurface.

There are two more assumptions on $\hypsurf$ and several on the behaviour of the 
coefficients close to $\hypsurf$. In this subsection we shall find out which
assumptions are automatically satisfied in the case where $\hypsurf$ is
compact.

For compact $\hypsurf$, Assumption \ref{ass:hypersurface1} is automatically
satisfied. This follows from a
lemma in \cite{foote1984}:

\begin{lemma}
Let $\hypsurf\subseteq\R^d$ be a compact $C^k$ submanifold with $k\ge 2$. 

Then $\hypsurf$ has a neighbourhood $U=\hypsurf^\varepsilon$ with the unique closest point property,
and the projection map $p:U \longrightarrow \hypsurf$ is $C^{k-1}$.
\end{lemma}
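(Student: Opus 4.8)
The plan is to realise $p$ as the inverse of the endpoint map of the normal bundle of $\hypsurf$, restricted to a tube of uniform radius whose existence rests on compactness. Since $\hypsurf$ is a $C^k$ submanifold with $k\ge 2$, in any local $C^k$ parametrisation $\psi$ the tangent spaces $\tang(\xi)$ — spanned by the partial derivatives of $\psi$ — depend on $\xi$ in a $C^{k-1}$ fashion, hence so do their orthogonal complements. It follows that the normal bundle
\[
N\hypsurf:=\{(\xi,v)\in\R^d\times\R^d:\xi\in\hypsurf,\ v\perp\tang(\xi)\}
\]
is a $d$-dimensional $C^{k-1}$ submanifold of $\R^d\times\R^d$, carrying a $C^{k-1}$ bundle projection $\pi:N\hypsurf\to\hypsurf$, $\pi(\xi,v)=\xi$, and a $C^{k-1}$ endpoint map $E:N\hypsurf\to\R^d$, $E(\xi,v)=\xi+v$. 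At a point $(\xi,0)$ of the zero section the differential $dE_{(\xi,0)}$ is the canonical isomorphism $\tang(\xi)\oplus\tang(\xi)^{\perp}\to\R^d$, $(w,u)\mapsto w+u$; so by the inverse function theorem $E$ restricts to a $C^{k-1}$ diffeomorphism from some neighbourhood of $(\xi,0)$ in $N\hypsurf$ onto a neighbourhood of $\xi$ in $\R^d$.

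The next step is to make this uniform. Writing $N^{\delta}:=\{(\xi,v)\in N\hypsurf:\|v\|<\delta\}$, I would show there is $\delta>0$ such that $E|_{N^{\delta}}$ is injective onto an open set. First, a standard compactness argument (if $dE$ were singular at $(\xi_n,v_n)$ with $\|v_n\|\to 0$, pass to a subsequence $\xi_n\to\xi$ and get $dE_{(\xi,0)}$ singular, a contradiction) yields $\delta_1>0$ with $E$ a local diffeomorphism on all of $N^{\delta_1}$. For injectivity, suppose it fails for every $\delta$: then there are $(\xi_n,v_n)\neq(\xi_n',v_n')$ with $\|v_n\|,\|v_n'\|\to 0$ and $\xi_n+v_n=\xi_n'+v_n'$; by compactness pass to subsequences with $\xi_n\to\xi$, $\xi_n'\to\xi'$, and taking limits forces $\xi=\xi'$, so for large $n$ both points lie in a neighbourhood of $(\xi,0)$ on which $E$ is injective, a contradiction. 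Hence $E|_{N^{\delta}}$ is injective for some $0<\delta\le\delta_1$; being a local diffeomorphism it is open, so $V:=E(N^{\delta})$ is open with $V\supseteq E(\hypsurf\times\{0\})=\hypsurf$. Finally, since $\hypsurf$ is compact and $V$ is an open neighbourhood of it, there is $\varepsilon\in(0,\delta]$ with $\hypsurf^{\varepsilon}\subseteq V$.

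It remains to identify $p$ and read off its regularity on $U:=\hypsurf^{\varepsilon}$. Fix $x\in\hypsurf^{\varepsilon}$; by compactness $\xi\mapsto\|x-\xi\|$ attains its minimum over $\hypsurf$ at some $\xi_0$, with $\|x-\xi_0\|=d(x,\hypsurf)<\varepsilon$. The first-order condition at $\xi_0$ gives $x-\xi_0\perp\tang(\xi_0)$, i.e.\ $(\xi_0,x-\xi_0)\in N^{\delta}$ and $E(\xi_0,x-\xi_0)=x$. By injectivity of $E|_{N^{\delta}}$ the pair $(\xi_0,x-\xi_0)$, and in particular $\xi_0$, is uniquely determined by $x$; this is precisely the unique closest point property on $U$, and it identifies $p=\pi\circ(E|_{N^{\delta}})^{-1}$ on $U$. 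Since $\pi$ is $C^{k-1}$ and $(E|_{N^{\delta}})^{-1}$ is $C^{k-1}$ by the inverse function theorem, $p$ is $C^{k-1}$.

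The main obstacle is the second step: passing from local injectivity of $E$ near the zero section — immediate from the inverse function theorem — to injectivity of $E$ on a tube $N^{\delta}$ of fixed radius, since local injectivity does not by itself prevent two normal rays based at far-apart points of $\hypsurf$ from meeting at small distance. This is exactly where compactness of $\hypsurf$ is essential. The remaining ingredients — $C^{k-1}$ regularity of the normal bundle, invertibility of $dE$ on the zero section, the first-order characterisation of nearest points, and extracting the regularity of $p$ — are routine.
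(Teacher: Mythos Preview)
Your argument is correct and is essentially the standard proof of the tubular neighbourhood theorem: realise the nearest-point projection as $\pi\circ E^{-1}$ for the normal-bundle endpoint map, use the inverse function theorem on the zero section, and use compactness to pass from local to uniform injectivity of $E$ on a tube $N^{\delta}$. The steps you flag as delicate (uniform nonsingularity of $dE$ and global injectivity on $N^{\delta}$) are handled correctly by your two compactness/subsequence arguments.

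Note, however, that the paper does \emph{not} give its own proof of this lemma: it is quoted verbatim from \cite{foote1984} and used as a black box to verify that Assumption~\ref{ass:hypersurface1} holds automatically for compact $\hypsurf$. So there is no ``paper's proof'' to compare against; you have simply supplied the argument that the authors outsource to the literature. Your write-up is in fact the classical proof underlying Foote's result.
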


Assumption \ref{ass:sigma-ddim-sense} prescribes a certain geometrical 
relation between $\hypsurf$ and directions of the diffusion coefficient.
This will not be satisfied automatically only from making additional 
assumptions on $\hypsurf$, of course. But for the case of compact $\hypsurf$, 
 Assumption \ref{ass:sigma-ddim-sense} follows easily from weaker requirements
on $\sigma$.

\begin{proposition}\label{th:weaker-thing}
Let $\hypsurf$ be a compact $C^2$ hypersurface and let $\sigma:\R^d\rightarrow \R^{d\times d}$ be Lipschitz. 

If $\sigma(\xi)^\top n(\xi)\ne 0$ for all $\xi\in \hypsurf$, then there exists a
constant $c_0>0$ such that $\|\sigma^\top(\xi)n(\xi)\|\ge c_0$ for all $\xi\in
\hypsurf$.  
\end{proposition}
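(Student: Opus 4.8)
The statement to prove is Proposition~\ref{th:weaker-thing}: if $\hypsurf$ is a compact $C^2$ hypersurface and $\sigma$ is Lipschitz with $\sigma(\xi)^\top n(\xi)\neq 0$ for all $\xi\in\hypsurf$, then $\|\sigma^\top(\xi)n(\xi)\|$ is bounded below by a positive constant on $\hypsurf$.

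\textbf{Proof plan.}

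The plan is to use a standard compactness argument: a continuous positive function on a compact set attains its (positive) minimum. First I would define the function $f:\hypsurf\longrightarrow\R$ by $f(\xi):=\|\sigma^\top(\xi)n(\xi)\|$. The key point is that $f$ is continuous on $\hypsurf$. For this, observe that $\sigma$ is Lipschitz, hence continuous, and that $n$ can be taken to be continuous locally (indeed $C^1$, since $\hypsurf$ is $C^2$); although $n$ is only defined globally up to sign when $\hypsurf$ is non-orientable, the quantity $\|\sigma^\top(\xi)n(\xi)\|$ is independent of the choice of sign of $n(\xi)$, so $f$ is a well-defined continuous function on all of $\hypsurf$. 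Since $\hypsurf$ is compact, $f$ attains its infimum at some point $\xi_0\in\hypsurf$. By hypothesis $\sigma(\xi_0)^\top n(\xi_0)\neq 0$, so $c_0:=f(\xi_0)=\|\sigma^\top(\xi_0)n(\xi_0)\|>0$, and then $\|\sigma^\top(\xi)n(\xi)\|\ge c_0$ for all $\xi\in\hypsurf$, which is the claim.

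\textbf{Main obstacle.} The only subtlety worth spelling out is the well-definedness and continuity of $f$ in the possibly non-orientable case; this is handled exactly as in the Proposition following \eqref{eq:alphad} (invariance under $n\mapsto -n$) together with the local existence of a continuous unit normal on small patches of a $C^2$ hypersurface. Everything else is the elementary extreme value theorem. I would phrase the argument so that it does not require a globally continuous choice of $n$: one can either argue locally on a finite subcover of $\hypsurf$ by patches carrying a continuous normal and take the minimum of the finitely many local minima, or simply note that $f$, being locally continuous and locally well-defined independent of sign, is globally continuous.
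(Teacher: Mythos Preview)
Your proof is correct and is essentially the same compactness argument as the paper's; in fact the paper takes precisely the finite-subcover variant you mention as an alternative (cover $\hypsurf$ by finitely many patches $\subtheta_k$ admitting a continuous normal, get a positive lower bound $c_k$ on each closure, and set $c_0=\min_k c_k$), whereas your primary write-up uses the slightly slicker observation that $\xi\mapsto\|\sigma^\top(\xi)n(\xi)\|$ is globally well-defined and continuous so the extreme value theorem applies directly.
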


\begin{proof}
Let $\subtheta\subseteq \hypsurf$ be a bounded, open, and connected subset with the
property that there exists an orthonormal vector $n$ on $\subtheta$.  
Since 
 $\sigma^\top n$ is continuous on the closure $\overline{\subtheta}$, there
exists $c>0$ such that 
$\|\sigma(\xi)^\top (\xi)\|\ge c$ for all $\xi\in
\subtheta$.  

By compactness, $\hypsurf$ can be covered by finitely many sets 
$\subtheta_1,\dots,\subtheta_n$ with lower bounds $c_1,\dots,c_n$ and
we can take $c_0:=\min(c_1,\dots, c_n)$ for the conclusion to hold.
\end{proof}

Note that $\sigma(\xi)^\top n(\xi)\ne 0$ also follows from 
$\det(\sigma(\xi))\ne 0$. So in particular, regularity of $\sigma$
implies Assumption \ref{ass:sigma-ddim-sense} for compact $\hypsurf$.\\

Finally, consider Assumption \ref{ass:alpha-ddim} which asserts boundedness
of the first three derivatives of the locally defined function $\alpha$ on
$\hypsurf$.
Similar to what we have done in the proof of Proposition \ref{th:weaker-thing},
we can conclude boundedness of the derivatives from their continuity.

Assumption \ref{ass:mutilde-lipschitz}.\ref{it:mutilde-lipschitz2} is also automatically satisfied for compact $\hypsurf$.
 
From Assumption \ref{ass:C4n} one only needs the part that $\hypsurf$ is $C^4$.
Boundedness of the second and third derivative of every unit normal vector
follows from the fact that for a hypersurface a unit normal vector 
on any given connected open set is unique up to a factor of $\pm 1$ and 
from compactness.


\section{Numerical Examples}
\label{sec:Example}

In this section we present concrete examples. We compute the transform $G$ as well as the coefficients $\tilde \mu, \tilde \sigma$ of the transformed SDE to which we apply the Euler-Maruyama scheme.
Furthermore, we examine the quality of the approximation by considering the estimated $L^2$-error.

\paragraph{Discontinuity on the unit circle}

Let $\hypsurf$ be the unit circle in $\R^2$, i.e.~the drift of our SDE is discontinuous only in $\hypsurf=\{x\in \R^2:\|x\|=1\}$.
We want to solve the following SDE numerically:
\begin{align}\label{eq:kreis}
\left(
\begin{array}{c}
 dX\\ dY
\end{array}
\right)
= \mu(X,Y) dt + \sigma(X,Y) dW_t\,, \qquad
\left(
\begin{array}{c}
 X_0\\ Y_0
\end{array}
\right)
=\left(
\begin{array}{c}
 x\\ y
\end{array}
\right)\,,
\end{align}

where
\begin{align*}
 \mu(x,y)=\begin{cases}
(-x,-y)^\top, & x^2+y^2>1\\
(x,0)^\top, & x^2+y^2<1\,,
          \end{cases}
\end{align*}
$\sigma\equiv\idz$, and $W$ is a two-dimensional standard Brownian motion.
Note that the non-parallelity condition, Assumption \ref{ass:sigma-ddim-sense} is 
satisfied with $c_0=1$ ($\sigma$ is even uniformly elliptic).

We have that $p(x,y)=n(x,y)=(\sqrt{x^2+y^2})^{-1} (x,y)^\top$ yielding the transform
\begin{align*}
 G(x,y)=\begin{cases}
         \left(1+ \frac{(\sqrt{x^2+y^2}-1)|\sqrt{x^2+y^2}-1|}{\sqrt{x^2+y^2}}\phi\left(\frac{|1-\sqrt{x^2+y^2}|}{c}\right)\right)  \left(
\begin{array}{c}
 x\\y
\end{array}
\right), & (1+c)^2>x^2+y^2\ge1\\
        \left(1+ \frac{(\sqrt{x^2+y^2}-1)|\sqrt{x^2+y^2}-1|}{2\sqrt{x^2+y^2}}\phi\left(\frac{|1-\sqrt{x^2+y^2}|}{c}\right)\right)  \left(
\begin{array}{c}
 x\\y
\end{array}
\right), & (1-c)^2<x^2+y^2<1\,,\\
        \end{cases}
\end{align*}
and $G=\idz$, if $x^2+y^2 \ge (1+c)^2$, or $x^2+y^2 \le (1-c)^2$, where we have chosen $c=1/2$.\\

Then the drift of the transformed SDE is given by
\begin{align*}
\tilde \mu(G^{-1}(x,y))=
 \begin{cases}
  \nabla G(x,y) (-x,-y)^\top + \frac{1}{2} \tr(G''(x,y)), & (1+c)^2>x^2+y^2\ge1\\
  \nabla G(x,y) (x,0)^\top + \frac{1}{2} \tr(G''(x,y)), & (1-c)^2<x^2+y^2<1\,,
 \end{cases}
\end{align*}
and $\tilde \mu(x,y)=(-x,-y)^\top$, if $x^2+y^2 \ge (1+c)^2$, and
$\tilde \mu(x,y)=(x,0)^\top$, if $x^2+y^2 \le (1-c)^2$.
Furthermore, $\tilde \sigma (G^{-1}(x,y))=\nabla G(x,y)$.
$G^{-1}$ has to be evaluated numerically.\\

Figure \ref{fig:Gkreis} shows the deviation of the first component of $G$ from the identity. Figure \ref{fig:coefftildekreis} shows the first component of $\mu,\tilde \mu$, and $\sigma_{11},\tilde \sigma_{11}$.
All other components look similar.\\

\begin{figure}[ht]
\begin{center}
\includegraphics[scale=0.7]{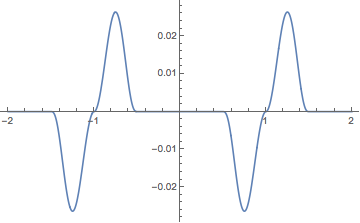}
\caption{The function $(x,y)\mapsto G_1(x,y)-x$.}\label{fig:Gkreis}
\end{center}
\end{figure}
 
\begin{figure}[ht]
\begin{center}
\includegraphics[scale=0.6]{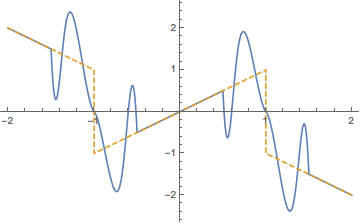}
\includegraphics[scale=0.6]{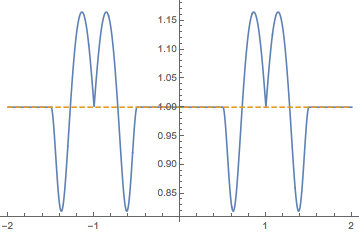}
\caption{The functions $\tilde \mu_1$ and $\tilde \sigma_{11}$ (blue line) and $\mu_1$ and $\sigma_{11}$ (yellow dashed).}\label{fig:coefftildekreis}
\end{center}
\end{figure}

We apply Algorithm \ref{alg:num-ddim} to solve SDE \eqref{eq:kreis}.
Figure \ref{fig:errorkreis} shows the estimated $L^2$-error of the approximation
of our $G$-transformed Euler-Maruyama method (GM), compared to the Euler-Maruyama (EM) scheme:
\begin{align*}
\err_k :=\log_2\left( d\, \sqrt{\hat E\left(\left\|X_T^{(k)} - X_T^{(k-1)}\right\|^2\right)}\right)
\end{align*}
plotted over $\log_2 \delta ^{(k)}$,
where $X_T^{(k)}$ is the numerical approximation with step size
$\delta=\delta^{(k)}$, $\hat E$ is an estimator of the mean value using 4096
paths, and $d$ is a normalizing constant so that $\err_1=\sqrt{1/2}$.\\

\begin{figure}[ht]
\begin{center}
\includegraphics[scale=0.6]{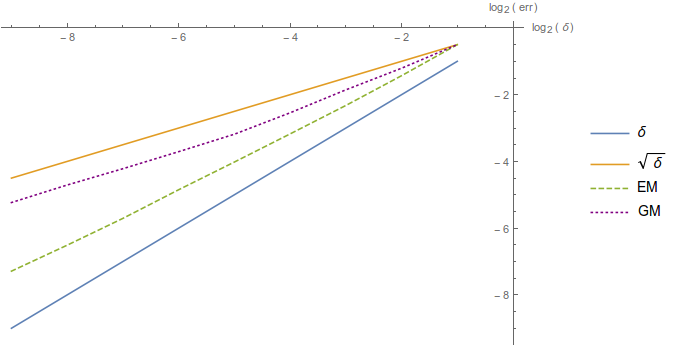}
\caption{The estimated $L^2$-error for the example where $\hypsurf$ is the unit circle.}\label{fig:errorkreis}
\end{center}
\end{figure}

We observe that our $G$-transformed (GM) method converges roughly with order $1/2$, and the crude Euler-Maruyama (EM) method seems to converge even at a higher rate.
Note however that, even though the Euler-Maruyama method is extensively used in practice, it
is not even known whether the method converges strongly for SDEs
of the kind considered here. Especially we cannot conclude whether for even
smaller step-size the
error of the Euler-Maruyama method will still become smaller, will flatten out, 
or whether it will even explode.

\paragraph{Dividend maximization}

In \cite{sz16} the dividend maximization problem from actuarial mathematics, that is, the problem of maximizing the expected discounted future dividend payments until the time of ruin $\tau$ of an insurance company, is studied. In actuarial mathematics, the solution of this optimization problem serves as a risk measure.
The problem is studied in a setup with incomplete information, where the drift of the underlying surplus process of the insurance company from which dividends are paid is driven by an unobservable Markov chain, the states of which represent different phases of the economy; an assumption that makes the model more realistic.
In order to solve the optimization problem, the underlying surplus process has to be replaced by a multidimensional process consisting of filter probabilities of the states of the hidden Markov chain and the surplus written in terms of the filter probabilities.
The resulting system is

\begin{equation}\label{eq:dividends}
\begin{aligned}
dR_t&= (\bar \alpha_t-u_t) \,dt +\beta dW_t\,,\\
d\pi_i(t)&=\left(q_{di} + \sum_{j=1}^{d-1} (q_{ji}-q_{di}) \pi_j (t)\right) \, dt + \pi_i (t) \frac{\alpha_i-\bar \alpha_t}{\beta} \, dW_t\,, \quad  i = 1, \dots, d-1 \,,
\end{aligned}
\end{equation}
where $\bar \alpha_t:=\alpha_d+\sum_{i=1}^{d-1}(\alpha_i-\alpha_d) \pi_i(t)$ and where
$(u_t)_{t\ge 0}\in [0,\bar u]$ is the dividend strategy,
$R=(R_t)_{t\ge 0}$ is the surplus process, and the $(\pi_i(t))_{t\ge 0}$, $i=1,\dots,d-1$, are the conditional probabilities that the underlying hidden Markov chain is in state $e_i$.
$W=(W_t)_{t\ge 0}$ is a one-dimensional Brownian motion.
We assume knowledge of the following constants: $(q_{ij})_{i,j=1}^d$ are the entries of the intensity matrix of the Markov chain, $\beta$ is the diffusion parameter of the surplus and $\alpha_i$, $i=1,\dots,d$, is the drift of the surplus, if the Markov chain is in state $e_i$. 

The application of filtering theory leads to an equivalent optimization problem:
\begin{align}\label{eq:opt-dividends}
\sup_u\E_{x,\pi_1,\dots,\pi_{d-1}}\left( \int_0^\tau e^{-\delta s} u_s \, ds \right)
\end{align}
with discount rate $\delta>0$.
This is studied in \cite{sz16}
and the candidate for the optimal dividend policy is of the form $u^\ast_t=\bar u \, \1_{[b(\bar \alpha_t),\infty)}(R_t)$ with \emph{threshold level} $b$, leading to a discontinuous drift of the surplus process from which the dividends are paid.
Due to the application of filtering theory, the diffusion coefficient is 
not uniformly elliptic.
In order to verify the admissibility of the candidate for the optimal control policy,
existence and uniqueness of the underlying state process has to be proven.
This can be done by applying the result presented herein and we can also simulate the optimally controlled surplus (e.g., to calculate the expected time of ruin).\\

And our results are even further applicable: in \cite{sz16} the optimization problem \eqref{eq:opt-dividends} is solved for $d=2$ by policy iteration in combination with solving an associated partial differential equation.
Doing the same for dimension 4 or higher would not be numerically tractable.
So in higher dimension one needs to solve the problem by combining policy iteration with simulation.\\

Figure \ref{fig:dividends5D-err} shows the estimated $L^2$-error of the approximation of the solution of \eqref{eq:dividends} in dimension 5 with a linear initial threshold level.
In \cite{sz16} for $d=2$ a
threshold level which is a linear interpolation of the constant optimal threshold
levels of the problem under full-information was used as an initial policy for
policy iteration. However, we need not restrict ourselves
to linear threshold levels.

Note that for our example checking whether the non-parallelity condition, Assumption \ref{ass:sigma-ddim-sense} holds (in dependence on the parameter choice) is straight-forward.

We see that in this practical example the convergence order is again roughly $1/2$.\\

\begin{figure}[ht]
\begin{center}
\includegraphics[scale=0.6]{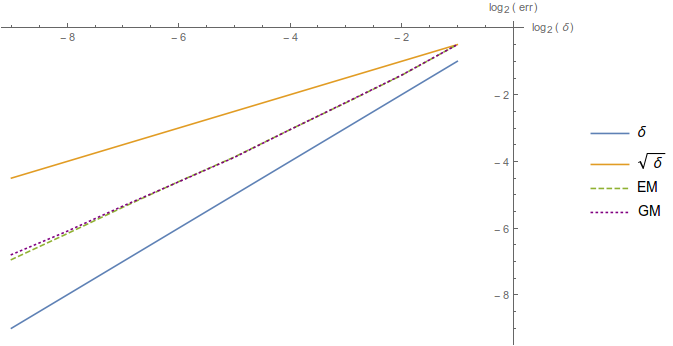}
\caption{The estimated $L^2$-error for the example of dividend maximization.}\label{fig:dividends5D-err}
\end{center}
\end{figure}

Further examples from stochastic control theory, where SDEs with discontinuous (and unbounded) drift and degenerate diffusion coefficient appear are, e.g., \cite{sz12,sz2016a,shardin2016}.
The SDEs appearing there can now be shown to have a unique global strong solution under conditions significantly weaker than known so far, and this solution can be approximated with a numerical method that converges with strong order $1/2$.
As elaborated above our method can be used for approximating solutions to these optimization problems in dimensions greater than 4, where PDE methods become practically infeasible.

\section*{Concluding remarks}

In this paper we have presented an existence and uniqueness result of strong solutions for a very general class of SDEs
with discontinuous drift und degenerate diffusion coefficient; a class of SDEs that frequently appears in applications when studying stochastic optimal control problems.
This is the most general result for such SDEs.
Furthermore, we have derived a numerical algorithm that -- under the same conditions as for the existence and uniqueness result -- is proven to converge and we have established a strong convergence order of $1/2$.
We have applied our algorithm to two examples: one of theoretical interest and one coming from a concrete optimal control problem in actuarial mathematics.


\appendix

\section{Supplementary proofs}

\subsection*{Proof of Lemma \ref{lem:nd_bounded}}

Let $\xi\in \hypsurf$. W.l.o.g. $\xi=0$ and $n(\xi)=e_d$, where $e_d$ is
the $d$-th 
canonical basis
vector of the $\R^d$. 
Thus $\hypsurf$ can locally be parametrized by $\psi:R\longrightarrow \R^d$
of the form $\psi(y_1,\ldots,y_{d-1})=(y_1,\ldots,y_{d-1},\phi(y_1,\ldots,y_{d-1}))^\top$,
where $\phi:R\longrightarrow \R$ is a $C^3$-function with $\phi(0)=0$ and
 $\phi'(0)=0$.  
Hence, for all $y\in R$, 
\begin{align*}
\lambda(y)n(\psi(y))&=\frac{\partial \psi}{\partial y_1}(y)\times\dots\times 
\frac{\partial \psi}{\partial y_{d-1}}(y)\\
&=
\begin{pmatrix}
1\\0\\\vdots\\0\\\frac{\partial \phi}{\partial y_1}(y)
\end{pmatrix}
\times
\dots\times 
\begin{pmatrix}
0\\0\\\vdots\\1\\\frac{\partial \phi}{\partial y_{d-1}}(y)
\end{pmatrix}
=
\begin{pmatrix}
-\frac{\partial \phi}{\partial y_1}(y)\\-\frac{\partial \phi}{\partial y_2}(y)\\\vdots\\-\frac{\partial \phi}{\partial y_{d-1}}(y)\\1
\end{pmatrix}\,,
\end{align*}
with $\lambda(y)=\big\|\frac{\partial \psi}{\partial y_1}(y)\times\dots\times \frac{\partial \psi}{\partial y_{d-1}}(y)\big\|$.
Note that $\lambda$ is a $C^2$-function satisfying 
$\lambda'(0)=0$ and w.l.o.g. the parametrization is chosen such that
 $\lambda(0)=1$.
Hence
\begin{align*}
\left(\lambda\ n\circ \psi\right)'(y)
&=-\begin{pmatrix}
\frac{\partial^2 \phi}{\partial y_1^2}(y)&
\frac{\partial^2 \phi}{\partial y_2\partial y_1}(y)&
\dots&
\frac{\partial^2 \phi}{\partial y_{d-1}\partial y_1}(y)\\
\frac{\partial^2 \phi}{\partial y_1\partial y_2}(y)&
\frac{\partial^2 \phi}{\partial y_2^2}(y)&
\dots&
\frac{\partial^2 \phi}{\partial y_{d-1}\partial y_2}(y)\\
\vdots&\vdots& &\vdots\\
\frac{\partial^2 \phi}{\partial y_1\partial y_{d-1}}(y)&
\frac{\partial^2 \phi}{\partial y_2\partial y_{d-1}}(y)&
\dots&
\frac{\partial^2 \phi}{\partial y_{d-1}^2}(y)\\
0&0&\dots&0
\end{pmatrix}
=
-\begin{pmatrix}
H_\phi (y)\\
0
\end{pmatrix}\,,
\end{align*}
where $H_\phi(y)$ denotes the Hessian of $\phi$ in $y\in  R$.
On the other hand 
$\left(\lambda\ n\circ \psi\right)'=n\circ \psi \ \lambda' 
+\lambda\ (n\circ \psi)'$. In particular,
\(
(n\circ\psi)'(0)=-
\left(H_\phi (0),
0\right)^\top
\)\,.\\

Now choose any $\varepsilon>0$ that is smaller than the reach 
of $\hypsurf$.
Then $\psi(0)$ is the unique closest point on $\hypsurf$ both to 
$\psi(0)+\varepsilon e_d$ and $\psi(0)-\varepsilon e_d$. In other words,
the open balls with centers $\psi(0)+\varepsilon e_d$ 
and $\psi(0)-\varepsilon e_d$ contain no point of $\hypsurf$.
Therefore, 
\[
-\varepsilon(1-\sqrt{1-(\|y\|/\varepsilon)^2})\le\phi(y)\le
\varepsilon(1-\sqrt{1-(\|y\|/\varepsilon)^2})
\]
for all $y\in R$ with $\|y\|\le \varepsilon$, from which we conclude
that\(
-\frac{\|y\|^2}{\varepsilon}\le\phi(y)\le \frac{\|y\|^2}{\varepsilon}\,,
\)
for $\|y\|$ sufficiently small. 
In particular, we have for $j\ne k$ and sufficiently small $|h|$ that
\[
-\frac{2}{\varepsilon}\le \frac{\phi(h(e_j + e_k))-\phi(h(e_j - e_k))}{2 h^2}
\le \frac{2}{\varepsilon}\,.
\] 
By letting $h\to 0$ and applying de l'Hospital's rule twice 
we see that \[
-\frac{2}{\varepsilon}\le \frac{\partial^2\phi}{\partial y_j\partial y_k}(0)
\le \frac{2}{\varepsilon}\,.
\] 
In the same way we conclude from 
\[
-\frac{1}{\varepsilon}\le \frac{\phi(he_j)-\phi(0)+\phi(-he_j)}{2 h^2}
\le \frac{1}{\varepsilon}
\] 
that 
\(
-\frac{1}{\varepsilon}\le \frac{\partial^2\phi}{\partial y_j^2}(0)
\le \frac{1}{\varepsilon}\,.
\) 
Thus 
\[
\|n'(\xi)\|^2=\| H_\phi(0)\|^2\le \sum_{j=1}^{d-1}\sum_{k=1}^{d-1}
\left|\frac{\partial^2\phi}{\partial y_j\partial y_k}(0)\right|^2
\le \sum_{j=1}^{d-1}\sum_{k=1}^{d-1} \frac{4}{\varepsilon^{2} }
=4\left(\frac{d-1}{\varepsilon}\right)^2\,,
\]
i.e.~$\|n'\|$ is bounded by $2\frac{d-1}{\varepsilon}$. 
Since this holds for all $0<\varepsilon<\reach(\hypsurf)$, we have 
$\|n'\|\le 2\frac{d-1}{\reach(\hypsurf)}$.

\subsection*{Proof of Lemma \ref{lem:itconj}}

We prove the claim that a
hypersurface that satisfies  Assumption \ref{ass:hypersurface1} 
has the property that every line segment from $x$ to $y$ can be replaced 
by a continuous curve $\gamma$ from $x$ to $y$ with 
$\ell(\gamma)<\|x-y\|+\eta$ where $\eta>0$ is a given constant.\\

Let from now on $\varepsilon<\varepsilon_0$, where $\varepsilon_0$ is as in Assumption \ref{ass:hypersurface1},
so that in particular for every $x\in \R^d$ with $d(x,\hypsurf)\le \varepsilon$ there is a unique closest point $\proj(x)$
on $\hypsurf$.

Denote by $s$ the line segment from $x$ to $y$ and identify it with 
it's parameter representation $s(t)=x+t(y-x)\|y-x\|^{-1}$.
Let $A:=\{t\in[0,\|y-x\|]:s(t)\in\hypsurf\}$. For any set $S \subseteq \R$
denote by 
$H(S)$ the set of accumulation points of $S$.

\begin{proposition}\label{prop:tangent}
Let $t\in H(A)$. Then $n(s(t))\perp s'(t)$. 
\end{proposition}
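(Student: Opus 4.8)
The plan is to use that $s$ is an \emph{affine} curve, so that all of its secant vectors at $t$ are literally equal to the constant unit vector $s'(t)$, and then to show that this vector must lie in the tangent space $\tang(s(t))$ of $\hypsurf$ at $s(t)$; since $n(s(t))\perp\tang(s(t))$ by the very definition of the orthonormal vector, this yields the claim.

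First I would fix a sequence $t_k\in A\setminus\{t\}$ with $t_k\to t$, which exists because $t\in H(A)$. The points $s(t_k)\in\hypsurf$ accumulate at $s(t)$, which therefore also lies in $\hypsurf$ (a set of positive reach is closed). Because $s(t)=x+t\,(y-x)\|y-x\|^{-1}$ is affine, one has \emph{exactly}
\[
s(t_k)-s(t)=(t_k-t)\,s'(t),
\]
so the difference quotient $\big(s(t_k)-s(t)\big)/(t_k-t)$ equals the fixed unit vector $s'(t)$ for every $k$.

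Next I would pass to a local $C^3$ parametrization $\psi:R\longrightarrow\hypsurf$ near $s(t)$ with $\psi(u_0)=s(t)$; since $\psi$ is a homeomorphism onto a relative neighbourhood of $s(t)$ in $\hypsurf$, we may write $s(t_k)=\psi(u_k)$ with $u_k\to u_0$ for all large $k$. Writing $\psi(u_k)-\psi(u_0)=M_k(u_k-u_0)$ with $M_k:=\int_0^1\psi'\big(u_0+r(u_k-u_0)\big)\,dr\to\psi'(u_0)=:M$, and using that $M$ has full column rank $d-1$ (it is the Jacobian of an immersion), the matrices $M_k$ also have full column rank for large $k$, with left inverses $M_k^{+}$ satisfying $M_k^{+}M_k=\operatorname{id}$ and $M_k^{+}\to M^{+}$. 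Since $(t_k-t)s'(t)=M_k(u_k-u_0)$ lies in the column space of $M_k$, applying $M_k^{+}$ gives
\[
\frac{u_k-u_0}{t_k-t}=M_k^{+}\,\frac{s(t_k)-s(t)}{t_k-t}=M_k^{+}s'(t)\longrightarrow M^{+}s'(t)=:w .
\]
Then $s'(t)=M_k\big((u_k-u_0)/(t_k-t)\big)\to Mw=\psi'(u_0)\,w$, so $s'(t)$ lies in the range of $\psi'(u_0)$, which is precisely $\tang(s(t))$. Hence $n(s(t))\cdot s'(t)=0$, i.e.\ $n(s(t))\perp s'(t)$.

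The only mildly delicate point is the linear-algebra bookkeeping that makes rigorous the slogan ``secants of points on a $C^1$ submanifold converge to a tangent vector'', namely controlling $u_k-u_0$ through the (approximate) left inverse of the Jacobian; everything else is routine. An essentially equivalent route would be to invoke directly the standard fact that any accumulation point of the secant directions $\big(p_k-p\big)/\|p_k-p\|$ of a $C^1$ submanifold $M$ at $p$, with $p_k\in M$ and $p_k\to p$, is a unit tangent vector at $p$, and apply it with $p=s(t)$, $p_k=s(t_k)$, for which the secant directions are just $\pm s'(t)$.
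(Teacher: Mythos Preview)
Your proof is correct, but it takes a genuinely different route from the paper's.

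The paper argues by contradiction using the positive-reach structure directly: assuming $n(s(t))\cdot s'(t)=C>0$, it computes $\|s(t)+\varepsilon n(s(t))-s(t_j)\|^2$ for a sequence $t_j\in A$ with $t_j\downarrow t$ and shows this is eventually strictly less than $\varepsilon^2$. This places $s(t_j)\in\hypsurf$ inside the open ball $B_\varepsilon(s(t)+\varepsilon n(s(t)))$, contradicting the unique closest point property for $s(t)+\varepsilon n(s(t))$. (The case $t_j\uparrow t$ uses the opposite ball.) No local chart or linear algebra is needed; the argument is a three-line computation once the ball exclusion from Assumption~\ref{ass:hypersurface1} is stated.

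Your argument instead establishes directly that $s'(t)\in\tang(s(t))$ via a local parametrization and the integral mean-value identity $\psi(u_k)-\psi(u_0)=M_k(u_k-u_0)$, together with stability of left inverses. This is the standard ``secants converge to tangents'' fact for $C^1$ submanifolds, and it uses positive reach only to know that $\hypsurf$ is closed so that $s(t)\in\hypsurf$. What you gain is generality---your argument would work verbatim for any closed $C^1$ hypersurface without any reach assumption---at the cost of a bit more machinery (the pseudo-inverse bookkeeping). The paper's proof is shorter and more in keeping with the rest of the appendix, where the positive-reach ball exclusion is the recurring tool, but your approach is a perfectly legitimate alternative.
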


\begin{proof}
Suppose this 
was not the case, i.e.~$n(s(t))\cdot s'(t)\ne 0$. W.l.o.g. 
$n(s(t))\cdot s'(t)=C> 0$.
Let $(t_j)_{j\in \N}$ be a sequence in $A$ with $t_j\ne t$, $\lim_j t_j=t$.
W.l.o.g, $t_j>t$ for all $j$, or $t_j<t$ for all $j$.

By Assumption \ref{ass:hypersurface1} we have  
$\left(B_\varepsilon \left(s(t)- \varepsilon  n(s(t))\right)
\cup
B_\varepsilon \left(s(t)+ \varepsilon  n(s(t))\right)\right)
\cap \hypsurf=\emptyset$, where $B_r(z)$ denotes the open ball with midpoint $z$ and radius $r$. 

Suppose $t_j>t$ for all $j$. Then
\begin{align*}\|s(t)+\varepsilon n(s(t))-s(t_j)\|^2
&=\|s(t)-s(t_j)\|^2+2 \varepsilon(s(t)-s(t_j))\cdot n(s(t))+\varepsilon^2 \|n(s(t))\|^2\\
&=\|s(t)-s(t_j)\|^2+2 \varepsilon (t-t_j)s'(t)\cdot n(s(t))+\varepsilon^2\\
&=|t-t_j|^2-2 \varepsilon |t-t_j|C+\varepsilon^2 \\
&=|t-t_j|(|t-t_j|-2 \varepsilon C)+\varepsilon^2 \,,
\end{align*}
and the last expression is smaller than $\varepsilon^2$ for $j$
large enough. Thus we have found a point $\xi$ on $\hypsurf$, namely $\xi=s(t_j)$,
with $\|\xi-(s(t)+\varepsilon n(s(t)))\|<\|s(t)-(s(t)+\varepsilon n(s(t)))\|=\varepsilon $. But this contradicts the fact that 
$s(t)$ is the point on $\hypsurf$ closest to $s(t)+\varepsilon n(s(t))$.

If $t_j<t$ for all $j$, then the same argument carries through with 
$s(t)+\varepsilon n(s(t))$ replaced by
$s(t)-\varepsilon n(s(t))$.  
\end{proof}

Denote the tangent hyperplane on $\hypsurf$ in the point $\xi$
by $\thp(\xi)$, i.e.~$\thp(\xi)=\xi+\tau(\xi)=\{\xi+\vv:\vv\in \tau(\xi)\}$. 

\begin{proposition}\label{prop:segment}
For any $\xi\in \hypsurf$ we can
find $r>0$ such that for any 
$x\in \thp(\xi)$ with $\|x-\xi\|<r$ we have that
the line segment $\overline{x-\varepsilon n(\xi),x+\varepsilon n(\xi)}$ has 
precisely one 
intersection with $\hypsurf$.
\end{proposition}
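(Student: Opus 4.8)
The plan is to combine the local description of $\hypsurf$ near $\xi$ as the graph of a $C^3$ function over its tangent hyperplane $\thp(\xi)$ with the ``rolling ball'' consequence of Assumption \ref{ass:hypersurface1} that was already used in the proof of Proposition \ref{prop:tangent}. The radius $r$ will be fixed only at the very end, small enough to meet finitely many requirements depending on $\xi$, $\varepsilon$, $\reach(\hypsurf)$, the size of the local graph chart at $\xi$, and the modulus of continuity of the unit normal $n$ at $\xi$; throughout I will take $r<\varepsilon<\varepsilon_0$.

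The one technical ingredient I would isolate first is the following ``almost-tangency'' estimate. For every $\eta\in\hypsurf$ the open balls $B_\varepsilon(\eta\pm\varepsilon n(\eta))$ contain no point of $\hypsurf$, since $\eta$ is the unique closest point of $\hypsurf$ to $\eta\pm\varepsilon n(\eta)$ (here $\varepsilon<\varepsilon_0$); expanding $\|z-\eta\mp\varepsilon n(\eta)\|^2\ge\varepsilon^2$ for $z\in\hypsurf$ then gives $|\langle z-\eta,n(\eta)\rangle|\le\|z-\eta\|^2/(2\varepsilon)$ -- the same computation as in the proof of Proposition \ref{prop:tangent}. Everything else is bookkeeping around this inequality.

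For existence of an intersection I would write $\hypsurf$ locally as $\{w+g(w)n(\xi):w\in V\}$ with $g$ of class $C^3$, $g(\xi)=0$, $\nabla g(\xi)=0$, and $V$ a neighbourhood of $\xi$ in $\thp(\xi)$ chosen so small that $|g|\le\varepsilon$ on it. For $x\in\thp(\xi)$ with $\|x-\xi\|<r$, the estimate above with $\eta=\xi$ and $z=x+g(x)n(\xi)$ -- using $x-\xi\perp n(\xi)$ -- gives $g(x)^2-2\varepsilon|g(x)|+r^2\ge 0$, which together with $|g(x)|\le\varepsilon$ and $r<\varepsilon$ forces $|g(x)|\le\varepsilon-\sqrt{\varepsilon^2-r^2}\le r^2/\varepsilon<\varepsilon$; hence $x+g(x)n(\xi)$ lies on $\hypsurf$ and on the segment. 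For uniqueness I would observe that the same inequality, applied without any localisation to an \emph{arbitrary} intersection point $z=x+tn(\xi)\in\hypsurf$ (for which $|t|\le\varepsilon$ automatically), already forces $|t|\le r^2/\varepsilon$, so every intersection point lies within $r\sqrt2$ of $\xi$. If there were two distinct ones, $z_1=x+t_1n(\xi)$ and $z_2=x+t_2n(\xi)$, then $z_2-z_1$ is a nonzero multiple of $n(\xi)$ of length $|t_2-t_1|\le 2r^2/\varepsilon$, and the almost-tangency estimate at $\eta=z_1$ gives $|n(\xi)\cdot n(z_1)|\le\|z_2-z_1\|/(2\varepsilon)\le r^2/\varepsilon^2$; but $\|z_1-\xi\|\le r\sqrt2$, so by continuity of the unit normal $n$ (which is $C^2$ near $\xi$) we have $|n(\xi)\cdot n(z_1)|$ arbitrarily close to $1$ once $r$ is small, a contradiction.

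The main obstacle -- and the reason a purely local argument will not do -- is that the segment $\overline{x-\varepsilon n(\xi),x+\varepsilon n(\xi)}$ has fixed length $2\varepsilon$, which is not small, so it need not remain inside any fixed coordinate chart around $\xi$, and a priori $\hypsurf$ could bend back and re-meet the segment far from $\xi$. Routing the argument through the rolling-ball inequality at $\xi$ is what pins \emph{all} potential intersection points uniformly close to $\xi$ before anything else is done; the second application of the inequality, at $z_1$, together with continuity of $n$, then kills multiplicity. Checking that the finitely many smallness conditions on $r$ can be met simultaneously is routine.
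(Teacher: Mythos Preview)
Your argument is correct. Both you and the paper use the local graph representation over $\thp(\xi)$ for existence and the rolling-ball exclusion $B_\varepsilon(\xi\pm\varepsilon n(\xi))\cap\hypsurf=\emptyset$ to control the long segment, but the uniqueness arguments diverge. The paper splits the segment into a short middle piece $\overline{x-\zeta n(\xi),x+\zeta n(\xi)}$ (covered by the graph chart, hence at most one intersection there) and two outer pieces, showing directly that for $r<\sqrt{\varepsilon^2-(\varepsilon-\zeta)^2}$ each outer piece lies entirely inside one of the balls $B_\varepsilon(\xi\pm\varepsilon n(\xi))$ and therefore misses $\hypsurf$. You instead extract the quantitative consequence $|t|\le r^2/\varepsilon$ of the same rolling-ball inequality to pin every intersection within $r\sqrt{2}$ of $\xi$, and then apply the inequality a second time at $z_1$ together with continuity of $n$ to rule out a second intersection. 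The paper's route is shorter and more geometric; your route has the mild advantage that the uniqueness step does not depend on the graph chart capturing \emph{all} of $\hypsurf$ in a tube over $V$ (a point the paper takes for granted), since your second rolling-ball application works for any two points of $\hypsurf$ without localisation. Note that your rolling-ball computation in the existence paragraph is superfluous---once $|g|\le\varepsilon$ on $V$, existence is immediate---but it does no harm.
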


\begin{proof}
We can locally parametrize $\hypsurf$ by a function on an open environment $V$
of $\xi$ in the tangent hyperplane $\thp(\xi)$. That is, there is an open 
interval $I\subseteq \R$ and a $C^2$-function 
$\hat \psi: V\longrightarrow I$ such that every point $z\in \{\xi+\vv+y n(\xi): \vv\in V, y\in I\}$ can be uniquely
written as $z=\xi+\vv+\hat \psi(\vv)n(\xi)$. 
Since $\xi\in \thp(\xi)$ and thus $\hat\psi(\xi)=0$, we may assume that
$I=(-\zeta,\zeta)$ for some $0<\zeta<\varepsilon$.
Choose some $r$ such that $0<r<\sqrt{\varepsilon^2-(\varepsilon-\zeta)^2}$
and such that for all $x\in \thp(\xi)$ we have $x\in V$ 
whenever $\|x-\xi\|<r$. 

Now if $x\in \thp(\xi)$ with $\|x-\xi\|<r$, then precisely one 
point of $\hypsurf$ lies on the line segment $\overline{x-\zeta n(\xi),x+\zeta n(\xi)}$.
But there is no point of $\hypsurf$ on the line segment
$\overline{x+\zeta n(\xi),x+\varepsilon n(\xi)}$, since this is entirely contained
in the open ball $B_\varepsilon(\xi+\varepsilon n(\xi))$, which by the
unique closest point property for $\xi+\varepsilon n(\xi)$ does not
contain any point of $\hypsurf$.

By the same reasoning $\overline{x-\zeta n(\xi),x-\varepsilon n(\xi)}\cap \hypsurf=\emptyset$.
\end{proof}

\begin{proposition}\label{prop:app3}
Let $\varepsilon_1 < \varepsilon$.
Then for any $y\in \R^d$ 
there exists a point $\hat y\in \R^d$ with $d(\hat y,\hypsurf)\ge \varepsilon_1$ and
$\|y-\hat y\|\le \varepsilon_1$. 
\end{proposition}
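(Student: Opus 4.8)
The plan is to split into two cases according to how far $y$ already is from $\hypsurf$. If $d(y,\hypsurf)\ge\varepsilon_1$ there is nothing to do: one takes $\hat y:=y$, so that $\|y-\hat y\|=0\le\varepsilon_1$ while $d(\hat y,\hypsurf)=d(y,\hypsurf)\ge\varepsilon_1$. The substantial case is $d(y,\hypsurf)<\varepsilon_1$, and the idea is simply to slide $y$ outward along the normal line through its nearest point on $\hypsurf$ until it sits at distance exactly $\varepsilon_1$.

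So assume $d(y,\hypsurf)<\varepsilon_1$. Since $\varepsilon_1<\varepsilon<\varepsilon_0$, Assumption~\ref{ass:hypersurface1} provides a unique nearest point $\proj:=\proj(y)\in\hypsurf$. If $y\neq\proj$ I would set $n:=(y-\proj)/\|y-\proj\|$; because $\proj$ minimises $q\mapsto\|y-q\|$ over $\hypsurf$, the first-order condition for the squared-distance function restricted to the $C^3$ manifold $\hypsurf$ forces $y-\proj\perp\tang(\proj)$, so $n$ is a unit normal of $\hypsurf$ at $\proj$. If instead $y=\proj\in\hypsurf$, let $n$ be either unit normal at $\proj$. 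In both cases put
\[
\hat y:=\proj+\varepsilon_1\,n .
\]
Since $y-\proj=d(y,\hypsurf)\,n$, we get $\hat y-y=(\varepsilon_1-d(y,\hypsurf))\,n$, hence $\|\hat y-y\|=\varepsilon_1-d(y,\hypsurf)\le\varepsilon_1$, which is the first required bound.

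It remains to verify $d(\hat y,\hypsurf)\ge\varepsilon_1$, i.e.~that the open ball $B_{\varepsilon_1}(\hat y)=B_{\varepsilon_1}(\proj+\varepsilon_1 n)$ is disjoint from $\hypsurf$. For this I would use exactly the geometric consequence of Assumption~\ref{ass:hypersurface1} already exploited in the proof of Proposition~\ref{prop:tangent}, namely that $B_{\varepsilon}(\proj+\varepsilon n)\cap\hypsurf=\emptyset$ (as $\proj\in\hypsurf$ and $\varepsilon<\varepsilon_0$), together with the elementary inclusion $B_{\varepsilon_1}(\proj+\varepsilon_1 n)\subseteq B_{\varepsilon}(\proj+\varepsilon n)$: both balls are tangent to the hyperplane $\thp(\proj)$ at $\proj$ from the side of $n$, and the first has the smaller radius $\varepsilon_1<\varepsilon$. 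Concretely, normalising $\proj=0$ and $n=e_d$, the condition $\|z-\varepsilon_1 e_d\|<\varepsilon_1$ reads $\|z\|^2<2\varepsilon_1 z_d$, which (since then $z_d>0$) implies $\|z\|^2<2\varepsilon z_d$, i.e.~$\|z-\varepsilon e_d\|<\varepsilon$. Hence no point of $\hypsurf$ lies within distance $\varepsilon_1$ of $\hat y$, so $d(\hat y,\hypsurf)\ge\varepsilon_1$, finishing the proof.

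This lemma is genuinely easy, so I do not expect a real obstacle; the only step that is not a one-line computation is the stationarity claim $y-\proj\perp\tang(\proj)$, and that is standard. As an alternative to the ball-inclusion argument one could invoke directly that for every $\xi\in\hypsurf$ and every radius $r<\varepsilon_0$ the open ball $B_r(\xi+r\,n(\xi))$ is disjoint from $\hypsurf$ (the supporting-ball characterisation of sets of positive reach), applied with $r=\varepsilon_1$.
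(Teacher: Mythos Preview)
Your proof is correct and follows essentially the same approach as the paper: in the nontrivial case you move $y$ along the normal through its nearest point $\proj$ to the point $\proj+\varepsilon_1 n$, then invoke the unique closest point property to conclude $d(\hat y,\hypsurf)\ge\varepsilon_1$. Your version is in fact slightly more careful than the paper's, since you explicitly cover the edge case $y\in\hypsurf$ (where $n(\proj(y))\cdot(y-\proj(y))=0$, a case the paper's sign dichotomy omits) and you spell out the ball-inclusion argument that the paper compresses into the phrase ``by the unique closest point property.''
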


\begin{proof}
If $d(y,\hypsurf)\ge \varepsilon_1$, then set $\hat y=y$.
Otherwise, there is a unique closest point $p(y)\in \hypsurf$.
Set 
\[
\hat y=\begin{cases}
p(y)+\varepsilon_1 n(p(y)) & \text{if } n(p(y))\cdot (y-p(y))>0\\ 
p(y)-\varepsilon_1 n(p(y)) & \text{if } n(p(y))\cdot (y-p(y))<0\,. 
\end{cases}
\]
Then $\|y-\hat y \| \le \varepsilon_1$ is obvious, and $d(\hat y,\hypsurf) \ge \varepsilon_1$ by the unique closest point property.
\end{proof}

We can now modify the straight line from $x$ to $y$ to get a 
continuous curve, which is not much longer than $\|y-x\|$, but
has only finitely many intersections with $\hypsurf$.\\

For what follows, let $\alpha\in (0,1)$ and for $0<\delta<\varepsilon$ set
$\varepsilon_1=\varepsilon-\sqrt{\varepsilon^2-\delta^2}$. \\

We construct a sequence $(\gamma_k)_{k\in \N_0}$
of continuous curves of finite length which becomes stationary 
after finitely many steps, i.e.~there exists 
$k_0$ such that $\gamma_k=\gamma_{k_0}$ for all $k\ge k_0$.

Furthermore, $\gamma_{k_0}$ will have
only finitely many intersections with $\hypsurf$ and it will be only slightly longer than $\|x-y\|$, see \eqref{eq:length}.

Set $\gamma_0=s$.

\paragraph{Step $1$:} 
{\em If} $H(s\cap \hypsurf)=\emptyset$, then set $\gamma_1=\gamma_0$.

{\em Otherwise} proceed as follows:
According to Proposition \ref{prop:app3} there exists a 
point $\hat y$ with $d(\hat y,\hypsurf)\ge \varepsilon_1$ and
$\|y-\hat y\|\le \varepsilon_1$. 
Define $\gamma_1$ as the concatenation of the lines 
$\overline{x,\hat y}$ and $\overline{\hat y,y}$. We have 
$\ell(\gamma_1)\le \|y-x\|+2\varepsilon_1$, and there is at most one
intersection of $\overline{\hat y, y}$, the second line segment, with $\hypsurf$, due to Assumption \ref{ass:hypersurface1}.
Set $x_1=x$.\\

After step 1 we have constructed a polygonal curve $\gamma_1$ such that 
 $\ell(\gamma_1)\le \|y-x\|+2 \varepsilon_1$. If $\gamma_1$ has infinitely
many intersections with $\hypsurf$, then all but finitely many are
contained in a single 
line segment, $s_1=\overline{x_1,\hat y}$, which satisfies
$\ell(s_1)=\|\hat y-x_1\|=\|\hat y-x\|=\|(y-x)+(\hat y-y)\|\le \|y-x\|+ \varepsilon_1$.\\

Now we enter an iteration procedure.
Suppose that after $k\ge 1$ steps we have constructed 
a polygonal curve $\gamma_k$, with the properties that 
 $\ell(\gamma_k)\le \|y-x\|+2 k \varepsilon_1$, and such that 
either $\gamma_k$ has finitely
many intersections with $\hypsurf$, or all intersections are 
contained in a single 
line segment, $s_k=\overline{x_k,\hat y}$, which satisfies
$\ell(s_k)\le \|y-x\|- (k-1) (\alpha\delta-\varepsilon_1)+ \varepsilon_1$.\\

We construct $\gamma_{k+1}$ from $\gamma_k$ as follows:

\paragraph{Step $k+1$:} 
{\em If} $H(\gamma_k\cap \hypsurf)=\emptyset$, then set $\gamma_{k+1}=\gamma_k$.
 
{\em Otherwise}, $H(\gamma_k\cap \hypsurf)$ is contained in the line segment
$\overline {x_k,\hat y}$. Parametrize this segment by 
$s_k(t)=x_k+t\|\hat y-x_k\|^{-1}(\hat y- x_k)$, $t\in [0,\|\hat y-x_k\|]$ 
and let $H_k=H\left(\{t: s_k(t)\in \hypsurf\}\right)$.

Set $t_k=\min H_k$, and let $n_k=n(s_k(t_k))$. 
If $t_k$ is isolated from the left,
or if $t_k=0$, then set $r_k=0$. Now consider the case where $t_k$ is not isolated from the left.
By Proposition \ref{prop:tangent}, $s_k$ lies in the tangent hyperplane
$\thp(s_k(t_k))=s_k(t_k)+\tau(s_k(t_k))$ and 
we can find a small ball with radius $r_k>0$ such that, for any
$t$ with $|t-t_k|<r_k$, the line
segment $\overline{s_k(t)-\varepsilon_1 n_k,s_k(t)+\varepsilon_1 n_k}$ has
at most one intersection with $\hypsurf$, by Proposition \ref{prop:segment}. \\

Consider the line segment 
$\overline{s_k(t_k-r_k)+\varepsilon_1
n_k,s_k(t_k+\alpha\delta)+\varepsilon_1 n_k}$. 

{\em If} the intersection
of this with the plane through $\hat y$, which is orthogonal to the line
segment, is non-empty, denote the unique intersection point by 
$z_k$.

{\em Then} we construct $\gamma_{k+1}$ as the concatenation of
the following line segments:
\begin{itemize}
\item $\overline {s_k(0),s_k(t_k-r_k)}$, which by definition of $t_k$ and $r_k$
has only finitely many intersections with $\hypsurf$;
\item $\overline {s_k(t_k-r_k),s_k(t_k-r_k)+\varepsilon_1 n_k}$, which has
at most one intersection with $\hypsurf$ by the construction of $r_k$ and Proposition \ref{prop:segment};
\item $\overline {s_k(t_k-r_k)+\varepsilon_1
n_k,z_k}$, which is completely
contained in $B_\varepsilon (s_k(t_k)+\varepsilon n_k)$, which does not contain any point of $\hypsurf$ by the unique
closest point property for $s_k(t_k)+\varepsilon n_k$;
\item $\overline{z_k,\hat y}$, which has no intersection with $\hypsurf$, because
as $\|z_k-\hat y\|= \varepsilon_1$, there is no intersection strictly
between $z_k$ and $\hat y$, and $z_k$ lies in the closure of $B_{\varepsilon_1}(\hat y)$ (this is where we need Step 1);
\item $\overline{\hat y,y}$.
\end{itemize}

In this case the curve $\gamma_{k+1}$ has only finitely many intersections with
$\hypsurf$ and $\ell(\gamma_{k+1})= \ell(\gamma_k)+2 \varepsilon_1\le \|y-x\|+2(k+1)
\varepsilon_1$.\\

{\em Otherwise}, set $x_{k+1}=s_k(t_k+\alpha\delta)+\varepsilon_1 n_k$, and construct $\gamma_{k+1}$ as the concatenation of
the following line segments:
\begin{itemize}
\item $\overline {s_k(0),s_k(t_k-r_k)}$, which by definition of $t_k$ and $r_k$
has only finitely intersections with $\hypsurf$;
\item $\overline {s_k(t_k-r_k),s_k(t_k-r_k)+\varepsilon_1 n_k}$, which has
at most one intersection with $\hypsurf$ 
by the construction of $r_k$ and Proposition \ref{prop:segment};
\item $\overline {s_k(t_k-r_k)+\varepsilon_1
n_k,x_{k+1}}$, which is completely
contained in $B_\varepsilon (s_k(t_k)+\varepsilon n_k)$, which does not contain any point of $\hypsurf$ by the unique
closest point property for $s_k(t_k)+\varepsilon n_k$;
\item $s_{k+1}:=\overline {x_{k+1},\hat y}$, which still may have infinitely many intersections with $\hypsurf$;
\item $\overline{\hat y,y}$.
\end{itemize}
Again we have that $\ell(\gamma_{k+1})\le \ell(\gamma_k)+2 \varepsilon_1\le 
\|y-x\|+2 (k+1) \varepsilon_1$. Note that 
\begin{align*}
\ell(s_{k+1})^2&=\|x_{k+1}-\hat y\|^2
=\|s_k(t_k+\alpha\delta)+\varepsilon_1 n_k-\hat y\|^2\\
&= \|s_k(t_k+\alpha\delta)-\hat y\|^2+\varepsilon_1^2
=(\|s_k(t_k)-\hat y\|-\alpha\delta)^2+ \varepsilon_1^2 \,.
\end{align*}
In particular, 
$\|x_{k+1}-\hat y\|
\le \big|\|s_k(t_k)-\hat y\|-\alpha\delta\big|+\varepsilon_1
= \|s_k(t_k)-\hat y\|-\alpha\delta+\varepsilon_1$.
Note that $\|s_k(t_k)-\hat y\|-\alpha\delta\ge 0$, 
since otherwise the line segment 
$\overline{s_k(t_k-r_k)+\varepsilon_1
n_k,s_k(t_k+\alpha\delta)+\varepsilon_1 n_k}$ 
would intersect the hyperplane orthogonal to $s_k$ and passing 
through $\hat y$.

Thus $\|x_{k+1}-\hat y\|\le \|x_k-\hat y\|-\alpha \delta +\varepsilon_1
\le \|x- y\|-k(\alpha\delta-\varepsilon_1)+\varepsilon_1$.\\

After step $k+1$ we have constructed a polygonal curve 
$\gamma_{k+1}$ such that 
 $\ell(\gamma_{k+1})\le \|y-x\|+2(k+1)\varepsilon_1$. If $\gamma_{k+1}$ 
has infinitely
many intersections with $\hypsurf$, then all but finitely many are 
contained in a single 
line segment, $s_{k+1}=\overline{x_{k+1},\hat y}$, and $\ell(s_{k+1})\le 
\|x- y\|-k(\alpha \delta-\varepsilon_1) +\varepsilon_1$.\\

So finally we have constructed a sequence $(\gamma_k)_{k\in \N_0}$ with 
\vspace{-.5em}
\begin{itemize}\setlength\itemsep{0em}
\item[-] 
$\ell(\gamma_k)\le \|x-y\|+2 k\varepsilon_1$; 
\item[-] $\gamma_k$ either has only finitely many intersections with $\hypsurf$,
or all but finitely many intersections are contained in a segment of 
length at most 
$\|x-y\|-(k-1)(\alpha \delta -\varepsilon_1)+\varepsilon_1$.
\end{itemize}

Since $\delta<\varepsilon$, we have that
$\varepsilon_1=\varepsilon-\sqrt{\varepsilon^2-\delta^2}
=\varepsilon\left(1-\sqrt{1-\left(\frac{\delta}{\varepsilon}\right)^2}\right)
<\frac{\delta^2}{\varepsilon}$, such that 
\begin{equation*}
\alpha\delta-\varepsilon_1
>\delta\left(\alpha-\frac{\delta}{\varepsilon}\right)> 0\,.
\end{equation*}

With this, and since 
$\|x-y\|-(k-1)(\alpha \delta -\varepsilon_1)+\varepsilon_1\ge \ell(s_k)\ge 0$,
the iteration can have at most
\[
k\le1+\frac{\|x-y\|+\varepsilon_1}{2(\alpha \delta -\varepsilon_1)}
<1+\frac{\|x- y\|+\varepsilon_1}{2\delta\left(\alpha-\frac{\delta}{\varepsilon}\right)}
<1+\frac{\|x- y\|+\varepsilon}{2\delta\left(\alpha-\frac{\delta}{\varepsilon}\right)}
\]
steps before the sequence becomes stationary, and 
thus there exists a $k_0$ such that $\gamma_{k_0}$ has at most
finitely many intersections with $\hypsurf$.

For the length of $\gamma_k$ for $k\ge k_0$ we have
\begin{align}\label{eq:length}
\ell(\gamma_k)&\le \|x-y\|+2k\varepsilon_1 
\le \|x-y\|
+\left(2\delta+\frac{\|x-y\|+\varepsilon}{\alpha  -\frac{\delta}{\varepsilon}}\right)
\frac{\delta}{\varepsilon}\,.
\end{align}
This can be made as close to $\|x-y\|$ as we desire by making $\delta$ small. Thus the proof is finished.


\section*{Acknowledgements}

The authors thank Bert J\"uttler for valuable discussions regarding 
questions related to differential geometry and Thomas M\"uller-Gronbach for pointing out an inaccuracy in the definition of $\alpha$.

G. Leobacher is supported by the Austrian Science Fund (FWF): Project F5508-N26, which is part of the Special Research Program "Quasi-Monte Carlo Methods: Theory and Applications".
This paper was written while G. Leobacher was member of the Department of Financial Mathematics and Applied Number Theory, Johannes Kepler University Linz, 4040 Linz, Austria.

M. Sz\"olgyenyi is supported by the Vienna Science and Technology Fund (WWTF): Project MA14-031.
A part of this paper was written while M. Sz\"olgyenyi was member of the Department of Financial Mathematics and Applied Number Theory, Johannes Kepler University Linz, 4040 Linz, Austria.
During this time, M. Sz\"olgyenyi was supported by the Austrian Science Fund (FWF): Project F5508-N26, which is part of the Special Research Program "Quasi-Monte Carlo Methods: Theory and Applications".



\vspace{2em}
\centerline{\underline{\hspace*{16cm}}}

\noindent G. Leobacher \\
Institute of Mathematics and Scientific Computing, University of Graz, Heinrichstra\ss{}e 36, 8010 Graz, Austria\\
gunther.leobacher@uni-graz.at\\

\noindent M. Sz\"olgyenyi \Letter \\
Institute of Statistics and Mathematics, Vienna University of Economics and Business, Welthandelsplatz 1, 1020 Vienna, Austria\\
michaela.szoelgyenyi@wu.ac.at



\begin{thebibliography}{26}
\providecommand{\natexlab}[1]{#1}
\providecommand{\url}[1]{\texttt{#1}}
\expandafter\ifx\csname urlstyle\endcsname\relax
  \providecommand{\doi}[1]{doi: #1}\else
  \providecommand{\doi}{doi: \begingroup \urlstyle{rm}\Url}\fi

\bibitem[Berkaoui(2004)]{berkaoui2004}
A.~Berkaoui.
\newblock {{E}uler Scheme for Solutions of Stochastic Differential Equations
  with Non-{L}ipschitz Coefficients}.
\newblock \emph{Portugaliae Mathematica}, 61\penalty0 (4):\penalty0 461--478,
  2004.

\bibitem[Dudek and Holly(1994)]{dudekholly}
E.~Dudek and K.~Holly.
\newblock Nonlinear orthogonal projection.
\newblock \emph{Annales Polonici Mathematici}, 59\penalty0 (1):\penalty0 1--31,
  1994.

\bibitem[\'Etor\'e and Martinez(2013)]{etore2013}
P.~\'Etor\'e and M.~Martinez.
\newblock {Exact Simulation for Solutions of One-Dimensional Stochastic
  Differential Equations Involving a Local Time at Zero of the Unknown
  Process}.
\newblock \emph{Monte Carlo Methods and Applications}, 19\penalty0
  (1):\penalty0 41--71, 2013.

\bibitem[\'Etor\'e and Martinez(2014)]{etore2014}
P.~\'Etor\'e and M.~Martinez.
\newblock {Exact Simulation for Solutions of One-Dimensional Stochastic
  Differential Equations with Discontinuous Drift}.
\newblock \emph{ESAIM: Probability and Statistics}, 18:\penalty0 686--702,
  2014.

\bibitem[Foote(1984)]{foote1984}
R.~L. Foote.
\newblock {Regularity of the Distance Function}.
\newblock \emph{Proceedings of the American Mathematical Society}, 92\penalty0
  (1):\penalty0 153--155, 1984.

\bibitem[Gy\"ongy(1998)]{gyongy1998}
I.~Gy\"ongy.
\newblock {A Note on {E}uler's Approximation}.
\newblock \emph{Potential Analysis}, 8:\penalty0 205--216, 1998.

\bibitem[Halidias and Kloeden(2008)]{halidias2008}
N.~Halidias and P.~E. Kloeden.
\newblock {A Note on the {E}uler–{M}aruyama Scheme for Stochastic
  Differential Equations with a Discontinuous Monotone Drift Coefficient}.
\newblock \emph{BIT Numerical Mathematics}, 48\penalty0 (1):\penalty0 51--59,
  2008.

\bibitem[Hutzenthaler et~al.(2012)Hutzenthaler, Jentzen, and
  Kloeden]{hutzenthaler2012}
M.~Hutzenthaler, A.~Jentzen, and P.~E. Kloeden.
\newblock {Strong Convergence of an Explicit Numerical Method for SDEs with
  Nonglobally {L}ipschitz Continuous Coefficients}.
\newblock \emph{The Annals of Applied Probability}, 22\penalty0 (4):\penalty0
  1611--1641, 2012.

\bibitem[It\^o(1951)]{ito1951}
K.~It\^o.
\newblock {On Stochastic Differential Equations}.
\newblock \emph{Memoirs of the American Mathematical Society}, 4:\penalty0
  1--57, 1951.

\bibitem[Karatzas and Shreve(1991)]{karatzas1991}
I.~Karatzas and S.~E. Shreve.
\newblock \emph{{{B}rownian Motion and Stochastic Calculus}}.
\newblock Graduate Texts in Mathematics. Springer-Verlag, New York, second
  edition, 1991.

\bibitem[Kloeden and Platen(1992)]{kloeden1992}
P.~E. Kloeden and E.~Platen.
\newblock \emph{{Numerical Solutions of Stochastic Differential Equations}}.
\newblock Stochastic Modelling and Applied Probability. Springer Verlag,
  Berlin-Heidelberg, 1992.

\bibitem[Kohatsu-Higa et~al.(2013)Kohatsu-Higa, Lejay, and Yasuda]{kohatsu2013}
A.~Kohatsu-Higa, A.~Lejay, and K.~Yasuda.
\newblock {Weak Approximation Errors for Stochastic Differential Equations with
  Non-Regular Drift}.
\newblock 2013.
\newblock Preprint, Inria, hal-00840211.

\bibitem[Krantz and Parks(1981)]{krantz1981}
S.~G. Krantz and H.~R. Parks.
\newblock {Distance to $C^k$ Hypersurfaces}.
\newblock \emph{Journal of Differential Equations}, 40:\penalty0 116--120,
  1981.

\bibitem[Leobacher and Steinicke(2018)]{leostein2018}
G.~Leobacher and A.~Steinicke.
\newblock Existence, uniqueness and regularity of the projection onto
  differentiable manifolds.
\newblock 2018.
\newblock arXiv:1811.10578.

\bibitem[Leobacher and Sz\"olgyenyi(2016)]{sz15}
G.~Leobacher and M.~Sz\"olgyenyi.
\newblock {A Numerical Method for SDEs with Discontinuous Drift}.
\newblock \emph{BIT Numerical Mathematics}, 56\penalty0 (1):\penalty0 151--162,
  2016.

\bibitem[Leobacher et~al.(2014)Leobacher, Sz\"olgyenyi, and Thonhauser]{sz12}
G.~Leobacher, M.~Sz\"olgyenyi, and S.~Thonhauser.
\newblock {{B}ayesian Dividend Optimization and Finite Time Ruin
  Probabilities}.
\newblock \emph{Stochastic Models}, 30\penalty0 (2):\penalty0 216--249, 2014.

\bibitem[Leobacher et~al.(2015)Leobacher, Sz\"olgyenyi, and Thonhauser]{sz14}
G.~Leobacher, M.~Sz\"olgyenyi, and S.~Thonhauser.
\newblock {On the Existence of Solutions of a Class of {SDE}s with
  Discontinuous Drift and Singular Diffusion}.
\newblock \emph{Electronic Communications in Probability}, 20\penalty0
  (6):\penalty0 1--14, 2015.

\bibitem[Ruzhansky and Sugimoto(2015)]{ruzhansky2015}
M.~Ruzhansky and M.~Sugimoto.
\newblock {On Global Inversion of Homogeneous Maps}.
\newblock \emph{Bulletin of Mathematical Sciences}, 5\penalty0 (1):\penalty0
  13--18, 2015.

\bibitem[Sabanis(2013)]{sabanis2013}
S.~Sabanis.
\newblock {A Note on Tamed Euler Approximations}.
\newblock \emph{Electronic Communications in Probability}, 18\penalty0
  (47):\penalty0 1--10, 2013.

\bibitem[Shardin and Sz\"olgyenyi(2016)]{sz2016a}
A.~A. Shardin and M.~Sz\"olgyenyi.
\newblock {Optimal Control of an Energy Storage Facility Under a Changing
  Economic Environment and Partial Information}.
\newblock \emph{International Journal of Theoretical and Applied Finance},
  19\penalty0 (4):\penalty0 1--27, 2016.

\bibitem[Shardin and Wunderlich(2017)]{shardin2016}
A.~A. Shardin and R.~Wunderlich.
\newblock {Partially Observable Stochastic Optimal Control Problems for an
  Energy Storage}.
\newblock \emph{Stochastics}, 89\penalty0 (1):\penalty0 280--310, 2017.

\bibitem[Sz\"olgyenyi(2016)]{sz16}
M.~Sz\"olgyenyi.
\newblock {Dividend Maximization in a Hidden {M}arkov Switching Model}.
\newblock \emph{Statistics \& Risk Modeling}, 32\penalty0 (3-4):\penalty0
  143--158, 2016.

\bibitem[Veretennikov(1981)]{veretennikov1981}
A.~YU. Veretennikov.
\newblock {On Strong Solutions and Explicit Formulas for Solutions of
  Stochastic Integral Equations}.
\newblock \emph{Mathematics of the USSR Sbornik}, 39\penalty0 (3):\penalty0
  387--403, 1981.

\bibitem[Veretennikov(1982)]{veretennikov1982}
A.~YU. Veretennikov.
\newblock {On the Criteria for Existence of a Strong Solution of a Stochastic
  Equation}.
\newblock \emph{Theory of Probability and its Applications}, 27\penalty0 (3),
  1982.

\bibitem[Veretennikov(1984)]{veretennikov1984}
A.~YU. Veretennikov.
\newblock {On Stochastic Equations with Degenerate Diffusion with Respect to
  Some of the Variables}.
\newblock \emph{Mathematics of the USSR Izvestiya}, 22\penalty0 (1):\penalty0
  173--180, 1984.

\bibitem[Zvonkin(1974)]{zvonkin1974}
A.~K. Zvonkin.
\newblock {A Transformation of the Phase Space of a Diffusion Process that
  Removes the Drift}.
\newblock \emph{Mathematics of the USSR Sbornik}, 22\penalty0 (129):\penalty0
  129--149, 1974.

\end{thebibliography}
\end{document}